\documentclass[reqno,12pt]{amsart}
\usepackage{amsmath,amssymb,latexsym,soul,cite,mathrsfs}
\usepackage{blindtext}
\usepackage{enumitem}
\pretolerance=10000
\usepackage[utf8]{inputenc}
\usepackage{amsthm,amsfonts}
\usepackage{comment}
\usepackage[mathcal]{eucal}
\usepackage{latexsym}
\usepackage[utf8]{inputenc}
\usepackage{bm}
\usepackage[all]{xy}
\usepackage{xcolor}
\usepackage{graphicx}
\usepackage{hyperref}

\marginparwidth 0cm \oddsidemargin 0cm \evensidemargin 0cm
\topmargin 0pt \textheight 220mm \textwidth 160mm

\newtheorem{proposition}{Proposition}

\newtheorem{theorem}{Theorem}

\newtheorem*{definition}{Definition}

\newtheorem{theo}{Theorem}[section]

\newtheorem{remark}{Remark}[section]

\newtheorem{lemma}{Lemma}[section]

\newtheorem{prop}{Proposition}[section]

\newtheorem{cor}{Corollary}[section]

\numberwithin{equation}{section}


\title[TRUDINGER-MOSER EMBEDDING ON WEIGHTED SOBOLEV SPACES]{TRUDINGER-MOSER EMBEDDINGS ON WEIGHTED SOBOLEV SPACES on UNBOUNDED DOMAINS}

\author[J.M.\ do \'O]{Jo\~ao Marcos do \'O}
\author[G. Lu]{Guozhen Lu}
\author[R.~Ponciano]{Raon\'{\i} Ponciano}

\address[Jo\~{a}o Marcos do \'O]{Dep. Mathematics,
	Federal University of Para\'{\i}ba
	\newline\indent
	58051-900, Jo\~ao Pessoa-PB, Brazil}
\email{\href{mailto:jmbo@pq.cnpq.br}{jmbo@pq.cnpq.br}}

\address[Guozhen Lu]{Dep. Mathematics, University of Connecticut
	\newline\indent
	06269, Storrs-CT, United States of America}
\email{\href{mailto:guozhen.lu@uconn.edu}{guozhen.lu@uconn.edu}}

\address[Raon\'{\i}~Ponciano]{Dep. Mathematics,
	Federal University of Para\'{\i}ba
	\newline\indent
	58051-900, Jo\~ao Pessoa-PB, Brazil}
\email{\href{mailto:raoni.cabral.ponciano@academico.ufpb.br}{raoni.cabral.ponciano@academico.ufpb.br}}

\thanks{This work was partially supported by Conselho Nacional de Desenvolvimento Cient\'ifico e Tecnol\'ogico (CNPq) \#312340/2021-4 and \#429285/2016-7, Funda\c c\~ao de Apoio \`a Pesquisa do Estado da Para\'iba (FAPESQ) \#2020/07566-3, Coordena\c c\~ao de Aperfei\c coamento de Pessoal de N\'ivel Superior (CAPES) \#88887.633572/2021-00 and Simons collaboration grants 519099 and 957892 and Simons Fellowship from Simons foundation}

\subjclass[2000]{35J20, 35J25, 35J50}

\keywords{Sobolev spaces; Trudinger-Moser type inequality; Differential Equations; Fractional Dimensions; Maximizers}

\begin{document}

\begin{abstract}
We establish embeddings on a class of Sobolev spaces with potential weights on unbounded domains. Our results provide embeddings into weighted Lebesgue spaces $L^q_\theta$ with radial power weights and establish the existence and non-existence of the maximizers for their Trudinger-Moser type inequalities. We also sharpen the maximal integrability by ``removing" necessary terms from the exponential series while maintaining the continuity of the embedding.
\end{abstract}

\maketitle

\section{Introduction}
Recently, the weighted Sobolev space $X^{k,p}_R$ has been extensively studied due to its applicability in radial elliptic problems for operators in great generality. For more details, see \cite{MR1982932,MR1422009,MR4112674,MR3209335,MR3575914,JMBO,JMBO2,Paper1,MR1069756,MR3670473,MR3957979,MR1929156,MR2166492} and references therein. In \cite{MR1422009}, the authors study a  Br\'{e}zis-Nirenberg type problem for a class of quasilinear elliptic operator of the form  $Lu=-r^{-\gamma}(r^\alpha|u'|^\beta u')'$ when considered as acting on radial functions defined on the ball centered at the origin with radius $R$. According to the choice of parameters, the following operators are included in the class:
\begin{enumerate}
    \item[(i)] $L$ is the Laplacian for $\alpha=\gamma=N-1$ and $\beta=0$;
    \item[(ii)] $L$ is the $p$-Laplacian for $\alpha=\gamma=N-1$ and $\beta=p-2$;
    \item[(ii)] $L$ is the $k$-Hessian for $\alpha=N-k$, $\gamma=N-1$ and $\beta=k-1$.
\end{enumerate}
The suitable space to work on problems like
\begin{equation*}
\left\{\begin{array}{ll}
Lu=f(r,u)\mbox{ in }(0,R)&  \\
u'(0)=u(R)=0,&\\
u>0\mbox{ in }(0,R).
\end{array}\right.
\end{equation*}
is the weighted Sobolev space $X^{1,p}_{0,R}$ to be defined below.

Let $0<R\leq\infty$ and $AC(0,R)$ be the space of all absolutely continuous functions on interval $(0,R)$. It is well known that $u\in AC(0,R)$ if and only if $u$ has a derivative $u'$ almost everywhere, which is Lebesgue integrable and $u(r)=u(a)+\int_a^r u'(s) \mathrm ds$. A function $u$ is said to be locally absolutely continuous on $(0,R)$ if for every $r\in (0,R)$ there exists a neighborhood $V_r$ of $r$ such that $u$ is absolutely continuous on $V_r$. Let $AC_{\mathrm{loc}}(0,R)$ be the space of all locally absolutely continuous functions on interval $(0,R)$.

For each non-negative integer $\ell$, let $AC_{\mathrm{loc}}^\ell(0,R)$ be the set of all functions $u\colon(0,R)\to \mathbb R$ such that $u^{(\ell)}\in AC_{\mathrm{loc}}(0,R)$, where $u^{(\ell)}=\mathrm d^\ell u/\mathrm dr^\ell$. For $p\geq1$ and $\alpha$ real numbers, we denote by $L^p_\alpha=L^p_\alpha(0,R)$ the weighted Lebesgue
 space  of measurable functions $u\colon(0,R)\to\mathbb R$ such that
\begin{equation*}
\|u\|_{L^p_\alpha}=\left(\int_0^R|u|^pr^\alpha\mathrm dr\right)^{1/p}<\infty,
\end{equation*}
which is a Banach space under the standard norm $\|u\|_{L_\alpha^p}$.

The borderline case of the Sobolev embeddings is well-known in the literature as Trudinger-Moser type inequalities which were discovered by J. Peetre \cite{MR0221282}, S. Pohozaev \cite{MR0192184}, N. Trudinger \cite{MR0216286} and V. Yudovich \cite{MR0140822}. Essentially they proved that for some small positive $\alpha$ the Sobolev space $W^{1,N}_0(\Omega)$ is continuously embedding into the Orlicz space $L_{\psi_N}(\Omega)$, where $\Omega$ is a smooth bounded domain of $\mathbb R^N$ and $\psi_N(t)=\exp(\alpha |t|^{\frac{N}{N-1}})$. J. Moser \cite{MR0301504} sharpened this result. Using a symmetrization argument he was able to find the optimal $\alpha=\alpha_N=N\omega_{N-1}^{\frac{1}{N-1}}$, where $\omega_{N-1}$ is the $(N-1)$-dimensional surface of the unit sphere. Trudinger-Moser inequality in unbounded domains of $\mathbb{R}^N (N \geq 2)$ was established in \cite{MR1163431,MR2109256} for the case $N=2$ and for $N \geq 2$ see \cite{MR1377667,MR1704875,MR2400264,MR1646323}. In other settings such as the Heisenberg group where the P\'olya-Szeg\"{o} inequality does not work, Trudinger-Moser inequalities on both bounded and unbounded domains
can be proved by using a symmetrization-free argument. See \cite{CohnLu, LamLu3, LamLuTang-NA, LiLuZhu-CVPDE}. Moreover, such a symmetrization-free argument also offers  alternative proofs for both critical and subcritical  Trudinger-Moser inequalities in the Euclidean spaces. (See also \cite{LLZ-ANS}).

The existence of solutions to nonlinear elliptic equations involving subcritical and critical growth of the Trudinger-Moser type has been studied extensively in recent years, motivated by its applicability in many fields of modern mathematics; see \cite{MR2772124} for a survey on this subject. We also mention the articles  \cite{CLZ-AIM, CLZ-CVPDE, MR1704875,Paper1,MR3670473,MR1163431,MR1377667,MR1865413,MR2863858,MR1392090,MR2488689, LLZ-AIM, LuZhu-JDE} and therein.

For the first order  weighted Sobolev space, the authors in \cite{MR3209335} study Trudinger-Moser type inequalities for weighted Sobolev spaces on finite intervals $(0,R)$, these estimates can be interpreted as Trudinger-Moser type inequalities in fractional dimensions. See also \cite{MR3299177} where the same authors establish a sharp form of the Trudinger-Moser type inequality for weighted Sobolev spaces.

For any positive integer $k$ and $(\alpha_0,\ldots,\alpha_k)\in \mathbb R^{k+1}$, with $\alpha_j>-1$ for $j=0,1,\ldots,k$,   the authors considered in \cite{MR4112674} the weighted Sobolev spaces for higher-order derivatives $X^{k,p}_{0,R}=X^{k,p}_{0,R}(\alpha_0,\ldots,\alpha_k)$ given by all functions $u\in AC^{k-1}_{\mathrm{loc}}(0,R)$ such that $u^{(j)}\in L^p_{\alpha_j}$ for all $j=0,\ldots,k$ and satisfying the boundary condition
\begin{equation*}
\lim_{r\to R}u^{(j)}(r)=0,\quad j=0,\ldots,k-1.
\end{equation*}

Recently D. de Figueiredo et al \cite{MR2838041} obtained a new class on Sobolev embeddings in the spaces of radial functions and applied these embeddings to prove the existence of radial solutions for a biharmonic equation of the H\'{e}non type under both Dirichlet and Navier boundary conditions. Motivated by this work the authors in \cite{Paper1} considered the weighted Sobolev spaces for higher-order derivatives without boundary conditions:
\begin{equation*}
X_R^{k,p}\!=\!X_R^{k,p}(\alpha_0,\ldots,\alpha_k)\!=\!\{u\colon(0,R)\to\mathbb R : u^{(j)}\in L^p_{\alpha_j},\ j=0,1,\ldots,k\}.
\end{equation*}
We emphasize that in the above definition of $X^{k,p}_R$ it is considered the derivative in the weak sense. Using Proposition \ref{prop31} below, one can obtain $u\in AC_{\mathrm{loc}}^{k-1}(0,R)$ for all $u\in X^{k,p}_R$. The spaces $X_R^{k,p}$ and $X^{k,p}_{0,R}$ are complete under the norm
\begin{equation*}
\|u\|_{X_R^{k,p}}=\left(\sum_{j=0}^k\|u^{(j)}\|^p_{L^p_{\alpha_j}}\right)^{1/p}.
\end{equation*}

Next, we state that the definitions for $X^{k,p}_R$ using weak derivatives or absolutely continuous functions are equivalent (see \cite[Proposition 2.2]{Paper1}).

\begin{proposition}\label{prop31}
Suppose $1\leq p<\infty$ and $0<R\leq\infty$. Then $u\in X_R^{k,p}(\alpha_0,\alpha_1,\ldots,\alpha_k)$ holds if and only if $u=U$ a.e. in $(0,R)$ for some $U\in AC_{\mathrm{loc}}^{k-1}((0,R])$ such that $U^{(k)}(t)$ (in the classical sense) exists a.e. $t\in (0,R)$ and $U^{(k)}$ is a measurable function and
\begin{equation*}
\int_0^R\left|U^{(j)}(t)\right|^pt^{\alpha_j}\mathrm dt<\infty\mbox{ for }j=0,1,\ldots,k.
\end{equation*}
\end{proposition}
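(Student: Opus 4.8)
The plan is to reduce the statement to the classical one-dimensional characterization of weak derivatives by absolutely continuous representatives, the only genuinely new ingredient being the control of the radial power weights near the two endpoints. The first observation I would record is that on every compact subinterval $[a,b]\subset(0,R)$ the weight $t^{\alpha_j}$ is continuous and strictly positive, hence bounded above and below by positive constants; consequently $L^p_{\alpha_j}(0,R)$ embeds into $L^p_{\mathrm{loc}}(0,R)$ and therefore into $L^1_{\mathrm{loc}}(0,R)$. This turns the weighted integrability $u^{(j)}\in L^p_{\alpha_j}$ into ordinary local integrability of all the weak derivatives $u^{(0)},\dots,u^{(k)}$, which is exactly the hypothesis under which the one-dimensional theory applies.

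For the forward implication I would argue by finite induction. Since $u^{(j)}$ is by definition the weak derivative of $u^{(j-1)}$ and each lies in $L^1_{\mathrm{loc}}(0,R)$, I am in the setting of the standard lemma: a locally integrable function on an interval possessing a locally integrable weak derivative coincides a.e. with a locally absolutely continuous function whose pointwise derivative agrees a.e. with the weak derivative. Applying this lemma successively to $u^{(k-1)},u^{(k-2)},\dots,u^{(0)}=u$, and at each stage selecting the representative of $u^{(j)}$ so that it is the classical derivative of the representative chosen for $u^{(j-1)}$, produces a single function $U=u$ a.e. with $U\in AC^{k-1}_{\mathrm{loc}}(0,R)$ and $U^{(j)}=u^{(j)}$ a.e. for every $j$; the integrability of each $U^{(j)}$ in $L^p_{\alpha_j}$ is then inherited verbatim.

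The reverse implication is the more routine half. If $u=U$ a.e.\ with $U\in AC^{k-1}_{\mathrm{loc}}((0,R])$, then the local absolute continuity of $U^{(k-1)}$ guarantees that its a.e.-defined classical derivative $U^{(k)}$ is simultaneously its weak derivative on every $[a,b]$; iterating downward shows that $U^{(j)}$ is the weak $j$-th derivative of $U$ for $0\le j\le k$. The assumed bounds $\int_0^R|U^{(j)}|^p t^{\alpha_j}\,\mathrm dt<\infty$ then read precisely as $U^{(j)}\in L^p_{\alpha_j}$, whence $u=U\in X^{k,p}_R$.

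The step I expect to require the most care is the behaviour at the endpoint $R$ in the finite case, where the statement asks for absolute continuity on the half-open interval $(0,R]$ rather than merely on $(0,R)$. Here I would again exploit that $t^{\alpha_k}$ stays bounded below by a positive constant on each $[a,R]$ with $a>0$, so that $U^{(k)}\in L^1(a,R)$ and the Newton-Leibniz formula $U^{(k-1)}(r)=U^{(k-1)}(a)+\int_a^r U^{(k)}(s)\,\mathrm ds$ forces $U^{(k-1)}$ to possess a finite limit as $r\to R$ and to be absolutely continuous up to $r=R$. By contrast, the left endpoint $0$ never needs to be reached, since compact subintervals of $(0,R]$ stay uniformly away from the origin; this is precisely why the degeneracy or blow-up of $t^{\alpha_j}$ at $0$ is harmless.
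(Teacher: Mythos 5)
Your argument is correct and is the standard route: reduce to $L^1_{\mathrm{loc}}$ via the local two-sided bounds on the weights $t^{\alpha_j}$ away from $0$, invoke the one-dimensional equivalence of weak derivatives and locally absolutely continuous representatives iteratively, and handle the right endpoint $R<\infty$ by noting that $U^{(k)}\in L^1(a,R)$ forces $U^{(k-1)}$ (and hence all lower derivatives) to extend absolutely continuously up to $R$, which is exactly the point where $(0,R]$ differs from $(0,R)$. Note that the paper itself does not prove this proposition; it quotes it from \cite[Proposition 2.2]{Paper1}, so there is no in-paper proof to compare against, but your outline is the expected one and I see no gap beyond the routine bookkeeping (already indicated in your sketch) of choosing the $AC_{\mathrm{loc}}$ representatives coherently so that each is the classical derivative of the previous one.
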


For $0<R<\infty$, J. M. do \'O, A. C. Macedo, and J. F. de Oliveira \cite{MR4112674} established the Sobolev embedding for $X^{k,p}_{0,R}$ into $L^q_\theta$ (see \cite[Theorem 1.1]{MR4112674}). In \cite[Theorem 1.1]{Paper1}, the authors of the current paper, considering the weighted Sobolev space $X^{k,p}_R$ without boundary condition, proved that

\begin{theorem}\label{theo32}
Let $p\geq1$, $0<R<\infty$ and $\theta\geq\alpha_k-kp$.
\begin{flushleft}
$\mathrm{(a)}$ Sobolev case: If $ \alpha_k-kp+1>0$, then the continuous embedding holds
\begin{equation*}X^{k,p}_R(\alpha_0,\ldots,\alpha_k)\hookrightarrow L^q_\theta(0,R)\quad \text{for all}\quad 1\leq q\leq p^*:=\dfrac{(\theta+1)p}{\alpha_k-kp+1}. \end{equation*}
Moreover, the embedding is compact if $q<p^*.$ \\
$\mathrm{(b)}$ Sobolev Limit case: If $\alpha_k-kp+1=0$, then the compact embedding holds
\begin{equation*}
    X^{k,p}_R(\alpha_0,\ldots,\alpha_k)\hookrightarrow L^q_\theta(0,R)\quad \text{for all} \quad q\in[1,\infty).
\end{equation*}
\end{flushleft}
\end{theorem}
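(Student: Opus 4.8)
The plan is to reduce the embedding to an explicit pointwise bound for $u$ near the origin, combined with a sharp weighted Hardy inequality to reach the critical exponent, splitting the interval into a singular part $(0,\delta)$ and a regular part $[\delta,R]$. A scaling heuristic guides the whole argument: setting $\alpha_k=\theta=N-1$ turns $p^*$ into the classical Sobolev exponent $Np/(N-kp)$, so the origin plays the role of the concentration point and the proof should mirror the Euclidean one in the ``fractional dimension'' $N=\alpha_k+1$.

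First I would produce the pointwise estimate by iterated integration. By Proposition \ref{prop31} we may take $u\in AC^{k-1}_{\mathrm{loc}}(0,R]$, so starting from the top derivative and integrating once,
\begin{equation*}
\bigl|u^{(k-1)}(r)\bigr|\le\bigl|u^{(k-1)}(\rho)\bigr|+\|u^{(k)}\|_{L^p_{\alpha_k}}\Bigl(\int_r^\rho s^{-\alpha_k/(p-1)}\,\mathrm ds\Bigr)^{(p-1)/p}.
\end{equation*}
Because $\alpha_k>kp-1\ge p-1$ we have $\alpha_k/(p-1)>1$, the inner integral is of order $r^{-(\alpha_k-p+1)/(p-1)}$, and $|u^{(k-1)}(r)|\lesssim r^{-(\alpha_k-p+1)/p}$ near $0$. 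Integrating this bound a further $k-1$ times, each step lowers the exponent by one, and the hypothesis $\alpha_k-kp+1>0$ is exactly the condition that keeps every intermediate integral in the power-law regime; after $k$ steps one arrives at
\begin{equation*}
|u(r)|\le C\,\|u\|_{X^{k,p}_R}\,r^{-(\alpha_k-kp+1)/p},\qquad 0<r<\delta,
\end{equation*}
where all the polynomial (lower-order) terms $u^{(j)}(\rho)$ are absorbed into $\|u\|_{X^{k,p}_R}$ by choosing the base point $\rho$ through a mean-value selection, using $R<\infty$ and $u^{(j)}\in L^p_{\alpha_j}$. Inserting this into $\int_0^\delta|u|^q r^\theta\,\mathrm dr\lesssim\int_0^\delta r^{-q(\alpha_k-kp+1)/p+\theta}\,\mathrm dr$ shows convergence precisely when $q<p^*$, while on $[\delta,R]$ the weights are bounded above and below and the classical one-dimensional embedding $W^{k,p}(\delta,R)\hookrightarrow C([\delta,R])$ closes the estimate.

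To capture the sharp endpoint $q=p^*$, where the crude pointwise bound yields a borderline-divergent integral, I would instead invoke the optimal one-dimensional weighted Hardy inequality, whose exponent balance is tuned exactly to $p^*$; iterating its first-order form $k$ times (absorbing boundary contributions through the full norm) upgrades $q<p^*$ to $q\le p^*$. Compactness for $q<p^*$ would then follow from two ingredients: the strict subcritical gap makes $\int_0^\delta|u|^q r^\theta\,\mathrm dr$ uniformly small over bounded subsets as $\delta\to0$ (equi-integrability at the only possible concentration point, the origin), while on each $[\delta,R]$ the bound in $W^{k,p}$ gives Arzel\`a--Ascoli compactness; a diagonal argument over $\delta\to0$ then extracts an $L^q_\theta$-convergent subsequence. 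For the limit case (b) the final integration is logarithmic rather than a negative power, so the pointwise bound degrades only to $|u(r)|\lesssim(\log(1/r))^{(p-1)/p}$, which lies in $L^q_\theta$ for every finite $q$ and leaves the same equi-integrability argument intact for compactness.

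The step I expect to be the main obstacle is the sharp endpoint $q=p^*$: the elementary H\"older/pointwise route is off by exactly the critical power, and recovering it demands the optimal weighted Hardy inequality together with a verification that its balance condition and best constant coincide with $p^*$; handling the lower-order boundary terms that appear because $X^{k,p}_R$ carries no boundary condition, without destroying that optimal balance, is the delicate part of the argument.
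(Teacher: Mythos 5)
The paper does not actually prove Theorem \ref{theo32}: it is stated as a quoted result from \cite[Theorem 1.1]{Paper1} (building on \cite[Theorem 1.1]{MR4112674}), so there is no in-paper argument to compare against line by line. Judged on its own, your outline follows exactly the strategy used in those sources: a radial-type pointwise bound obtained by integrating from a mean-value base point $\rho\in(R/2,R)$ (this is how the absence of boundary conditions is handled, and it is legitimate since $R<\infty$ and $u^{(j)}\in L^p_{\alpha_j}$), the observation that $\alpha_k>kp-1$ keeps every intermediate integral in the power regime so that $|u(r)|\leq C\|u\|_{X^{k,p}_R}r^{-(\alpha_k-kp+1)/p}$, and the splitting into $(0,\delta)$ and $[\delta,R]$ with equi-integrability at the origin plus classical compactness of $W^{k,p}(\delta,R)$ for the compact embedding. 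This correctly yields the full statement for $1\leq q<p^*$ (the range $q<p$ being recovered by H\"older on the finite measure $r^\theta\,\mathrm dr$, using $\theta>-1$), and the logarithmic bound settles case (b).

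The one genuine gap is the endpoint $q=p^*$, which you name but do not execute. For $k=1$ the first-order weighted Hardy inequality with the Muckenhoupt condition $\sup_t\bigl(\int_0^t r^\theta\,\mathrm dr\bigr)^{1/q}\bigl(\int_t^R s^{-\alpha_1/(p-1)}\,\mathrm ds\bigr)^{1/p'}<\infty$ does close at $q=p^*$ precisely because $\theta\geq\alpha_1-p$ forces $p^*\geq p$, so that step is fine. For $k\geq2$, however, ``iterating the first-order form $k$ times'' requires choosing a chain of intermediate weights $\theta_1,\ldots,\theta_{k-1}$ so that at each stage the target exponent is at least the source exponent and the Muckenhoupt balance holds, and then verifying the chain terminates at $(\theta,p^*)$; this bookkeeping is the actual content of the endpoint case and is not supplied. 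The constant terms $u^{(j)}(\rho)$ are harmless there (they contribute bounded functions, which lie in $L^{p^*}_\theta(0,R)$), so the difficulty is only the exponent chain, not the boundary terms you single out. As written, the proposal proves the theorem except at $q=p^*$, where it correctly identifies the right tool but leaves its application unverified.
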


In this paper, we focus on the weighted Sobolev space $X^{k,p}_R$ with $R=\infty$ which generalizes the space of radial functions in $W^{k,p}(\mathbb R^N)$ because all weights are not necessarily equal to $r^{N-1}$. We establish, similar to Theorem \ref{theo32}, the embedding $X^{k,p}_\infty\hookrightarrow L^q_\theta$ in our first main result which is the following
\begin{theo}\label{theoimersaoinfinito}
Let $X^{k,p}_\infty(\alpha_0,\ldots,\alpha_k)$ be the weighted Sobolev space with $p\geq1$ and $\alpha_{j}\geq\alpha_k-(k-j)p$ for all $j=0,\ldots,k-1$. Assume $\alpha_k-kp\leq\theta\leq\alpha_0$.
\begin{flushleft}
    $\mathrm{(a)}$ Sobolev embedding: If $\alpha_k-kp+1>0$, then the following continuous embedding holds:
    \begin{equation*}
    X^{k,p}_\infty\hookrightarrow L^q_\theta,\quad p\leq q\leq p^*,
    \end{equation*}
    where
    \begin{equation*}
    p^*:=\dfrac{(\theta+1)p}{\alpha_k-kp+1}.
    \end{equation*}
    Moreover, it is a compact embedding if one of the following two conditions is fulfilled:
    \begin{flushleft}
        $\mathrm{(i)}$ $\theta=\alpha_0$ and $p<q<p^*$;\\
        $\mathrm{(ii)}$ $\theta<\alpha_0$ and $p\leq q<p^*$.
    \end{flushleft}
    $\mathrm{(b)}$ Limiting Sobolev embedding: If $\alpha_k-kp+1=0$, then the following continuous embedding holds:
    \begin{equation*}
    X^{k,p}_\infty\hookrightarrow L^q_\theta,\quad p\leq q<\infty.
    \end{equation*}
    Moreover, it is compact embedding if one of the following two conditions is fulfilled:
    \begin{flushleft}
        $\mathrm{(i)}$ $\theta=\alpha_0$, $q>p>1$ and $\alpha_0>-1$;\\
        $\mathrm{(ii)}$ $\theta<\alpha_0$ and $q\geq p\geq1$.
    \end{flushleft}
\end{flushleft}
\end{theo}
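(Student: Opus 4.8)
The plan is to localize by writing $(0,\infty)=(0,1)\cup(1,\infty)$ and to treat the two regions by completely different mechanisms. On the bounded piece, the restriction $u|_{(0,1)}$ lies in $X^{k,p}_1(\alpha_0,\ldots,\alpha_k)$ with $\|u|_{(0,1)}\|_{X^{k,p}_1}\leq\|u\|_{X^{k,p}_\infty}$, so both the continuous and the compact embeddings into $L^q_\theta(0,1)$ follow verbatim from Theorem \ref{theo32} (which already covers $1\leq q\leq p^*$ in the Sobolev case, compact for $q<p^*$, and all $q\in[1,\infty)$ in the limiting case). Consequently the entire new content of the theorem is the control of the tail integral $\int_1^\infty|u|^q r^\theta\,\mathrm dr$, and everything reduces to a sharp pointwise decay estimate for $u$ at infinity.

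To produce that estimate I would argue from the top derivative downward by integration from infinity. First, since $\alpha_k-kp+1>0$ forces $\alpha_k>p-1$, Hölder's inequality shows $\int_r^\infty|u^{(k)}(s)|\,\mathrm ds$ converges and tends to $0$; as $u^{(k-1)}\in L^p_{\alpha_{k-1}}$ with $\alpha_{k-1}>-1$, the only admissible limit of $u^{(k-1)}$ at infinity is $0$, whence $u^{(k-1)}(r)=-\int_r^\infty u^{(k)}(s)\,\mathrm ds$ and $|u^{(k-1)}(r)|\leq C\|u^{(k)}\|_{L^p_{\alpha_k}}\,r^{-(\alpha_k-p+1)/p}$. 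Iterating the representation $u^{(j)}(r)=-\int_r^\infty u^{(j+1)}(s)\,\mathrm ds$ and integrating the power bound from the previous step (the convergence of each tail integral being guaranteed by $\alpha_k>kp-1$, while $\alpha_j>-1$ forces the limits to vanish) yields, after $k$ steps,
\[
|u(r)|\leq C\,\|u^{(k)}\|_{L^p_{\alpha_k}}\,r^{-\mu},\qquad \mu:=\frac{\alpha_k-kp+1}{p}>0,\qquad r\geq 1 .
\]

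With this in hand, the Sobolev case is immediate. Writing $|u|^q=|u|^{q-p}|u|^p$ and inserting the decay bound gives $\int_1^\infty|u|^q r^\theta\,\mathrm dr\leq C\|u^{(k)}\|_{L^p_{\alpha_k}}^{\,q-p}\int_1^\infty|u|^p r^{\theta-\mu(q-p)}\,\mathrm dr$. Since $\mu p^*=\theta+1$, at $q=p^*$ the weight exponent equals exactly $\theta-\mu(p^*-p)=\alpha_k-kp\leq\alpha_0$, and it only decreases as $q$ ranges over $[p,p^*]$; hence $\theta-\mu(q-p)\leq\alpha_0$ throughout, and on $(1,\infty)$ one has $r^{\theta-\mu(q-p)}\leq r^{\alpha_0}$, so the tail is bounded by $C\|u\|_{X^{k,p}_\infty}^{\,q}$. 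This proves the continuous embedding. For compactness I would extract a weakly convergent subsequence (using reflexivity when $p>1$, and Helly's selection together with the absolute‑continuity structure of Proposition \ref{prop31} when $p=1$), obtain strong convergence on each $(0,N)$ from the compact embedding of Theorem \ref{theo32} (valid precisely because $q<p^*$), and show the tails are uniformly small: $\int_N^\infty|u_n|^q r^\theta\,\mathrm dr\leq C\,N^{\,\theta-\mu(q-p)-\alpha_0}\|u_n\|_{X^{k,p}_\infty}^{\,q}\to0$ whenever $\theta-\mu(q-p)<\alpha_0$ strictly. This strict inequality holds for all $q\geq p$ when $\theta<\alpha_0$ (case (ii)) and for $q>p$ when $\theta=\alpha_0$ (case (i)), which is exactly how the two compactness conditions arise.

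The genuine obstacle is the limiting case $\alpha_k-kp+1=0$, where $\mu=0$ and the pointwise bound above degenerates to mere boundedness of $u$. Here the chain still yields the decay $|u'(r)|\leq C\|u^{(k)}\|_{L^p_{\alpha_k}}\,r^{-1}$, hence only the logarithmic growth $|u(r)|\leq C\|u\|_{X^{k,p}_\infty}(1+\log r)$. When $\theta<\alpha_0$ this crude bound already suffices, since $(1+\log r)^{q-p}r^{\theta-\alpha_0}\to0$ and the tail is absorbed into $\|u\|_{L^p_{\alpha_0}}^p$, giving continuity and compactness for all $q\geq p\geq1$ (case (b)(ii)). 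The delicate subcase is $\theta=\alpha_0$, where the logarithm is not absorbed; there I would recover honest polynomial decay from the interplay between the $L^p_{\alpha_0}$–integrability and the $r^{-1}$ decay of $u'$, via $|u(r)|^p\leq p\int_r^\infty|u|^{p-1}|u'|\,\mathrm ds$ followed by Hölder, which produces $|u(r)|\leq C\|u\|_{X^{k,p}_\infty}\,r^{-\lambda}$ with $\lambda=(p-1)(1+\alpha_0)/p^2>0$; this is precisely where the hypotheses $p>1$ and $\alpha_0>-1$ of case (b)(i) enter, and the resulting decay then drives both continuity for $q\geq p$ and compactness for $q>p$ exactly as in the Sobolev case. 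I expect this refined limiting estimate, together with the $p=1$ compactness (where reflexivity is unavailable), to be the main technical hurdle.
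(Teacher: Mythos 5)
Your argument is correct, and it reaches the theorem by a route that only partially coincides with the paper's. The paper's proof rests on a single Radial Lemma (Lemma \ref{lemma53}), proved via the density of compactly supported functions (Lemma \ref{lemmacompsuppdense}), which gives the decay $|u(r)|\leq C\|u\|_{L^p_{\alpha_0}}^{(p-1)/p}\|u'\|_{L^p_{\alpha_1}}^{1/p}r^{-[\alpha_0(p-1)+\alpha_1]/p^2}$; this one estimate covers both the Sobolev and the limiting case for $k=1$ (Proposition \ref{propimersaoinfinito}), and the higher-order statement is then obtained by induction, embedding $u'\in X^{k-1,p}_\infty(\alpha_1,\ldots,\alpha_k)\hookrightarrow L^p_{\alpha_k-(k-1)p}$ so that $u\in X^{1,p}_\infty(\alpha_0,\alpha_k-(k-1)p)$. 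You instead chain pointwise tail estimates for $u^{(k-1)},\ldots,u$ directly by integrating from infinity, obtaining in the Sobolev case the rate $r^{-(\alpha_k-kp+1)/p}$ controlled by $\|u^{(k)}\|_{L^p_{\alpha_k}}$ alone; this avoids both the induction and the density lemma there, and your bookkeeping ($\mu p^*=\theta+1$, exponent $\leq\alpha_0$ with strictness exactly in the stated compactness regimes) is right. In the limiting case your chain degenerates, and the estimate you then invoke — $|u(r)|^p\leq p\int_r^\infty|u|^{p-1}|u'|\,\mathrm ds$ plus H\"older with the weights $\alpha_0$ and $\alpha_1$, giving the rate $(\alpha_0+1)(p-1)/p^2$ — is precisely the paper's Radial Lemma, so there the two proofs coincide. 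Two small points you should make explicit: the identity $|u^{(j)}(r)|\leq\int_r^\infty|u^{(j+1)}|$ and, in the limiting case, $|u(r)|^p=-\int_r^\infty(|u|^p)'\,\mathrm ds$ require knowing that the relevant limits at infinity vanish; this follows (as you indicate for $u^{(k-1)}$) from the finiteness of the tail integral together with $u^{(j)}\in L^p_{\alpha_j}$ and $\alpha_j>-1$, or alternatively from the paper's density argument. The trade-off is that the paper's combined radial estimate is reused later for the Trudinger--Moser analysis, whereas your top-derivative chain is self-contained for the embedding but must be supplemented in the limiting case.
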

\begin{remark}
Theorem \ref{theoimersaoinfinito} generalizes \cite[Lemma 2.4]{MR3670473} because their result only works for the first order derivative. Moreover, we do not impose the restrictive condition: $\theta=\alpha_0\geq0$ and $p\geq2$ assumed in \cite{MR3670473}.
\end{remark}

For our next main results, we study the finiteness of the following supremum
\begin{equation}\label{eqdpmu1}
d_{p,\mu,\theta}:=\sup_{\underset{\|u\|_{X^{1,p}_\infty}=1}{u\in X^{1,p}_\infty}}\int_0^\infty\exp_p(\mu|u|^{\frac{p}{p-1}})r^{\theta}\mathrm dr
\end{equation}
and the attainability of the following supremum
\begin{equation}\label{eqdpmu}
d_{p,\mu}:=\sup_{\underset{\|u\|_{X^{1,p}_\infty}=1}{u\in X^{1,p}_\infty}}\int_0^\infty\exp_p(\mu|u|^{\frac{p}{p-1}})r^{\alpha_0}\mathrm dr
\end{equation}
where
\begin{equation*}
    \exp_p(t):=\sum_{j=0}^\infty\dfrac{t^{p-1+j}}{\Gamma(p+j)}.
\end{equation*}

\begin{remark}\label{remark1}
Let us discuss the choice of the function $\exp_p$ briefly. When $p\in\mathbb N$ it is necessary to remove some terms in the Taylor expansion of the function $\exp$ to guarantee that
\begin{equation*}
\int_0^\infty\exp(\mu|u|^{\frac{p}{p-1}})r^{\alpha_0}\mathrm dr
\end{equation*}
is finite for each $u\in X_\infty^{1,p}$. However, when $p\notin\mathbb N$ (which corresponds to the fractional dimension case) many authors \cite{MR3670473,MR3209335,JMBO,JMBO2,MR4097244, LLZ-ANS, LLZ-RMI} considered the function
\begin{equation*}
\phi_p(t):=\mathrm{e}^{t}-\sum_{j=0}^{\lceil p-2\rceil}\dfrac{t^j}{j!}=\sum_{j=\lceil p-1\rceil}^{\infty}\dfrac{t^j}{j!},
\end{equation*}
where $\lceil x\rceil=\min\{n\in\mathbb Z\colon n\geq x\}$ denotes the ceiling function applied on $x\in\mathbb R$, and studied the supremum
\begin{equation*}
\sup_{\underset{\|u\|_{X^{1,p}_\infty}=1}{u\in X^{1,p}_\infty}}\int_0^\infty\phi_p(\mu|u|^{\frac{p}{p-1}})r^{\alpha_0}\mathrm dr.
\end{equation*}
Theorems that involve $\phi_p$ are weaker than those that involve $\exp_p$ because $\phi_p(t)=\exp_{\lceil p\rceil}(t)<\exp_p(t)$ for $t>0$ and $p\notin\mathbb N$ (see Lemma \ref{lemmaphiN}). Furthermore, comparing the differences between
\begin{equation*}
\phi_p(|u|^{\frac{p}{p-1}})=\dfrac{|u|^{\frac{p\lceil p-1\rceil}{p-1}}}{\lceil p-1\rceil!}+\dfrac{|u|^{\frac{p\lceil p\rceil}{p-1}}}{\lceil p\rceil!}+\cdots
\end{equation*}
and
\begin{equation*}
\exp_p(|u|^{\frac{p}{p-1}})=\dfrac{|u|^{p}}{\Gamma(p)}+\dfrac{|u|^{\frac{p^2}{p-1}}}{\Gamma(p+1)}+\cdots.
\end{equation*}
and from the compact embedding $X^{1,p}_\infty(\alpha_0,p-1)\hookrightarrow L^q_\theta$ for $p<q<\infty$ (see Theorem \ref{theoimersaoinfinito}) and $p\lceil p-1\rceil/(p-1)>p$, we have that $\phi_p$ fits in the compact case and $\exp_p$ fits just in the continous case. Therefore, $\exp_p$ is the biggest power series function (up to multiplication by $|u|^\alpha$) which only fits in the continuous case.
\end{remark}

Analogously, the argument from Remark \ref{remark1} can be used for the embedding $W^{\gamma,\frac{N}{\gamma}}(\mathbb R^N)\hookrightarrow L^q(\mathbb R^N)$ where $0<\gamma<N$ and $p\leq q<\infty$. For instance, \cite[Theorem 1.7]{MR3053467} in the work of N. Lam and G. Lu can be sharpened into the following theorem and its proof follows by the same argument as in \cite{MR3053467} with $\exp_p(t)\leq \mathrm{e}^t$ which is a consequence of Lemma \ref{lemmaphiN}.
\begin{theo}
Let $0<\gamma<N$ be an arbitrary real positive number, $p=\frac{N}{\gamma}$, and $\tau>0$. There holds
\begin{equation*}
\sup_{\underset{\|(\tau I-\Delta)^{\frac{\gamma}2}u\|_p\leq1}{u\in W^{\gamma,p}(\mathbb R^N)}}\int_{\mathbb R^N}\exp_p\left(\beta_0(N,\gamma)|u|^{\frac{p}{p-1}}\right)\mathrm dx<\infty
\end{equation*}
where $\omega_{N-1}$ is the area of the surface of the unit ball $N$-ball and
\begin{equation*}
\beta_0(N,\gamma)=\dfrac{N}{\omega_{N-1}}\left[\dfrac{\pi^{\frac{N}2}2^\gamma\Gamma\left(\frac{\gamma}{2}\right)}{\Gamma\left(\frac{N-\gamma}{2}\right)}\right]^{\frac{p}{p-1}}.
\end{equation*}
Furthermore, this inequality is sharp, i.e., if $\beta_0(N,\gamma)$ is replaced by any $\beta>\beta_0(N,\gamma)$, then the supremum is infinite.
\end{theo}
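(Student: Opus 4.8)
The plan is to deduce the statement from the sharp Trudinger--Moser inequality of Lam and Lu \cite[Theorem 1.7]{MR3053467}, whose conclusion is identical except that the subcritical remainder $\phi_p$ appears in place of $\exp_p$, and then to transfer it using the elementary comparisons recorded in Lemma \ref{lemmaphiN}, namely $\exp_p(t)\leq\mathrm e^{t}$ for $t\geq0$ and $\phi_p(t)=\exp_{\lceil p\rceil}(t)\leq\exp_p(t)$ for $t>0$. The whole point is thus to enlarge $\phi_p$ to the bigger function $\exp_p$ without destroying finiteness, and to check that the critical constant $\beta_0(N,\gamma)$ is unaffected. Equivalently, this is exactly what it means to ``rerun'' the Lam--Lu argument with $\exp_p(t)\leq\mathrm e^{t}$ inserted wherever the tail of the remainder is estimated.

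For the finiteness half I would fix $u\in W^{\gamma,p}(\mathbb R^N)$ with $\|(\tau I-\Delta)^{\gamma/2}u\|_p\leq1$, write $t=\beta_0(N,\gamma)|u|^{\frac{p}{p-1}}$, and split $\mathbb R^N=\{|u|\leq1\}\cup\{|u|>1\}$, so that the two pieces correspond to $t\leq\beta_0(N,\gamma)$ and $t\geq\beta_0(N,\gamma)$. On $\{|u|\leq1\}$ the argument stays in a bounded interval, where the series defining $\exp_p$ is controlled by its leading term: there is $C>0$ with $\exp_p(t)\leq Ct^{p-1}$ for $t\in[0,\beta_0(N,\gamma)]$, hence $\exp_p(\beta_0|u|^{\frac{p}{p-1}})\leq C'|u|^{p}$. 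Because $\tau>0$ the Bessel kernel $G_\gamma$ lies in $L^1(\mathbb R^N)$, giving $W^{\gamma,p}(\mathbb R^N)\hookrightarrow L^p(\mathbb R^N)$ and $\|u\|_p\leq\|G_\gamma\|_1\,\|(\tau I-\Delta)^{\gamma/2}u\|_p\leq C$, so this contribution is bounded uniformly in $u$. On $\{|u|>1\}$ I would use that $\phi_p>0$ on $(0,\infty)$ and $\phi_p(t)/\mathrm e^{t}\to1$ as $t\to\infty$, which yields a constant $C_0$ with $\mathrm e^{t}\leq C_0\phi_p(t)$ for all $t\geq\beta_0(N,\gamma)$; combined with $\exp_p(t)\leq\mathrm e^{t}$ this gives $\exp_p(\beta_0|u|^{\frac{p}{p-1}})\leq C_0\,\phi_p(\beta_0|u|^{\frac{p}{p-1}})$ on that set, and integrating and invoking \cite[Theorem 1.7]{MR3053467} bounds it uniformly as well. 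Adding the two pieces produces the desired finiteness of the supremum.

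For the sharpness half, suppose $\beta>\beta_0(N,\gamma)$. By Lemma \ref{lemmaphiN} we have $\phi_p(t)\leq\exp_p(t)$ for $t>0$, so for every admissible $u$,
\begin{equation*}
\int_{\mathbb R^N}\exp_p\!\left(\beta|u|^{\frac{p}{p-1}}\right)\mathrm dx\geq\int_{\mathbb R^N}\phi_p\!\left(\beta|u|^{\frac{p}{p-1}}\right)\mathrm dx .
\end{equation*}
The supremum of the right-hand side is infinite by the sharpness statement in \cite[Theorem 1.7]{MR3053467}, witnessed by the usual Moser-type concentrating family, which is admissible under the very same constraint; therefore the supremum of the left-hand side is infinite as well.

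The main obstacle is confined to the region where $|u|$ is large: one must make sure that replacing $\phi_p$ by the strictly larger $\exp_p$ does not force a smaller admissible exponent. This is precisely what the two-sided comparison $\exp_p(t)\leq\mathrm e^{t}\leq C_0\phi_p(t)$ for $t\geq\beta_0(N,\gamma)$ resolves, since it shows that in the critical regime the two functions differ only by a multiplicative constant, so the sharp exponential integrability with constant $\beta_0(N,\gamma)$ proved by Lam and Lu carries over intact. The new lower-order terms of $\exp_p$, whose leading contribution is the $L^1$-function $|u|^{p}$, never interact with the critical constant and are absorbed entirely by the embedding $W^{\gamma,p}(\mathbb R^N)\hookrightarrow L^p(\mathbb R^N)$.
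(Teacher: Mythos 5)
Your argument is correct, and it reaches the conclusion by a genuinely different route from the one the paper indicates. The paper disposes of this theorem with a one-line remark: rerun the proof of \cite[Theorem 1.7]{MR3053467} verbatim, inserting the bound $\exp_p(t)\leq\mathrm e^{t}$ from Lemma \ref{lemmaphiN} wherever the exponential tail is estimated (the only other change being that the leading term of $\exp_p$ is $|u|^p$ rather than $|u|^{p\lceil p-1\rceil/(p-1)}$, which the embedding into $L^p$ absorbs). You instead treat the Lam--Lu theorem as a black box and reduce to it: on $\{|u|\leq1\}$ you use $\exp_p(t)\leq Ct^{p-1}$ on the compact interval $[0,\beta_0]$ together with $\|u\|_p\leq\|G_\gamma\|_1\|(\tau I-\Delta)^{\gamma/2}u\|_p$, and on $\{|u|>1\}$ you use the two-sided comparison $\exp_p(t)\leq\mathrm e^{t}\leq C_0\,\phi_p(t)$ for $t\geq\beta_0>0$, which is legitimate since $\phi_p>0$ on $(0,\infty)$ and $\phi_p(t)/\mathrm e^{t}\to1$; the sharpness half transfers for free from $\phi_p\leq\exp_p$. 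Your reduction buys verifiability — one only needs the \emph{statement} of \cite[Theorem 1.7]{MR3053467}, not its internal structure — at the cost of a multiplicative constant $C_0$ depending on $p$ and $\beta_0(N,\gamma)$ in the resulting bound, which is irrelevant for finiteness. The paper's route avoids that constant but requires reopening the cited proof. Both are sound; the critical constant $\beta_0(N,\gamma)$ is untouched either way, exactly as you observe.
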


Recently, the authors in \cite[Theorem 1.2]{Paper1}, assuming the case $R<\infty$, have investigated the finiteness and attainability of
\begin{equation*}
\ell_{\mu}:= \sup_{\|u\|_{X_R^{k,p}}\leq1}\int_0^R\mathrm{e}^{\mu|u|^{\frac{p}{p-1}}}r^\theta \mathrm dr
\end{equation*}
for the subcritical case. More specifically, they concluded the following theorem.

\begin{theorem}\label{theo01}
Let $X^{k,p}_R(\alpha_0,\ldots,\alpha_k)$ with $R<\infty$, $\alpha_k-kp+1=0$, $p>1$ and $\theta>-1$. Set $\mu_0:=(\theta+1)[(k-1)!]^{p/(p-1)}$.
\begin{flushleft}
$\mathrm{(a)}$ For all $\mu\geq0$ and $u\in X^{k,p}_R,$ we have $\exp(\mu|u|^{p/(p-1)})\in L^1_\theta$.\\
$\mathrm{(b)}$ If $0\leq\mu<\mu_0,$ then $\ell_{\mu}$ is finite. Moreover, $\ell_{\mu}$  is attained by a nonnegative function $u_0 \in X^{k,p}_R$ with $\|u_0\|_{X^{k,p}_R}=1$.\\
$\mathrm{(c)}$ If $\mu>\mu_0$ and $\alpha_i-ip+1>0$ for $i=0,\ldots,k-1$, then
$\ell_{\mu}=\infty$.
\end{flushleft}
\end{theorem}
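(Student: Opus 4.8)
The plan is to reduce the whole statement to one sharp pointwise estimate for functions in $X^{k,p}_R$ in the limiting regime $\alpha_k-kp+1=0$, and then dispatch the three parts by (a) a localization argument, (b) a uniform bound followed by the direct method, and (c) an explicit extremal sequence.

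First I would establish the representation formula. Since $u\in AC^{k-1}_{\mathrm{loc}}(0,R)$ and $R<\infty$, Taylor's formula with integral remainder anchored at $R$ gives
\[ u(r) = \sum_{j=0}^{k-1}\frac{u^{(j)}(R)}{j!}(r-R)^j + \frac{(-1)^k}{(k-1)!}\int_r^R (s-r)^{k-1}u^{(k)}(s)\,\mathrm ds =: P(r) + \mathcal R(r). \]
Applying Hölder's inequality to $\mathcal R$ with conjugate exponent $q=p/(p-1)$, using $(s-r)^{(k-1)q}\le s^{(k-1)q}$ together with the crucial identity $(k-1)q-\alpha_k/(p-1)=-1$ (which is exactly the limiting condition), the weight integral collapses to $\int_r^R s^{-1}\,\mathrm ds=\log(R/r)$, so that
\[ |\mathcal R(r)| \le \frac{1}{(k-1)!}\,\|u^{(k)}\|_{L^p_{\alpha_k}}\Big(\log\frac{R}{r}\Big)^{(p-1)/p}. \]
This is the source of the sharp constant: the factor $1/(k-1)!$ forces the threshold $\mu_0=(\theta+1)[(k-1)!]^{p/(p-1)}$. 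The polynomial part $P$ is uniformly bounded on $(0,R)$ by $C\|u\|_{X^{k,p}_R}$, since near $r=R$ the weights are comparable to constants and the local embedding $X^{k,p}_R\hookrightarrow C^{k-1}([R/2,R])$ controls the values $u^{(j)}(R)$.

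For part (a), with $u$ fixed I would localize: splitting $\int_r^R=\int_r^\delta+\int_\delta^R$ \emph{before} applying Hölder shows that for $r<\delta$ the coefficient of $\log(R/r)$ is the tail $g(\delta)=(\int_0^\delta|u^{(k)}|^p s^{\alpha_k}\,\mathrm ds)^{1/p}$, which tends to $0$ as $\delta\to0$ by absolute continuity of the integral. Hence for any $\mu$ one may choose $\delta$ so small that $\mu(1+\varepsilon)[(k-1)!]^{-q}g(\delta)^{q}<\theta+1$, whence $e^{\mu|u|^{q}}r^\theta\le C\,r^{\theta-\mu(1+\varepsilon)[(k-1)!]^{-q}g(\delta)^q}$ is integrable near $0$; on $[\delta,R]$ the function $u$ is bounded, giving $\exp(\mu|u|^{q})\in L^1_\theta$. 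For the finiteness in part (b) I would instead use the global bound with the full norm: since $\|u^{(k)}\|_{L^p_{\alpha_k}}\le1$ and $|P|\le C$, the elementary inequality $(a+b)^q\le(1+\varepsilon)a^q+C_\varepsilon b^q$ yields $|u(r)|^q\le(1+\varepsilon)[(k-1)!]^{-q}\log(R/r)+C$; therefore $e^{\mu|u|^q}r^\theta\le C'\,r^{\theta-\mu(1+\varepsilon)[(k-1)!]^{-q}}$, and choosing $\varepsilon$ small enough that $\mu(1+\varepsilon)<\mu_0$ keeps the exponent $>-1$ uniformly, so $\ell_\mu\le C''<\infty$.

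For the attainability in part (b) I would run the direct method. Take a maximizing sequence $\{u_n\}$ with $\|u_n\|_{X^{k,p}_R}=1$; by reflexivity ($p>1$) pass to $u_n\rightharpoonup u_0$, and by the compact embedding of Theorem \ref{theo32}(b) obtain $u_n\to u_0$ in every $L^q_\theta$ and hence a.e. (along a subsequence). The estimate from the previous step furnishes the uniform dominating function $C'r^{\theta-\mu(1+\varepsilon)[(k-1)!]^{-q}}\in L^1(0,R)$ — this is exactly where $\mu<\mu_0$ gives equi-integrability — so Vitali's theorem yields $\int_0^R e^{\mu|u_n|^q}r^\theta\,\mathrm dr\to\int_0^R e^{\mu|u_0|^q}r^\theta\,\mathrm dr=\ell_\mu$. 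Weak lower semicontinuity gives $\|u_0\|_{X^{k,p}_R}\le1$; since $u_0\ne0$ (the value at $u=0$ is strictly below $\ell_\mu$), if $\|u_0\|<1$ then rescaling $u_0/\|u_0\|$ would strictly increase the functional, contradicting maximality, so $\|u_0\|=1$ and the supremum is attained. Passing to $|u_0|$ (or arguing through the Euler--Lagrange equation) produces a nonnegative maximizer.

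The main obstacle is part (c). Here I would build a Moser-type extremal sequence concentrating at the origin: functions $u_n$ whose top derivative nearly saturates the Hölder inequality above, so that near $0$ one has $|u_n(r)|\ge(1-o(1))[(k-1)!]^{-1}(\log(R/r))^{(p-1)/p}$ while $\|u_n^{(k)}\|_{L^p_{\alpha_k}}\to1$. The delicate points are, first, choosing the truncation and concentration scales so that $u_n\in X^{k,p}_R$ with $\|u_n\|_{X^{k,p}_R}\le1$ after normalization; this is precisely where the hypothesis $\alpha_i-ip+1>0$ for $i=0,\dots,k-1$ enters, guaranteeing that the lower-order norms $\|u_n^{(i)}\|_{L^p_{\alpha_i}}$ remain bounded (they lie in the subcritical Sobolev regime) and thus do not spoil the normalization. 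Second, for $\mu>\mu_0$ one gets $\mu|u_n(r)|^{q}\ge(1+\delta)(\theta+1)\log(R/r)$ on a shrinking neighbourhood of $0$, so that $\int_0^R e^{\mu|u_n|^q}r^\theta\,\mathrm dr\gtrsim\int r^{\theta-(1+\delta)(\theta+1)}\,\mathrm dr=\int r^{-1-\delta(\theta+1)}\,\mathrm dr\to\infty$, forcing $\ell_\mu=\infty$. Verifying the derivative-norm asymptotics for the higher-order profile, and that the lower-order contributions are genuinely absorbed, is the technically heaviest computation of the proof.
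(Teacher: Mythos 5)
A preliminary remark: this paper does not prove Theorem \ref{theo01} at all --- it is imported verbatim from \cite[Theorem 1.2]{Paper1} --- so there is no in-paper proof to compare against, and your proposal has to be judged on its own terms. Your strategy is the standard (and surely the intended) one: the Taylor/H\"older pointwise estimate $|u(r)|\le C\|u\|_{X^{k,p}_R}+\tfrac{1}{(k-1)!}\|u^{(k)}\|_{L^p_{\alpha_k}}(\log(R/r))^{(p-1)/p}$, whose constant $1/(k-1)!$ produces exactly the threshold $\mu_0$; localization of the tail of $\|u^{(k)}\|_{L^p_{\alpha_k}}$ near the origin for (a); the uniform integrable majorant $r^{\theta-\mu(1+\varepsilon)[(k-1)!]^{-p'}}$ plus the direct method for (b); and concentrating Adams--Moser functions for (c), with $u_n^{(k)}(s)\sim c_n s^{-k}$ on $(r_n,R)$, where the hypothesis $\alpha_i-ip+1>0$ is precisely what makes the lower-order seminorms vanish. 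These computations all check out (the exponent identity $(k-1)p'-\alpha_k/(p-1)=-1$ is correct), though (c) as written is a programme rather than a proof: exhibiting the test functions and verifying the norm asymptotics is the substance of the sharpness assertion and should not be left as a remark.

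The one genuine gap is the nonnegativity of the maximizer when $k\ge2$. Passing to $|u_0|$ fails there: if $u_0$ changes sign transversally, then $|u_0|\notin AC^{k-1}_{\mathrm{loc}}(0,R)$ and hence $|u_0|\notin X^{k,p}_R$. The natural repair --- replace $u_0^{(k)}$ by $|u_0^{(k)}|$ and integrate back $k$ times from $R$, producing a nonnegative $v\ge|u_0|$ with the same top-order norm --- does not obviously preserve the constraint either, since the lower-order quantities $\|v^{(j)}\|_{L^p_{\alpha_j}}$ dominate, rather than are dominated by, $\|u_0^{(j)}\|_{L^p_{\alpha_j}}$, so $\|v\|_{X^{k,p}_R}$ may exceed $1$. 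The parenthetical appeal to the Euler--Lagrange equation is likewise not a proof, as maximum principles for $k$-th order operators are not automatic. You need either to supply an argument for this point or to note that your truncation argument only settles the nonnegativity claim for $k=1$, where $\||u_0|'\|_{L^p_{\alpha_1}}=\|u_0'\|_{L^p_{\alpha_1}}$ makes it immediate.
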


Assuming $k=1$ we are able to establish a similar result to Theorem \ref{theo01} for the case $R=\infty$. Note that \cite[Theorem 1.1]{MR4097244} also considered the unbounded domain case but under the restrictive condition $\theta=\alpha_0$ and $p\geq2$. Next, we establish a Trudinger-Moser embedding on unbounded domains for weighted Sobolev spaces studying the finiteness of \eqref{eqdpmu1}.

\begin{theo}\label{theo0}
Let $\theta>-1$ and $X^{1,p}_\infty(\alpha_0,p-1)$ be the weighted Sobolev space such that $\alpha_0>-1$ and $p>1$.
\begin{flushleft}
$\mathrm{(a)}$ For all $\mu\geq0$ and $u\in X^{1,p}_\infty,$ we have $\exp_p(\mu|u|^{\frac{p}{p-1}})\in L^1_\theta$.\\
$\mathrm{(b)}$ If $0\leq\mu<\theta+1,$ then $d_{p,\mu,\theta}$ is finite.\\
$\mathrm{(c)}$ If $\mu>\theta+1$, then
$d_{p,\mu,\theta}=\infty$.
\end{flushleft}
\end{theo}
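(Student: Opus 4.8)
The plan is to reduce everything to two pointwise estimates and then treat the origin and infinity separately. The conceptual backbone is the substitution $r=\mathrm e^{-s}$, under which $\int_0^\infty|u'|^pr^{p-1}\,\mathrm dr=\int_{\mathbb R}|w'|^p\,\mathrm ds$ with $w(s)=u(\mathrm e^{-s})$, and the target becomes a Trudinger--Moser integral $\int_{\mathbb R}\exp_p(\mu|w|^{\frac p{p-1}})\mathrm e^{-(\theta+1)s}\,\mathrm ds$ on the whole line against an exponential weight. This makes transparent that the critical constant $\theta+1$ arises from balancing the weight against the at most logarithmic growth of $u$ at the origin, and that the concentration point is $r=0$, not $r=\infty$. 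First I would record the two workhorse bounds. Near the origin, H\"older's inequality with exponents $p$ and $p/(p-1)$ gives, for $0<r\le\rho$, \[ |u(r)-u(\rho)|\le\Bigl(\int_r^\rho|u'|^pt^{p-1}\,\mathrm dt\Bigr)^{1/p}\bigl(\ln(\rho/r)\bigr)^{\frac{p-1}p}. \] At infinity I would prove a uniform bound and decay: integrating $\tfrac{\mathrm d}{\mathrm dr}|u|^p$ from $r$ to $\infty$ (where $u\to0$) and applying a weighted H\"older inequality, in which $\alpha_0>-1$ guarantees that the leftover weight $t^{-(p-1)(\alpha_0+1)/p}$ stays bounded on $[1,\infty)$, yields for $r\ge1$ \[ |u(r)|^p\le p\Bigl(\int_r^\infty|u|^pt^{\alpha_0}\,\mathrm dt\Bigr)^{\frac{p-1}p}\Bigl(\int_r^\infty|u'|^pt^{p-1}\,\mathrm dt\Bigr)^{1/p}\le p\|u\|_{L^p_{\alpha_0}}^{p-1}\|u'\|_{L^p_{p-1}}, \] so that $\sup_{r\ge1}|u(r)|$ is controlled by the norm and $u(r)\to0$ as $r\to\infty$.

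For part (a) I would fix $u$ and let $\mu\ge0$ be arbitrary, splitting $\int_0^\infty=\int_0^\delta+\int_\delta^1+\int_1^\infty$. On $[1,\infty)$, since $u$ is bounded and $|u|\le1$ off a set of finite $r^{\alpha_0}$-measure, I bound $\exp_p(\mu|u|^{\frac p{p-1}})$ termwise by $C_\mu|u|^p$ where $|u|\le1$ and by a constant where $|u|>1$, and integrate against $r^\theta\le r^{\alpha_0}$ (here the relation $\theta\le\alpha_0$ is used), obtaining finiteness from $u\in L^p_{\alpha_0}$. The essential point is the region $(0,\delta)$: the local energy $\varepsilon(\delta)^p:=\int_0^\delta|u'|^pt^{p-1}\,\mathrm dt$ tends to $0$ as $\delta\to0$, so the near-origin estimate together with Young's inequality $(a+b)^{\frac p{p-1}}\le(1+\eta)b^{\frac p{p-1}}+C_\eta a^{\frac p{p-1}}$ gives $\mu|u(r)|^{\frac p{p-1}}\le\mu(1+\eta)\varepsilon(\delta)^{\frac p{p-1}}\ln(\delta/r)+C$; choosing $\delta$ so small that $\mu(1+\eta)\varepsilon(\delta)^{\frac p{p-1}}<\theta+1$ bounds the integrand by $\mathrm e^{C}r^{\theta}(\delta/r)^{\mu(1+\eta)\varepsilon(\delta)^{\frac p{p-1}}}$, which is integrable on $(0,\delta)$. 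Thus the arbitrariness of $\mu$ is absorbed by the vanishing of the local energy at the origin, and the remaining integral over the compact interval $[\delta,1]$ is trivially finite.

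For part (b), on the unit sphere $\|u\|_{X^{1,p}_\infty}=1$ we have $B:=\|u'\|_{L^p_{p-1}}\le1$, and I fix $\eta>0$ with $\kappa:=\mu(1+\eta)<\theta+1$. On $(0,1)$ I apply the near-origin estimate with $\rho=1$, controlling the starting value $|u(1)|$ uniformly through the bound of the first paragraph, and Young's inequality yields $\mu|u(r)|^{\frac p{p-1}}\le\kappa B^{\frac p{p-1}}\ln(1/r)+C\le\kappa\ln(1/r)+C$; since $\exp_p(t)\le\mathrm e^{t}$ by Lemma~\ref{lemmaphiN}, the integrand is $\le\mathrm e^{C}r^{\theta-\kappa}$, integrable uniformly because $\kappa<\theta+1$. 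On $[1,\infty)$ the uniform bound $\sup_{r\ge1}|u|\le p^{1/p}$ and the termwise estimate against $r^\theta\le r^{\alpha_0}$ bound the contribution by $C_\mu\|u\|_{L^p_{\alpha_0}}^p\le C_\mu$. All constants depend only on $p,\mu,\eta,\theta,\alpha_0$ and on $A,B\le1$, yielding $d_{p,\mu,\theta}<\infty$.

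For part (c) I would exhibit Moser-type functions concentrating at the origin, namely $u_n(r)=n^{\frac{p-1}p}$ for $r\le\mathrm e^{-n}$, $u_n(r)=n^{-1/p}\ln(1/r)$ for $\mathrm e^{-n}\le r\le1$, and $u_n(r)=0$ for $r\ge1$. A direct computation gives $\|u_n'\|_{L^p_{p-1}}^p=1$ and $\|u_n\|_{L^p_{\alpha_0}}^p\to0$ (using $\alpha_0>-1$), so $N_n:=\|u_n\|_{X^{1,p}_\infty}\to1$ and $\tilde u_n:=u_n/N_n$ lies on the unit sphere. On the plateau $\{r\le\mathrm e^{-n}\}$ one has $\mu|\tilde u_n|^{\frac p{p-1}}=\mu n/N_n^{\frac p{p-1}}$, and invoking the sharp asymptotic $\exp_p(t)\sim\mathrm e^{t}$ (so $\exp_p(t)\ge\tfrac12\mathrm e^{t}$ for large $t$) the contribution is at least $\tfrac12\mathrm e^{\mu n/N_n^{p/(p-1)}}\int_0^{\mathrm e^{-n}}r^\theta\,\mathrm dr=\tfrac1{2(\theta+1)}\mathrm e^{n(\mu/N_n^{p/(p-1)}-(\theta+1))}$, which diverges since $\mu/N_n^{p/(p-1)}\to\mu>\theta+1$. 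I expect the main obstacle to be the bookkeeping in part (b): one must ensure that the exponent produced near the origin is governed precisely by the unit-ball constraint $B^{\frac p{p-1}}\le1$, so that the borderline power $r^{-\kappa}$ is integrable against $r^\theta$ exactly in the subcritical range $\mu<\theta+1$, while simultaneously controlling the non-uniform value $u(1)$ and the tail at infinity; the uniform boundedness and decay on $[1,\infty)$, obtained via the weighted H\"older trick relying on $\alpha_0>-1$ and $\theta\le\alpha_0$, is exactly what renders the tail harmless. For part (c) the delicate point is the lower asymptotic $\exp_p(t)\sim\mathrm e^{t}$, which guarantees that deleting the initial terms of the exponential series does not weaken the blow-up.
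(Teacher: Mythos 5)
Your argument is correct and follows the same underlying route as the paper: split at $r=1$, control the origin by the one-dimensional logarithmic estimate $|u(r)-u(\rho)|\le\|u'\|_{L^p_{p-1}(r,\rho)}(\ln(\rho/r))^{(p-1)/p}$ plus Young's inequality (this is exactly the content of Theorem~\ref{theo01} for $k=1$, which the paper simply cites), control the tail by the radial decay of Lemma~\ref{lemma53} so that $\exp_p(\mu|u|^{p'})\lesssim |u|^p$ there (the paper cites Lemma~\ref{lemma75} for this), and blow up with Moser functions supported in $(0,1)$ for part (c) (the paper reuses the sequence from \cite{Paper1}). The only differences are that your proof is self-contained where the paper's is citation-based, and your tail estimate is slightly more elementary, using only $\sup_{r\ge1}|u|<\infty$ together with $u\in L^p_{\alpha_0}$ rather than the precise power decay rate exploited in Lemma~\ref{lemma75}. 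You are also right that the condition $\theta\le\alpha_0$ is needed for the integral over $(1,\infty)$; the theorem's statement omits it, but the paper's own Lemma~\ref{lemma75} assumes $\theta\in[-1,\alpha_0]$, so this is an imprecision of the statement rather than of your proof.
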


In view of item (b) in Theorem \ref{theo01}, we work on the attainability of item (b) in Theorem \ref{theo0}. Based on M. Ishiwata \cite{MR2854113} and assuming $\theta=\alpha_0$, we are able to prove the attainability for \eqref{eqdpmu} in our next main theorem. Firstly, we need to introduce the interpolation constant similar to that considered by M. Weinstein \cite{MR0691044}:
\begin{equation*}
B_{2,\alpha_0}:=\sup_{u\in X^{1,2}_\infty\backslash\{0\}}\frac{\|u\|^4_{L^4_{\alpha_0}}}{\|u'\|_{L^2_1}^2\|u\|_{L^2_{\alpha_0}}^2}.
\end{equation*}
Moreover, we prove in Lemma \ref{lemma33} that $B_{2,\alpha_0}\geq4/(\alpha_0+1)\mathrm e\log 2$ which is a better estimate than $B_{2,\alpha_0}>2/(\alpha_0+1)$ (see \cite[Proposition 7.1]{MR4097244}).

\begin{theo}\label{theo12}
Let $X^{1,p}_\infty(\alpha_0,p-1)$ be the weighted Sobolev space with $p\geq2$ and $\alpha_0>-1$.
\begin{flushleft}
    $\mathrm{(a)}$ If $p>2$ and $0<\mu<\alpha_0+1,$ then $d_{p,\mu}$ is attained;\\
    $\mathrm{(b)}$ If $p=2$ and $2/B_{2,\alpha_0}<\mu<\alpha_0+1,$  then $d_{p,\mu}$ is attained.
\end{flushleft}
\end{theo}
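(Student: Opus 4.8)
The plan is to apply the direct method and to reduce the attainment of $d_{p,\mu}$ to a single strict inequality. Writing $J(u):=\int_0^\infty\exp_p(\mu|u|^{\frac{p}{p-1}})r^{\alpha_0}\,\mathrm dr$ and expanding the series $\exp_p$ termwise, one obtains
\begin{equation*}
J(u)=\frac{\mu^{p-1}}{\Gamma(p)}\|u\|_{L^p_{\alpha_0}}^p+G(u),\qquad G(u):=\sum_{j=1}^\infty\frac{\mu^{p-1+j}}{\Gamma(p+j)}\|u\|_{L^{q_j}_{\alpha_0}}^{q_j},
\end{equation*}
where $q_j:=p+\frac{jp}{p-1}>p$ for every $j\ge1$. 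The noncompact endpoint embedding $X^{1,p}_\infty\hookrightarrow L^p_{\alpha_0}$ produces a \emph{vanishing level} equal to $\mu^{p-1}/\Gamma(p)$, and the heart of the matter is the strict inequality
\begin{equation*}
d_{p,\mu}>\frac{\mu^{p-1}}{\Gamma(p)}.\tag{$\star$}
\end{equation*}
I would first show that $(\star)$ forces attainment, and then establish $(\star)$ by an explicit scaling family, the two cases $p>2$ and $p=2$ corresponding exactly to (a) and (b).

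Let $(u_n)\subset X^{1,p}_\infty$ be a maximizing sequence with $\|u_n\|_{X^{1,p}_\infty}=1$; by reflexivity extract $u_n\rightharpoonup u_0$. Since $q_j>p$, the compact embeddings $X^{1,p}_\infty\hookrightarrow L^{q_j}_{\alpha_0}$ of Theorem~\ref{theoimersaoinfinito}, together with the uniform tail control discussed below, give $G(u_n)\to G(u_0)$. Applying the Brezis--Lieb lemma to the endpoint term and weak lower semicontinuity of $u\mapsto\|u'\|_{L^p_{p-1}}$ yields, with $t:=\|u_0\|_{X^{1,p}_\infty}$, the bound $\lim_n\|u_n-u_0\|_{L^p_{\alpha_0}}^p\le 1-t^p$, whence
\begin{equation*}
d_{p,\mu}=\lim_n J(u_n)=J(u_0)+\frac{\mu^{p-1}}{\Gamma(p)}\lim_n\|u_n-u_0\|_{L^p_{\alpha_0}}^p\le J(u_0)+\frac{\mu^{p-1}}{\Gamma(p)}\bigl(1-t^p\bigr).
\end{equation*}
If $u_0=0$ this reads $d_{p,\mu}\le\mu^{p-1}/\Gamma(p)$, contradicting $(\star)$. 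If $0<t<1$, the admissible function $u_0/t$ satisfies $J(u_0/t)\ge t^{-p}J(u_0)$ (the endpoint term scales exactly by $t^{-p}$, while each higher term, having $q_j>p$ and $t<1$, scales by $t^{-q_j}\ge t^{-p}$), so $J(u_0)\le t^p d_{p,\mu}$; combined with the display this gives $(1-t^p)d_{p,\mu}\le(1-t^p)\mu^{p-1}/\Gamma(p)$, again contradicting $(\star)$. Therefore $t=1$ and $d_{p,\mu}=J(u_0)$, so $u_0$ is a maximizer.

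To prove $(\star)$ I would test with the rescaled family $u_\lambda(r):=A_\lambda\,\phi(\lambda r)$, where $A_\lambda$ is fixed by $\|u_\lambda\|_{X^{1,p}_\infty}=1$. Since the gradient weight is $r^{p-1}$, one has $\|u_\lambda'\|_{L^p_{p-1}}^p=A_\lambda^p\|\phi'\|_{L^p_{p-1}}^p$ and $\|u_\lambda\|_{L^q_{\alpha_0}}^q=A_\lambda^q\lambda^{-(\alpha_0+1)}\|\phi\|_{L^q_{\alpha_0}}^q$, so in the spreading regime $\lambda\to0^+$ the constraint gives $A_\lambda^p\sim\lambda^{\alpha_0+1}/\|\phi\|_{L^p_{\alpha_0}}^p$. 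Consequently the endpoint loss $\frac{\mu^{p-1}}{\Gamma(p)}(1-\|u_\lambda\|_{L^p_{\alpha_0}}^p)$ is of order $\lambda^{\alpha_0+1}$, while the first higher-order term is of order $\lambda^{(\alpha_0+1)/(p-1)}$. For $p>2$ one has $(\alpha_0+1)/(p-1)<\alpha_0+1$, so the gain dominates the loss as $\lambda\to0^+$ and $(\star)$ holds for every $\mu>0$, proving (a). For $p=2$ both orders equal $\alpha_0+1$ and, since $\Gamma(2)=1$ and $q_1=4$, the sign of $J(u_\lambda)-\mu$ is governed by whether $\mu>2\|u_\lambda'\|_{L^2_1}^2/\|u_\lambda\|_{L^4_{\alpha_0}}^4$; letting $\lambda\to0^+$ this ratio tends to $2\|\phi'\|_{L^2_1}^2\|\phi\|_{L^2_{\alpha_0}}^2/\|\phi\|_{L^4_{\alpha_0}}^4$, which approaches $2/B_{2,\alpha_0}$ as $\phi$ runs through near-optimizers of $B_{2,\alpha_0}$. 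Hence $(\star)$ holds precisely when $\mu>2/B_{2,\alpha_0}$, proving (b); the interval $(2/B_{2,\alpha_0},\alpha_0+1)$ is nonempty by the lower bound on $B_{2,\alpha_0}$ from Lemma~\ref{lemma33}.

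The step I expect to be the main obstacle is the uniform tail control needed to pass from $\|u_n\|_{L^{q_j}_{\alpha_0}}\to\|u_0\|_{L^{q_j}_{\alpha_0}}$ (valid for each fixed $j$) to $G(u_n)\to G(u_0)$ for the full series, equivalently the equi-integrability of $\exp_p(\mu|u_n|^{\frac{p}{p-1}})r^{\alpha_0}$. This is exactly where subcriticality $\mu<\alpha_0+1$ enters: choosing $\mu'\in(\mu,\alpha_0+1)$, Theorem~\ref{theo0}(b) gives $\sup_n\int_0^\infty\exp_p(\mu'|u_n|^{\frac{p}{p-1}})r^{\alpha_0}\,\mathrm dr<\infty$, and a Hölder/convexity comparison between $\exp_p(\mu\,\cdot)$ and $\exp_p(\mu'\,\cdot)$ upgrades this to equi-integrability, so that Vitali's theorem yields $G(u_n)\to G(u_0)$. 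Controlling possible loss of mass at the origin and at infinity for the endpoint term, and making the Brezis--Lieb splitting rigorous in the weighted setting, are the remaining technical points.
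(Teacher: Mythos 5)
Your proposal is correct and reaches the same two structural pillars as the paper --- the identification of the vanishing level $\mu^{p-1}/\Gamma(p)$ and the strict inequality $d_{p,\mu}>\mu^{p-1}/\Gamma(p)$ proved by a spreading family, with the threshold $2/B_{2,\alpha_0}$ emerging for $p=2$ exactly as in Proposition~\ref{ip22} --- but the route from the strict inequality to attainment is genuinely different. The paper runs an Ishiwata-style concentration--compactness scheme: cut-off functions $\varphi_R^0,\varphi_R^\infty$, the quantities $\rho_*,\nu_*,\eta_*$, the normalized vanishing limit of Proposition~\ref{ip21}, and the dichotomy argument of Lemmas~\ref{ilemma23}--\ref{il24} and Proposition~\ref{ip23}, before concluding via Lemma~\ref{lemmacompexp} and Proposition~\ref{propi25}. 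You replace all of this by a Brezis--Lieb splitting of the endpoint $L^p_{\alpha_0}$ norm combined with the dilation inequality $J(u_0/t)\geq t^{-p}J(u_0)$ (valid since every exponent $q_j$ in the series exceeds $p$ and $t<1$), which rules out both $u_0=0$ and $0<\|u_0\|_{X^{1,p}_\infty}<1$ in two lines each; this is shorter and avoids the cut-off bookkeeping, at the price of invoking Brezis--Lieb and a.e.\ convergence in the weighted setting (both unproblematic here, since Theorem~\ref{theo32} gives local compactness). Your treatment of the series tail --- comparing $\exp_p(\mu\,\cdot)$ with $\exp_p(\mu'\,\cdot)$ for $\mu<\mu'<\alpha_0+1$ and using Theorem~\ref{theo0}(b) --- is a valid substitute for the paper's direct radial-decay estimate in Lemma~\ref{lemmacompexp}, and this is the one place where the hypothesis $\mu<\alpha_0+1$ is genuinely used, just as in the paper. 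A further small simplification on your side: by working with near-optimizers of $B_{2,\alpha_0}$ you do not need the attainment result of Proposition~\ref{lemmab2} at all, whereas the paper constructs an actual extremal $u_0$ for the interpolation constant. Both approaches need the compact embedding $X^{1,p}_\infty\hookrightarrow L^{q}_{\alpha_0}$ for $q>p$ from Theorem~\ref{theoimersaoinfinito}, and both correctly isolate the $j=0$ term as the sole source of noncompactness.
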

See also related results in \cite{CLZ-AIM}.

In our next result, we establish the non-existence of extremal functions for \eqref{eqdpmu} assuming $1<p\leq 2$ and $\mu>0$ small.

\begin{theo}\label{theo15}
Let $1<p\leq2$ and $\alpha_0>-1$. If $\mu>0$ is sufficiently small, then $d_{p,\mu}$ is not attained. Moreover, the functional
\begin{equation*}
u\mapsto\int_0^\infty\exp_p\left(\mu|u|^{\frac{p}{p-1}}\right)r^{\alpha_0}\mathrm dr,\quad u\in \mathcal S,
\end{equation*}
does not have a critical point on $\mathcal S=\{u\in X^{1,p}_\infty\colon\|u\|_{X^{1,p}_\infty}=1\}$ for $\mu>0$ small.
\end{theo}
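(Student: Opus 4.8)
The plan is to isolate the lowest-order term of $\exp_p$ and show that, for small $\mu$, the whole supremum is governed by it. Since the leading term of $\exp_p(\mu|u|^{\frac{p}{p-1}})$ is $\mu^{p-1}|u|^{p}/\Gamma(p)$, I would write, for $u\in\mathcal S$,
\[
\int_0^\infty\exp_p\!\big(\mu|u|^{\frac{p}{p-1}}\big)r^{\alpha_0}\mathrm dr=\frac{\mu^{p-1}}{\Gamma(p)}\|u\|_{L^p_{\alpha_0}}^p+T(u),\qquad T(u):=\int_0^\infty\exp_{p+1}\!\big(\mu|u|^{\frac{p}{p-1}}\big)r^{\alpha_0}\mathrm dr,
\]
using the series identity $\exp_p(t)-t^{p-1}/\Gamma(p)=\exp_{p+1}(t)$. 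On $\mathcal S$ one has $\|u\|_{L^p_{\alpha_0}}^p=1-\|u'\|_{L^p_{p-1}}^p<1$ strictly (a nonzero constant cannot lie in $L^p_{\alpha_0}(0,\infty)$ when $\alpha_0>-1$), so the leading term is always strictly below $D_\infty:=\mu^{p-1}/\Gamma(p)$. I would first produce the matching lower bound $d_{p,\mu}\ge D_\infty$ by a spreading sequence $u_A(r):=A\,u(A^{p/(\alpha_0+1)}r)$ with $A\to0$: under $u\mapsto u(\lambda\cdot)$ the term $\|u'\|_{L^p_{p-1}}^p$ is invariant while every $\|u\|^q_{L^q_{\alpha_0}}$ scales by $\lambda^{-(\alpha_0+1)}$, so after renormalization $\|u_A\|_{L^p_{\alpha_0}}^p\to1$, the gradient part and all higher $L^q_{\alpha_0}$ norms ($q>p$) vanish by the compact embedding of Theorem \ref{theoimersaoinfinito}(b), whence $J_\mu(u_A)\to D_\infty$. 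Consequently \emph{non-attainment reduces to the strict inequality}
\[
T(u)<\frac{\mu^{p-1}}{\Gamma(p)}\|u'\|_{L^p_{p-1}}^p\qquad\text{for all }u\in\mathcal S,
\]
since this gives $J_\mu(u)<D_\infty=d_{p,\mu}$ for every admissible $u$.

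\textbf{The core estimate.} Setting $g:=\|u'\|_{L^p_{p-1}}^p\in(0,1]$, the task is to dominate the tail $T(u)$ by $g$. The key tool is a weighted Gagliardo--Nirenberg inequality $\|u\|_{L^q_{\alpha_0}}^q\le C_q\,\|u'\|_{L^p_{p-1}}^{\,q-p}\,\|u\|_{L^p_{\alpha_0}}^{p}$ for $q>p$, which I would obtain from the embedding $X^{1,p}_\infty\hookrightarrow L^q_{\alpha_0}$ by applying it to $u(\lambda\cdot)$ and optimizing in $\lambda$ (the exponents are forced by the scaling above). Then, using $\exp_{p+1}(t)\le \frac{t^{p}}{\Gamma(p+1)}e^{t}$, Hölder's inequality, and the Trudinger--Moser finiteness of Theorem \ref{theo0} to absorb the exponential factor, I would bound $T(u)\le C\mu^{p}g^{\frac{1}{p-1}-\varepsilon}$. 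For $1<p<2$ this already closes the argument: the comparison $C\mu^{p}g^{\frac1{p-1}-\varepsilon}<\frac{\mu^{p-1}}{\Gamma(p)}g$ is equivalent to $C\Gamma(p)\mu<g^{1-\frac1{p-1}+\varepsilon}$, and since the exponent $1-\frac1{p-1}+\varepsilon=\frac{p-2}{p-1}+\varepsilon<0$ for small $\varepsilon$, the right-hand side is $\ge1$ on $g\in(0,1]$, so any $\mu<1/(C\Gamma(p))$ works. The borderline $p=2$ is different: there $\frac1{p-1}=1$ and the $\varepsilon$-loss is fatal, so I would peel off the first tail term exactly, $\frac{\mu^2}{2}\|u\|_{L^4_{\alpha_0}}^4\le\frac{\mu^2}{2}B_{2,\alpha_0}\,g$ (by the very definition of $B_{2,\alpha_0}$ and $\|u\|_{L^2_{\alpha_0}}^2\le1$), and estimate the remaining $O(\mu^3)$ tail by the same GN/Hölder scheme with a factor of $g$; this yields $T(u)\le\big(\tfrac{\mu^2}{2}B_{2,\alpha_0}+O(\mu^3)\big)g<\mu g$ precisely for $0<\mu<2/B_{2,\alpha_0}$, in agreement with the attainment threshold of Theorem \ref{theo12}(b) and the estimate of Lemma \ref{lemma33}.

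\textbf{Absence of critical points.} Non-attainment of the supremum does not by itself exclude other critical points, so I would argue directly along the normalized dilation curve $w_\lambda:=\|u(\lambda\cdot)\|_{X^{1,p}_\infty}^{-1}\,u(\lambda\cdot)\in\mathcal S$, which is an admissible curve on $\mathcal S$ through $u$ at $\lambda=1$. Using $\exp_p'=\exp_{p-1}$ and a single integration by parts (the boundary terms vanishing by the decay of $u\in X^{1,p}_\infty$, justified through a density argument), a computation gives
\[
\frac{d}{d\lambda}\Big|_{\lambda=1}J_\mu(w_\lambda)=(\alpha_0+1)\Big(\tfrac{\mu}{p-1}\,\|u\|_{L^p_{\alpha_0}}^p\,P-J_\mu(u)\Big),\quad P:=\int_0^\infty|u|^{\frac{p}{p-1}}\exp_{p-1}\!\big(\mu|u|^{\frac{p}{p-1}}\big)r^{\alpha_0}\mathrm dr,
\]
whose leading order as $\mu\to0$ is $-\frac{\mu^{p-1}}{\Gamma(p)}\,\|u\|_{L^p_{\alpha_0}}^p\,g<0$. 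Running the same GN/Hölder tail control as above shows this derivative is strictly negative for every $u\in\mathcal S$ once $\mu$ is small, so $J_\mu$ strictly increases under spreading at each point of $\mathcal S$; hence no point of $\mathcal S$ can be critical, which in particular re-proves non-attainment.

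\textbf{Main obstacle.} The delicate point is not the leading-order bookkeeping but the \emph{uniform} control of the tail $T(u)$ by $g$ across the infinite-dimensional sphere $\mathcal S$, where a nearly vanishing $u$ makes both $g$ and the higher $L^q_{\alpha_0}$ norms small simultaneously; the weighted Gagliardo--Nirenberg inequality is exactly what ties these scales together. The genuinely borderline regime is $p=2$, where the favorable gap $\frac1{p-1}>1$ disappears and one is forced to use the sharp constant $B_{2,\alpha_0}$, producing the precise smallness requirement $\mu<2/B_{2,\alpha_0}$; for $1<p<2$ the argument is robust and any sufficiently small $\mu$ suffices. A secondary technical care point is the justification that the boundary contributions in the Pohozaev-type integration by parts indeed vanish for all $u\in X^{1,p}_\infty$.
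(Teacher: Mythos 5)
Your overall architecture is close to the paper's: the paper also works along the normalized dilation curve (its $w_t=v_t/\|v_t\|_{X^{1,p}_\infty}$ with $v_t(r)=t^{1/p}v(t^{1/(\alpha_0+1)}r)$ is, after reparametrization, exactly your $w_\lambda$), computes the derivative at the base point as a series whose $j=0$ term is $-\frac{\mu^{p-1}}{\Gamma(p)}\|v\|^p_{L^p_{\alpha_0}}\|v'\|^p_{L^p_{p-1}}$, and dominates the remaining terms by a weighted Gagliardo--Nirenberg inequality; your derivative formula is correct and is the same quantity up to the factor $\alpha_0+1$. Your non-attainment reduction ($J_\mu(u)<\mu^{p-1}/\Gamma(p)=d_{p,\mu}$ for every $u\in\mathcal S$) is a legitimate alternative to the paper's shortcut (a maximizer on $\mathcal S$ would be a critical point of the curve), and the lower bound $d_{p,\mu}\ge\mu^{p-1}/\Gamma(p)$ is indeed available (Proposition \ref{ip21}).

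The gap is in the core estimate. First, the step ``$\exp_{p+1}(t)\le t^p\mathrm e^t/\Gamma(p+1)$, then H\"older and Theorem \ref{theo0} to absorb the exponential'' does not work as written: any H\"older split that isolates a pure exponential factor leaves you with $\int_0^\infty \mathrm e^{c|u|^{p'}}r^{\alpha_0}\,\mathrm dr$, which is infinite because the constant term of the exponential is not integrable against $r^{\alpha_0}$ on $(0,\infty)$ --- this is the very reason the paper works with the truncated series $\exp_p$. Second, if you instead estimate the tail term by term via the Gagliardo--Nirenberg inequality $\|u\|^q_{L^q_{\alpha_0}}\le C_q\|u'\|^{q-p}_{L^p_{p-1}}\|u\|^p_{L^p_{\alpha_0}}$ at $q=p'(p-1+j)$, you must know how $C_q$ grows in $q$ in order to sum the series against $1/\Gamma(p+j)$; the scaling-and-optimization derivation from the embedding gives no such control. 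Supplying that control is precisely the content of the paper's Theorem \ref{theoa} and Corollary \ref{corgn}, which give $C_q\le C\,\Gamma(q/p'+1)\mu^{-q/p'}$; this cancels $\Gamma(p+j)$ exactly and makes the whole tail $O(\mu)\cdot\|u'\|^p_{L^p_{p-1}}$ with no $\varepsilon$-loss, using $\|u'\|^{p'j-p}_{L^p_{p-1}}\le1$, which holds precisely because $p\le2$ forces $p'\ge p$. Once this is in place your special treatment of $p=2$ via $B_{2,\alpha_0}$ becomes unnecessary --- it is an artifact of the lossy $g^{1/(p-1)-\varepsilon}$ bound --- and the claim that it works ``precisely for $\mu<2/B_{2,\alpha_0}$'' overstates what the $O(\mu^3)$ remainder allows; none of this sharpness is needed for the theorem as stated.
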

\begin{remark}
E. Abreu and L. G. Fernandes Jr \cite{MR4097244} proved Theorem \ref{theo12} with $\alpha_0\geq0$ and $\phi_p$ instead of $\exp_p$ (see Remark \ref{remark1}). Also, they proved Theorem \ref{theo15} assuming $p=2$, $\alpha_0\geq0$, and $\phi_p$ instead of $\exp_p$. We also refer to \cite{LLZ-AIM} for related results in nonradial case.
\end{remark}

The organization of the paper is as follows. In Section \ref{sec2}, we develop a Radial Lemma for unbounded domains (Lemma \ref{lemma53}) to prove Theorem \ref{theoimersaoinfinito} and \ref{theo0}. In Section \ref{sec3}, we work on the attainability of $d_{p,\mu}$. More specifically, we show Theorem \ref{theo12}. We will prove in Section \ref{sec4}, Theorem \ref{theo15} which guarantees that $d_{p,\mu}$ is not attained for $1<p\leq2$ and $\mu>0$ small enough.

\section{Radial Lemma and its consequences}\label{sec2}

Throughout the rest of the paper, we denote $p'=p/(p-1)$ for each $p\in(1,\infty)$.
The following lemma shows that the space of the restriction to the interval $[0,\infty)$ of the smooth functions with compact support in $\mathbb{R}$ denote by  $\Upsilon:=\{u|_{[0,\infty)} : u \in C_0^\infty(\mathbb{R})\}$ is dense in $X^{k,p}_\infty$. This fact will be used to obtain our radial Lemma (Lemma \ref{lemma53}) for $X^{k,p}_\infty$.

\begin{lemma}\label{lemmacompsuppdense}
The space $\Upsilon$ is dense in the weighted Sobolev space $X^{k,p}_\infty$ under the assumptions
$p\geq1$ and $\alpha_{j-1}\geq\alpha_j-p$ for all $j=1,\ldots,k$.
\end{lemma}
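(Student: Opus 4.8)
The plan is to approximate an arbitrary $u\in X^{k,p}_\infty$ by elements of $\Upsilon$ in two stages: first cut off near infinity to reduce to compactly supported functions, then regularize near the origin by translation and mollification. The hypothesis $\alpha_{j-1}\geq\alpha_j-p$ enters decisively in the first stage.

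\emph{Stage 1 (truncation at infinity).} Fix $\zeta\in C^\infty([0,\infty))$ with $\zeta\equiv1$ on $[0,1]$, $\zeta\equiv0$ on $[2,\infty)$, $0\leq\zeta\leq1$, and set $u_R(r):=\zeta(r/R)\,u(r)$. By the Leibniz rule,
\[
(u_R)^{(j)}-\zeta(\cdot/R)\,u^{(j)}=\sum_{i=1}^j\binom{j}{i}R^{-i}\,\zeta^{(i)}(\cdot/R)\,u^{(j-i)},
\]
and every term on the right is supported in $[R,2R]$. Since $r\sim R$ there, I bound $r^{\alpha_j}\leq C\,R^{\alpha_j-\alpha_{j-i}}\,r^{\alpha_{j-i}}$, so that
\[
\big\|R^{-i}\zeta^{(i)}(\cdot/R)u^{(j-i)}\big\|_{L^p_{\alpha_j}}^p\leq C\,R^{-ip+\alpha_j-\alpha_{j-i}}\int_R^{2R}|u^{(j-i)}|^p r^{\alpha_{j-i}}\,\mathrm dr.
\]
Iterating the hypothesis gives $\alpha_j-\alpha_{j-i}\leq ip$, whence the prefactor $R^{-ip+\alpha_j-\alpha_{j-i}}$ stays bounded as $R\to\infty$, while $\int_R^{2R}|u^{(j-i)}|^p r^{\alpha_{j-i}}\,\mathrm dr\to0$ because $u^{(j-i)}\in L^p_{\alpha_{j-i}}$. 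The remaining term $\zeta(\cdot/R)u^{(j)}$ tends to $u^{(j)}$ in $L^p_{\alpha_j}$ by dominated convergence. Summing over $j$ yields $u_R\to u$ in $X^{k,p}_\infty$, so compactly supported functions are dense.

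\emph{Stage 2 (regularization near the origin).} Let $v\in X^{k,p}_\infty$ have support in $[0,M]$; by Proposition \ref{prop31} I may assume $v\in AC^{k-1}_{\mathrm{loc}}$ with $v^{(k)}$ existing a.e. For $\tau>0$ put $v^\tau(r):=v(r+\tau)$, which on $[0,\infty)$ needs only values of $v$ on $[\tau,\infty)$; for a standard mollifier $\rho_\delta$ with $\delta<\tau$ the convolution $v^\tau*\rho_\delta$ is then well defined and $C^\infty$ on $[0,\infty)$, compactly supported, and extends to an element of $C_0^\infty(\mathbb R)$, hence lies in $\Upsilon$. Since mollification commutes with differentiation and translation, $(v^\tau*\rho_\delta)^{(j)}=(v^{(j)})^\tau*\rho_\delta$, and I estimate
\[
\big\|(v^\tau*\rho_\delta)^{(j)}-v^{(j)}\big\|_{L^p_{\alpha_j}}\leq\big\|(v^{(j)})^\tau*\rho_\delta-(v^{(j)})^\tau\big\|_{L^p_{\alpha_j}}+\big\|(v^{(j)})^\tau-v^{(j)}\big\|_{L^p_{\alpha_j}}.
\]
Letting first $\delta\to0$ and then $\tau\to0$ should drive both terms to zero.

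The main obstacle is the continuity of translation and mollification in the weighted spaces $L^p_{\alpha_j}$ near the singular weight at $r=0$, that is, the claims $\|(v^{(j)})^\tau-v^{(j)}\|_{L^p_{\alpha_j}}\to0$ and $\|w*\rho_\delta-w\|_{L^p_{\alpha_j}}\to0$; for unweighted $L^p$ these are classical, but $r^{\alpha_j}$ is neither translation invariant nor bounded near the origin. I would handle this by exploiting that $r^{\alpha_j}$ is a homogeneous (power, hence doubling) weight: first establish the statements for the dense subclass $C_c((0,\infty))$, where supports stay in a fixed compact subinterval on which the weight is comparable to a constant so the classical results apply directly, and then pass to general $w\in L^p_{\alpha_j}$ using a uniform bound for the translation and mollification operators on $L^p_{\alpha_j}$. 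An equivalent route is the substitution $r=\mathrm e^t$, which turns $L^p_{\alpha_j}(0,\infty)$ into an unweighted $L^p(\mathbb R,\mathrm dt)$ and converts dilations into ordinary translations, reducing the weighted continuity to Lebesgue's classical continuity of translation.
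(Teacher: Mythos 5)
Your Stage 1 is essentially the paper's own argument: the authors use the logarithmic cutoff $\eta_n$ (equal to $\log(n\mathrm e/r)$ on $[n,n\mathrm e)$), whose derivatives satisfy $|\eta_n^{(j)}(r)|\le (j-1)!\,r^{-j}$, and then invoke $\alpha_{i-j}\ge\alpha_i-jp$ exactly as you invoke $\alpha_j-\alpha_{j-i}\le ip$; your smooth dilated cutoff $\zeta(\cdot/R)$ is an immaterial variant, and this stage is correct.

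The gap is in Stage 2, and neither of your proposed repairs closes it. The claim you need is $\|w(\cdot+\tau)-w\|_{L^p_{\alpha}}\to0$ for a general compactly supported $w\in L^p_{\alpha}(0,\infty)$, but for $\alpha<0$ the translation operator $w\mapsto w(\cdot+\tau)$ is not even bounded on $L^p_\alpha$: taking $w=\mathbf{1}_{[\tau,\tau+\delta]}$ with $\delta\ll\tau$ gives $\|w\|^p_{L^p_\alpha}\approx\delta\tau^{\alpha}$ while $\|w(\cdot+\tau)\|^p_{L^p_\alpha}=\delta^{\alpha+1}/(\alpha+1)$, so the ratio behaves like $(\delta/\tau)^{\alpha}$ and blows up as $\delta/\tau\to0$. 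This defeats your first repair, since the passage from the dense subclass $C_c((0,\infty))$ to general $w$ requires precisely the uniform operator bound that fails; and negative exponents are squarely within the scope of the lemma (nothing in its hypotheses forces $\alpha_j\ge0$, and $\alpha_0\in(-1,0)$ occurs throughout the paper). Your second repair does not apply either: the substitution $r=\mathrm e^{t}$ turns the dilation $r\mapsto\lambda r$ into a translation in $t$, but your construction uses the additive shift $r\mapsto r+\tau$, which becomes $t\mapsto\log(\mathrm e^{t}+\tau)$; and if you switch to dilations in order to exploit this, the support of $v(\lambda\cdot)$ still reaches down to $r=0$, so no room is created for mollifying across the origin. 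For comparison, the paper disposes of this stage in a single sentence, asserting that each compactly supported $u_n$ can be approximated uniformly in $C^k([0,\infty))$ by elements of $\Upsilon$ --- itself a delicate claim, since functions in $X^{k,p}_\infty$ need not be bounded near $r=0$ --- so the behaviour at the origin is the genuinely nontrivial part of this lemma, and your write-up does not yet supply it.
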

\begin{proof}
For each $n\in\mathbb N$ we define the cut-off $\eta_n\colon[0,\infty)\to\mathbb R$,
\begin{equation*}
\eta_n(r)=\left\{\begin{array}{ll}
     1&\mbox{for }0\leq r< n,  \\
     \log\left(\frac{n\mathrm{e}}{r}\right)&\mbox{for }n\leq r<ne,\\
     0&\mbox{for }r\geq ne.
\end{array}\right.
\end{equation*}
For each $u\in X^{k,p}_\infty$ consider $u_n=\eta_nu$. We are going to prove that
\begin{equation}\label{eqgha}
\int_0^\infty|u_n^{(i)}-u^{(i)}|^pr^{\alpha_i}\mathrm dr\overset{n\to\infty}\longrightarrow0,\quad\forall i=0,\ldots,k.
\end{equation}
Since $0\leq\eta_n\leq1$ and $u\in L^p_{\alpha_0}$ we obtain
\begin{equation*}
\int_0^\infty|u_n-u|^pr^{\alpha_0}\mathrm dr\leq\int_n^\infty|u|^pr^{\alpha_0}\mathrm dr\overset{n\to\infty}\longrightarrow0.
\end{equation*}
This concludes \eqref{eqgha} for $i=0$. Let $i=1,\ldots,k$. By $\alpha_{j-1}\geq\alpha_j-p$ for all $j=1,\ldots,k$ we have
\begin{equation}\label{eqgha1}
\alpha_{i-j}\geq \alpha_i-jp,\quad\forall j=1,\ldots,i.
\end{equation}
Using
\begin{equation*}
\int_0^{\infty}|u_n^{(i)}-u^{(i)}|^pr^{\alpha_i}\mathrm dr=\int_n^{n\mathrm{e}}|(\eta_nu)^{(i)}-u^{(i)}|^pr^{\alpha_i}\mathrm dr+\int_{n\mathrm{e}}^\infty|u^{(i)}|^pr^{\alpha_i}\mathrm dr
\end{equation*}
and
\begin{align*}
\int_n^{n\mathrm{e}}|(\eta_nu)^{(i)}-u^{(i)}|^pr^{\alpha_i}\mathrm dr&=\int_n^{n\mathrm{e}}\left|\sum_{j=0}^i\binom{i}{j}u^{(i-j)}\eta_n^{(j)}-u^{(i)}\right|^pr^{\alpha_i}\mathrm dr\\
&\leq(i+1)^p\int_n^{n\mathrm{e}}\left|u^{(i)}(1-\eta_n)\right|^pr^{\alpha_i}\mathrm dr\\
&\quad+(i+1)^p\sum_{j=1}^i\int_n^{n\mathrm{e}}|u^{(i-j)}\eta_n^{(j)}|^pr^{\alpha_i}\mathrm dr\\
&\leq(i+1)^p\int_n^{\infty}\left|u^{(i)}\right|^pr^{\alpha_i}\mathrm dr\\
&\quad+(i+1)^p\sum_{j=1}^i\int_n^{\infty}|u^{(i-j)}|^p|\eta_n^{(j)}|^pr^{\alpha_i}\mathrm dr
\end{align*}
we notice that \eqref{eqgha} follows if
\begin{equation*}
|\eta_n^{(j)}|^pr^{\alpha_i}\leq Cr^{\alpha_{i-j}},\quad\forall 1\leq j\leq i\leq k\mbox{ and }r\in(n,n\mathrm{e}),
\end{equation*}
where $C>0$ does not depending on $n$ and $r\in(n,n\mathrm{e})$. Using induction we have
\begin{equation*}
\eta_n^{(j)}(r)=(-1)^j(j-1)!r^{-j},\quad\forall r\in(n,n\mathrm{e}),j\in\mathbb N.
\end{equation*}
Therefore, by \eqref{eqgha1},
\begin{equation*}
|\eta_n^{(j)}|^pr^{\alpha_i}=(j-1)!r^{\alpha_i-jp}\leq (j-1)!r^{\alpha_{i-j}},\quad \forall r\in(n,n\mathrm{e}).
\end{equation*}
Since each $u_n$ can be approximated uniformly in $C^k([0,\infty))$ by a function in $\Upsilon=\{u|_{[0,\infty)}\colon u\in C^\infty_0(\mathbb R)\}$ we conclude our lemma.
\end{proof}

In the next lemma, we prove a decaying property for the functions in the space  $X^{1,p}_\infty$
and its proof is based on \cite[Lemma II.1]{MR0683027}. We do not assume $p\geq2$ as in \cite[Lemma 2.3]{MR4097244}.

\begin{lemma}\label{lemma53}
Suppose $p\geq1, \; \alpha_0\geq\alpha_1-p$ and $\alpha_1-p+1\geq0$.
Then for any $u\in X^{1,p}_\infty$ it holds,
\begin{equation*}
|u(r)|\leq p^\frac1p\|u\|_{L^p_{\alpha_0}}^{\frac{p-1}p}\|u'\|_{L^p_{\alpha_1}}^{\frac1p}r^{-\frac{\alpha_0(p-1)+\alpha_1}{p^2}},\quad\forall r\in(0,\infty).
\end{equation*}
\end{lemma}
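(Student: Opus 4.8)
The plan is to establish the pointwise bound first for the dense class $\Upsilon$ and then pass to the limit. Fix $u\in\Upsilon$; since such a $u$ has compact support, $u(s)\to 0$ as $s\to\infty$, so the fundamental theorem of calculus gives
\[
|u(r)|^p=-\int_r^\infty\frac{\mathrm d}{\mathrm ds}|u(s)|^p\,\mathrm ds\leq p\int_r^\infty|u(s)|^{p-1}|u'(s)|\,\mathrm ds.
\]
The whole game is to distribute the weights $s^{\alpha_0},s^{\alpha_1}$ across the two factors and then bound the leftover power of $s$ by a power of $r$. Writing
\[
|u|^{p-1}|u'|=\bigl(|u|^ps^{\alpha_0}\bigr)^{\frac{p-1}{p}}\bigl(|u'|^ps^{\alpha_1}\bigr)^{\frac1p}\,s^{-\beta},\qquad \beta:=\frac{\alpha_0(p-1)+\alpha_1}{p},
\]
H\"older's inequality with conjugate exponents $p/(p-1)$ and $p$ applied to the first two factors will reproduce exactly $\|u\|_{L^p_{\alpha_0}}^{p-1}\|u'\|_{L^p_{\alpha_1}}$, provided the stray factor $s^{-\beta}$ can be controlled.

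The key point — and the step where the hypotheses are used — is that $\beta\geq0$, so that $s^{-\beta}\leq r^{-\beta}$ on the integration range $s\geq r$ and can be pulled out of the integral. Indeed, from $\alpha_1-p+1\geq0$ and $\alpha_0\geq\alpha_1-p$ one computes
\[
\alpha_0(p-1)+\alpha_1\geq(\alpha_1-p)(p-1)+\alpha_1=p(\alpha_1-p+1)\geq0,
\]
so $\beta\geq0$. Pulling $r^{-\beta}$ out and applying H\"older as above yields
\[
|u(r)|^p\leq p\,r^{-\beta}\Bigl(\int_r^\infty|u|^ps^{\alpha_0}\,\mathrm ds\Bigr)^{\frac{p-1}{p}}\Bigl(\int_r^\infty|u'|^ps^{\alpha_1}\,\mathrm ds\Bigr)^{\frac1p}\leq p\,r^{-\beta}\|u\|_{L^p_{\alpha_0}}^{p-1}\|u'\|_{L^p_{\alpha_1}},
\]
and taking $p$-th roots gives the claimed inequality for $u\in\Upsilon$, since $\beta/p=(\alpha_0(p-1)+\alpha_1)/p^2$. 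When $p=1$ the factor $|u|^{p-1}$ is constant and one argues directly, without invoking H\"older.

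It remains to remove the restriction $u\in\Upsilon$. Given $u\in X^{1,p}_\infty$, Lemma \ref{lemmacompsuppdense} (whose hypothesis $\alpha_0\geq\alpha_1-p$ is among our assumptions) furnishes $u_n\in\Upsilon$ with $u_n\to u$ in $X^{1,p}_\infty$. Applying the estimate just proved to the differences $u_n-u_m$ shows that $(u_n)$ is uniformly Cauchy on each compact subset of $(0,\infty)$, hence converges locally uniformly to a continuous function that coincides almost everywhere with the representative of $u$ provided by Proposition \ref{prop31}. Passing to the limit in the inequality for $u_n$ — using the convergence of the two norms — then yields the bound for $u$ at every $r\in(0,\infty)$. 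I expect the only delicate point to be this limiting argument: one must ensure the pointwise estimate survives the passage to the limit, which is precisely why the Cauchy-in-$C_{\mathrm{loc}}$ property extracted from the inequality itself is needed.
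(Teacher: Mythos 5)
Your argument is correct and follows essentially the same route as the paper's: reduce to compactly supported functions via Lemma \ref{lemmacompsuppdense}, differentiate $|u|^p$, absorb the weights by writing the stray factor as $(s/r)^{-\beta}$ with $\beta=[\alpha_0(p-1)+\alpha_1]/p\geq0$ (using exactly the same computation from the two hypotheses), and apply H\"older. Your limiting step is in fact spelled out more carefully than the paper's one-line ``we can assume $u$ has compact support,'' but the underlying idea is identical, so there is nothing further to flag.
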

\begin{proof}
Given $u\in X^{1,p}_\infty$, by Lemma \ref{lemmacompsuppdense}, we can assume that there exists $R>0$ such that $u(r)=0$ for $r>R$. Firstly, we claim that $s\mapsto |u(s)|^p$ is an absolutely continuous function on $(r,\infty)$ for a fixed $r\in(0,\infty)$. Proposition \ref{prop31} implies that $u$ is bounded on $[r,R]$ and using that $s\mapsto s^p$ is a Lipschitz function on $[0,\|u\|_{L^\infty(r,R)}]$, we conclude our claim. For now, suppose $p>1$. Using that $[\alpha_0(p-1)+\alpha_1]/p\geq0$ we have
\begin{align*}
|u(r)|^p&=\int_r^\infty\left(|u(s)|^p\right)'\mathrm ds=\int_r^\infty p|u(s)|^{p-2}u(s)u'(s)\mathrm ds\\
&\leq p\int_r^\infty|u(s)|^{p-1}|u'(s)|\left(\dfrac{s}{r}\right)^{\frac{\alpha_0(p-1)+\alpha_1}{p}}\mathrm ds\\
&\leq p\|u\|_{L^p_{\alpha_0}}^{\frac{p-1}{p}}\|u'\|_{L^p_{\alpha_1}}^{\frac{1}{p}}r^{-\frac{\alpha_0(p-1)+\alpha_1}{p}}.
\end{align*}

The case $p=1$ immediately follows from
\begin{equation*}
|u(r)|=\left|\int_r^\infty u'(s)\mathrm ds\right|\leq\int_r^\infty|u'(s)|\left(\dfrac{s}{r}\right)^{\alpha_1}\mathrm ds=\|u'\|_{L^1_{\alpha_1}}r^{-\alpha_1}.
\end{equation*}
\end{proof}

With the Radial Lemma (Lemma \ref{lemma53}), we can prove Theorem \ref{theoimersaoinfinito} for the first derivative case which is equivalent to the following proposition.

\begin{prop}\label{propimersaoinfinito}
Let $X^{1,p}_\infty(\alpha_0,\alpha_1)$ the weighted Sobolev space such that $p\geq1$ and $\alpha_0\geq\alpha_1-p$. Assume $\alpha_1-p\leq\theta\leq \alpha_0$.
\begin{flushleft}
    $\mathrm{(a)}$ Sobolev case: If $\alpha_1-p+1>0$, then the following continuous embedding holds:
    \begin{equation*}
    X^{1,p}_\infty\hookrightarrow L^q_\theta,\quad p\leq q\leq p^*,
    \end{equation*}
    where
    \begin{equation*}
    p^*:=\dfrac{(\theta+1)p}{\alpha_1-p+1}.
    \end{equation*}
    Moreover, it is a compact embedding if one of the following two conditions is fulfilled:
    \begin{flushleft}
    $\mathrm{(i)}$ $\theta=\alpha_0$ and $p<q<p^*$;\\
    $\mathrm{(ii)}$ $\theta<\alpha_0$ and $p\leq q<p^*$.
    \end{flushleft}
    $\mathrm{(b)}$ Sobolev Limit case: If $\alpha_1-p+1=0$, then the following continuous embedding holds:
    \begin{equation*}
    X^{1,p}_\infty\hookrightarrow L^q_\theta,\quad p\leq q<\infty.
    \end{equation*}
    Moreover, it is compact embedding if one of the following two conditions is fulfilled:
    \begin{flushleft}
    $\mathrm{(i)}$ $\theta=\alpha_0$, $q>p>1$, and $\alpha_0>-1$;\\
    $\mathrm{(ii)}$ $\theta<\alpha_0$ and $q\geq p\geq1$.
    \end{flushleft}
\end{flushleft}
\end{prop}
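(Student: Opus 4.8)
The plan is to establish the continuous embedding first and then bootstrap to compactness, and in both parts I would split the half-line at $r=1$, treating $(0,1)$ and $(1,\infty)$ by completely different mechanisms. On the bounded piece, the finite-interval result Theorem \ref{theo32} (with $R=1$, $k=1$) already gives $\|u\|_{L^q_\theta(0,1)}\le C\|u\|_{X^{1,p}_1}\le C\|u\|_{X^{1,p}_\infty}$ for every admissible exponent (namely $1\le q\le p^*$ in the Sobolev case and $1\le q<\infty$ in the limiting case), since restriction to $(0,1)$ does not increase the norm. This is precisely the region that produces the ceiling $q\le p^*$, i.e. the upper bound is forced by the weight singularity at the origin, while the behaviour at infinity will impose only the lower bound $q\ge p$. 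On the unbounded piece I would instead exploit the pointwise decay from the Radial Lemma (Lemma \ref{lemma53}): setting $\gamma:=\frac{\alpha_0(p-1)+\alpha_1}{p^2}\ge 0$ (nonnegativity follows from $\alpha_0\ge\alpha_1-p$ together with $\alpha_1-p+1\ge 0$), the lemma yields $|u(r)|\le C\|u\|_{X^{1,p}_\infty}\,r^{-\gamma}$ for all $r>0$.

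For the infinity estimate I would factor $|u|^q=|u|^p\,|u|^{q-p}$ (this is exactly where $q\ge p$ enters) and insert the decay bound into the second factor:
\[
\int_1^\infty|u|^q r^\theta\,\mathrm dr\le C\|u\|_{X^{1,p}_\infty}^{q-p}\int_1^\infty|u|^p r^{\theta-\gamma(q-p)}\,\mathrm dr .
\]
Since $\theta\le\alpha_0$ and $\gamma(q-p)\ge 0$, on $(1,\infty)$ one has $r^{\theta-\gamma(q-p)}\le r^{\alpha_0}$, so the last integral is bounded by $\|u\|_{L^p_{\alpha_0}}^p\le\|u\|_{X^{1,p}_\infty}^p$. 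Combined with the $(0,1)$ estimate this gives the continuous embedding for all $p\le q\le p^*$ (resp. $p\le q<\infty$), and it makes transparent that no upper restriction on $q$ is generated at infinity, so $p^*$ is dictated solely by the origin. Note that both tools apply to a general $u\in X^{1,p}_\infty$, so no density reduction is needed here.

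For compactness let $(u_n)$ be bounded in $X^{1,p}_\infty$. On each bounded interval $(0,M)$ the finite-interval compact embedding of Theorem \ref{theo32} (valid exactly when $q<p^*$ in the Sobolev case, and for all $q$ in the limiting case) applies; extracting convergent subsequences and diagonalizing over $M\in\mathbb N$ produces a subsequence, still written $(u_n)$, and a limit $u$ with $u_n\to u$ in $L^q_\theta(0,M)$ for every $M$ — this sidesteps reflexivity and so covers $p=1$. It then remains to bound the tail uniformly; repeating the computation above gives
\[
\int_M^\infty|u_n|^q r^\theta\,\mathrm dr\le C\|u_n\|_{X^{1,p}_\infty}^{q-p}\int_M^\infty|u_n|^p r^{\theta-\gamma(q-p)}\,\mathrm dr\le C\,M^{-\delta}\,\|u_n\|_{X^{1,p}_\infty}^{q},\qquad \delta=\alpha_0-\theta+\gamma(q-p).
\]
In case (ii), $\theta<\alpha_0$, we have $\delta>0$ whether or not $q=p$; in case (i), $\theta=\alpha_0$, we need $\delta=\gamma(q-p)>0$, which follows from $q>p$ together with $\gamma>0$ — and $\gamma>0$ holds automatically in the Sobolev case (there $\alpha_0>-1$ and $\alpha_1>p-1$ force $\alpha_0(p-1)+\alpha_1>0$) and, in the limiting case, precisely under the stated hypotheses $p>1$ and $\alpha_0>-1$. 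Hence the tail is $O(M^{-\delta})$ uniformly in $n$; choosing $M$ large and then using the interior convergence on $(0,M)$ yields $u_n\to u$ in $L^q_\theta(0,\infty)$.

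The main obstacle I anticipate is not any single estimate but the correct division of labour between the two regions. One must recognize that the Radial Lemma gives no information near the origin (its bound blows up as $r\to0$), so the near-origin integrability — and with it the critical exponent $p^*$ — has to be imported wholesale from the finite-interval theorem, whereas the decay at infinity is what simultaneously permits arbitrarily large $q$ there and, through the exponent $\gamma$, drives the uniform tail control needed for compactness. Keeping the bookkeeping of $\gamma$, $\delta$, and the borderline value $\gamma=0$ straight — which is exactly what distinguishes compactness cases (i) and (ii) and accounts for the supplementary hypotheses $q>p$, $\alpha_0>-1$, $p>1$ — is the delicate point.
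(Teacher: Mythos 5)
Your proposal is correct and follows essentially the same route as the paper: split the half-line, import the near-origin estimate (and the exponent $p^*$) from Theorem \ref{theo32} on a finite interval, use the decay of Lemma \ref{lemma53} to get the tail bound $\int_R^\infty|u|^qr^\theta\,\mathrm dr\leq CR^{-\delta}\|u\|_{X^{1,p}_\infty}^q$ with $\delta=(q-p)\frac{\alpha_0(p-1)+\alpha_1}{p^2}+\alpha_0-\theta$, and verify $\delta>0$ case by case for compactness. The only (harmless) difference is that you establish compactness by diagonal extraction of subsequences rather than by showing weakly null sequences converge strongly, which if anything is slightly cleaner for $p=1$.
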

\begin{proof}
We first prove the continuous embeddings.
Let $q\geq p$ and $R\in(1,\infty)$ be fixed. Given $r\geq R$, by Lemma \ref{lemma53} we have
\begin{equation*}
|u(r)|^{q-p}\leq \dfrac{C}{r^{(q-p)\frac{\alpha_0(p-1)+\alpha_1}{p^2}}}\|u\|_{X^{1,p}_\infty}^{q-p}\leq \dfrac{C}{R^{(q-p)\frac{\alpha_0(p-1)+\alpha_1}{p^2}}}\|u\|_{X^{1,p}_\infty}^{q-p}.
\end{equation*}
Multiplying both sides by $|u(r)|^pr^\theta$ and then integrating over $(R,\infty)$,
\begin{equation}\label{eqinfinito}
\int_R^\infty|u|^qr^\theta\leq\dfrac{C}{R^{(q-p)\frac{\alpha_0(p-1)+\alpha_1}{p^2}+\alpha_0-\theta}}\|u\|^{q-p}_{X^{1,p}_\infty}\int_R^\infty|u|^pr^{\alpha_0}\mathrm dr.
\end{equation}
By Theorem \ref{theo32} (here we used $\theta\geq\alpha_1-p$) we get $\|u\|_{L^q_\theta(0,R)}\leq C\|u\|_{X^{1,p}_\infty}$ and from \eqref{eqinfinito} we conclude the continuous embedding.

We are left with the task of proving the compact embeddings.
We only need to show that $u_n\to0$ in $L^q_\theta(0,\infty)$ for any $u_n\rightharpoonup0$ in $X^{1,p}_\infty$. We claim that
\begin{equation}\label{eqq0}
(q-p)\dfrac{\alpha_0(p-1)+\alpha_1}{p^2}+\alpha_0-\theta>0.
\end{equation}
Indeed, we can suppose $\theta=\alpha_0$ since the case $\theta<\alpha_0$ is trivial. We state that $[\alpha_0(p-1)+\alpha_1]/p^2>0$ in both cases (Sobolev and Sobolev Limit). In fact, in the case $\alpha_1-p+1>0$, by $\alpha_0\geq\alpha_1-p$, we have
\begin{equation*}
\dfrac{\alpha_0(p-1)+\alpha_1}{p^2}\geq\dfrac{\alpha_1-p+1}p>0.
\end{equation*}
On the other hand, if $\alpha_1-p+1=0$, $p>1$ and $\alpha_0>-1$, then
\begin{equation*}
\dfrac{\alpha_0(p-1)+\alpha_1}{p^2}=\dfrac{(\alpha_0+1)(p-1)}{p^2}>0,
\end{equation*}
which concludes \eqref{eqq0}. By \eqref{eqinfinito}, \eqref{eqq0} and since $(u_n)$ is bounded in $X^{1,p}_\infty$, given $\varepsilon>0$ there exists $R_0>0$ such that
\begin{equation*}
\int_{R_0}^\infty|u_n|^qr^\theta\mathrm dr<\dfrac{\varepsilon}2,\quad\forall n\in\mathbb N.
\end{equation*}
From compact embedding of Theorem \ref{theo32}, there exists $n_0\in\mathbb N$ with
\begin{equation*}
\int_0^{R_0}|u_n|^qr^\theta\mathrm dr<\dfrac{\varepsilon}2,\quad\forall n\geq n_0.
\end{equation*}
Therefore $u_n\to0$ in $L^q_\theta(0,\infty)$.
\end{proof}

\begin{proof}[Proof of Theorem \ref{theoimersaoinfinito}]
The proof follows by induction on $k$. The case $k=1$ follows from Proposition \ref{propimersaoinfinito}. By  $\alpha_j\geq\alpha_k-(k-j)p$ for all $j=1,\ldots,k-1$ and $\alpha_k-(k-1)p+1>0$ we have the following continuous embedding
\begin{equation*}
u'\in X^{k-1,p}_\infty(\alpha_1,\ldots,\alpha_k)\hookrightarrow L^p_{\alpha_k-(k-1)p}.
\end{equation*}
Thus, $u\in X^{1,p}_\infty(\alpha_0,\alpha_k-(k-1)p)$. Using Proposition \ref{propimersaoinfinito}, given $\alpha_k-kp\leq\theta\leq\alpha_0$,
\begin{equation}\label{pik1}
X^{1,p}_\infty(\alpha_0,\alpha_k-(k-1)p)\hookrightarrow L^q_\theta,
\end{equation}
where $p\leq q\leq p^*$ in Sobolev case and $p\leq q<\infty$ in Sobolev Limit case.

For the compact embedding, we can see that (in both Sobolev and Sobolev Limit cases) \eqref{pik1} is a compact embedding under conditions $(i)$ or $(ii)$. Therefore, we obtain the compact embedding $X^{k,p}_\infty\hookrightarrow L^q_\theta$ from the composition of continuous embedding with a compact embedding.
\end{proof}

Before we prove Theorem \ref{theo0}, we need the following lemma which provides estimates on the term $\int_R^\infty\exp_p(\mu|u|^{p'})r^\theta\mathrm dr$.

\begin{lemma}\label{lemma75}
Let $X^{1,p}_\infty(\alpha_0,p-1)$ be the weighted Sobolev space such that $\alpha_0>-1$ and $p>1$. If $R\in(0,\infty)$, $\mu>0$, $\theta\in[-1,\alpha_0]$ and $u\in X^{1,p}_\infty$, then $\exp_p(\mu|u|^{p'})\in L^1_\theta(R,\infty)$. Moreover, if $\|u\|_{X^{1,p}_\infty}\leq M$ then $\int_R^\infty \exp_p(\mu|u|^{p'})r^\theta\mathrm dr\leq C$, where $C$ depends only on $\alpha_0,p,R,\mu,\theta$ and $M$.
\end{lemma}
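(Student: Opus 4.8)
The plan is to reduce everything to the elementary fact that, for a bounded argument, $\exp_p(t)$ is comparable to its leading term $t^{p-1}/\Gamma(p)$, combined with the pointwise decay furnished by the Radial Lemma. First I would check that Lemma~\ref{lemma53} applies to $X^{1,p}_\infty(\alpha_0,p-1)$: with $\alpha_1=p-1$ one has $\alpha_0\geq\alpha_1-p=-1$ and $\alpha_1-p+1=0\geq0$, so for every $u\in X^{1,p}_\infty$ and all $r>0$,
\begin{equation*}
|u(r)|\leq p^{1/p}\|u\|_{L^p_{\alpha_0}}^{(p-1)/p}\|u'\|_{L^p_{p-1}}^{1/p}\,r^{-\gamma},\qquad \gamma:=\frac{(p-1)(\alpha_0+1)}{p^2}>0.
\end{equation*}
Since each of $\|u\|_{L^p_{\alpha_0}}$ and $\|u'\|_{L^p_{p-1}}$ is bounded by $\|u\|_{X^{1,p}_\infty}\leq M$, this gives $|u(r)|\leq p^{1/p}M\,r^{-\gamma}$, hence $\mu|u(r)|^{p'}\leq \mu(p^{1/p}M)^{p'}r^{-\gamma p'}$ with $\gamma p'=(\alpha_0+1)/p>0$. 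In particular $\mu|u(r)|^{p'}\leq T_0:=\mu(p^{1/p}M)^{p'}R^{-(\alpha_0+1)/p}$ for all $r\geq R$, so on the tail the argument of $\exp_p$ remains in the bounded range $[0,T_0]$.

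Next I would exploit the structure of $\exp_p$. Writing $\exp_p(t)=t^{p-1}\sum_{j\geq0}t^j/\Gamma(p+j)$, the factor $\sum_{j\geq0}t^j/\Gamma(p+j)$ is increasing in $t\geq0$, so $\exp_p(t)\leq C(T_0)\,t^{p-1}$ on $[0,T_0]$ with $C(T_0)=\sum_{j\geq0}T_0^j/\Gamma(p+j)$. Applying this with $t=\mu|u|^{p'}$ and using the crucial exponent identity $(p-1)p'=p$, I obtain for every $r\geq R$
\begin{equation*}
\exp_p\!\big(\mu|u(r)|^{p'}\big)\leq C(T_0)\,\mu^{p-1}|u(r)|^{(p-1)p'}=C(T_0)\,\mu^{p-1}|u(r)|^{p}.
\end{equation*}
It remains to integrate against $r^\theta$. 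Because $\theta\leq\alpha_0$, on $[R,\infty)$ one has $r^\theta=r^{\alpha_0}r^{\theta-\alpha_0}\leq R^{\theta-\alpha_0}r^{\alpha_0}$, whence
\begin{equation*}
\int_R^\infty\exp_p\!\big(\mu|u|^{p'}\big)r^\theta\,\mathrm dr\leq C(T_0)\,\mu^{p-1}R^{\theta-\alpha_0}\int_R^\infty|u|^p r^{\alpha_0}\,\mathrm dr\leq C(T_0)\,\mu^{p-1}R^{\theta-\alpha_0}M^p,
\end{equation*}
since $\int_R^\infty|u|^pr^{\alpha_0}\,\mathrm dr\leq\|u\|_{L^p_{\alpha_0}}^p\leq M^p$. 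This finiteness yields $\exp_p(\mu|u|^{p'})\in L^1_\theta(R,\infty)$ (take $M=\|u\|_{X^{1,p}_\infty}$ for the first assertion), and the resulting constant depends only on $\alpha_0,p,R,\mu,\theta,M$ through $T_0$, as claimed.

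The heart of the matter is the interplay in the middle step: the bounded-argument estimate $\exp_p(t)\leq C(T_0)t^{p-1}$ together with $(p-1)p'=p$ is exactly what converts the exponential-type integrand into the $L^p_{\alpha_0}$-integrable density $|u|^p r^{\alpha_0}$. I expect the only genuine subtlety to be keeping the argument of $\exp_p$ uniformly bounded on the whole tail $[R,\infty)$; this is precisely where the sign $\gamma p'=(\alpha_0+1)/p>0$ (equivalently $\alpha_0>-1$) enters, and it is also the reason the statement is confined to $(R,\infty)$ rather than all of $(0,\infty)$, since near the origin $|u|$ may be unbounded and $\exp_p$ can grow without the power-law control.
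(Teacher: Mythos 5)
Your proof is correct, but it takes a genuinely different and somewhat leaner route than the paper's. The paper expands $\exp_p(\mu|u|^{p'})$ as a series, treats the $j=0$ and $j=1$ terms via the continuous embedding $X^{1,p}_\infty\hookrightarrow L^q_\theta$ of Proposition \ref{propimersaoinfinito}, and only invokes the Radial Lemma \ref{lemma53} to control the tail terms $j\geq 2$, summing the resulting series against $1/\Gamma(p+j)$. You instead use Lemma \ref{lemma53} once, to show the \emph{argument} $\mu|u(r)|^{p'}$ is uniformly bounded by $T_0$ on $[R,\infty)$, and then absorb the entire series into the single bound $\exp_p(t)\leq C(T_0)\,t^{p-1}$, which via $(p-1)p'=p$ reduces everything to the $L^p_{\alpha_0}$ norm of $u$. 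This buys you two things: you bypass the embedding theorem entirely (only the Radial Lemma and the definition of the $X^{1,p}_\infty$ norm are needed), and your argument in fact never uses the lower bound $\theta\geq-1$, only $\theta\leq\alpha_0$, so it proves a marginally more general statement. The paper's term-by-term approach, on the other hand, is the one that generalizes directly to the refined tail estimates needed later (e.g.\ in the proof of Proposition \ref{ip21} and Lemma \ref{lemmacompexp}, where the $j\geq2$ part of the series must be isolated and shown to vanish as $R\to\infty$), which is presumably why the authors set it up that way. All the individual steps you give check out: the hypotheses of Lemma \ref{lemma53} hold with $\alpha_1=p-1$, the decay exponent satisfies $\gamma p'=(\alpha_0+1)/p>0$ precisely because $\alpha_0>-1$, and the final constant depends only on the permitted parameters.
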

\begin{proof}
Our proof is based in \cite[Lemma 3.2]{MR3670473}. Let $u\in X^{1,p}_\infty(\alpha_0,p-1)$ such that $\|u\|_{X^{1,p}_\infty}\leq M$. From Proposition \ref{propimersaoinfinito}, we have $u\in L^q_\theta(0,\infty)$ for any $q\in[p,\infty)$. The monotone convergence theorem ensures
\begin{align*}
\int_R^\infty \exp_p(\mu|u|^{p'})r^\theta\mathrm dr&=\sum_{j=0}^\infty\dfrac{\mu^{p-1+j}}{\Gamma(p+j)}\int_R^{\infty}|u|^{(p-1+j)p'}r^\theta\mathrm dr\\
&=\dfrac{\mu^{p-1}}{\Gamma(p)}\|u\|_{L^{p}_\theta}^{p}+\dfrac{\mu^{p}}{\Gamma(p+1)}\|u\|_{L^{pp'}_\theta}^{pp'}+\sum_{j=2}^\infty\dfrac{\mu^{p-1+j}}{\Gamma(p+j)}\int_R^{\infty}|u|^{(p-1+j)p'}r^\theta\mathrm dr.
\end{align*}
Note that $pp'\geq p$. Then, Proposition \ref{propimersaoinfinito} implies $\|u\|_{L^q_\theta}\leq C_q \|u\|_{X^{1,p}_\infty}$ for each $q\in[p,\infty)$.
\begin{align}
\int_R^\infty \exp_p(\mu|u|^{p'})r^\theta\mathrm dr&\leq C_p^p\dfrac{\mu^{p-1}}{\Gamma(p)}\|u\|_{X^{1,p}_\infty}^{p}+C_{pp'}^{pp'}\dfrac{\mu^{p}}{\Gamma(p+1)}\|u\|_{X^{1,p}_\infty}^{pp'}\nonumber\\
&\quad+\sum_{j=2}^\infty\dfrac{\mu^{p-1+j}}{\Gamma(p+j)}\int_R^{\infty}|u|^{(p-1+j)p'}r^\theta\mathrm dr.\label{eq45}
\end{align}
Now it is enough to estimate the last term in \eqref{eq45}. For this, we use to Lemma \ref{lemma53} in the following way
\begin{equation*}
\int_R^\infty |u|^{(p-1+j)p'}r^\theta\mathrm dr\leq C^{(p-1+j)p'}\|u\|_{X^{1,p}_\infty}^{(p-1+j)p'}\int_R^\infty r^{\theta-(p-1+j)\frac{\alpha_0+1}{p}}\mathrm dr.
\end{equation*}
Since $p-1+j\geq p+1$ and $r\geq R$ we have \begin{equation*}r^{\theta-(p-1+j)\frac{\alpha_0+1}p}\leq R^{(j-2)\frac{\alpha_0+1}{p}}r^{\theta-(p+1)\frac{\alpha_0+1}p}.\end{equation*} Thus,
\begin{align*}
\int_R^\infty |u|^{(p-1+j)p'}r^\theta\mathrm dr&\leq C^{(p-1+j)p'}\|u\|_{X^{1,p}_\infty}^{(p-1+j)p'}R^{(j-2)\frac{\alpha_0+1}{p}}\int_R^{\infty}r^{\theta-(p+1)\frac{\alpha_0+1}p}\mathrm dr\\
&=\dfrac{pC^{(p-1+j)p'}R^{(j-p-3)\frac{\alpha_0+1}p+\theta+1}}{(p+1)(\alpha_0+1)-p\theta-p}\|u\|_{X^{1,p}_\infty}^{(p-1+j)p'}.
\end{align*}
Using in \eqref{eq45} we have
\begin{align*}
\int_R^\infty \exp_p(\mu|u|^{p'})r^\theta\mathrm dr&\leq C_p^p\dfrac{\mu^{p-1}}{\Gamma(p)}\|u\|_{X^{1,p}_\infty}^{p}+C_{pp'}^{pp'}\dfrac{\mu^{p}}{\Gamma(p+1)}\|u\|_{X^{1,p}_\infty}^{pp'}\nonumber\\
&\quad+\dfrac{pR^{\theta+1-2(p+1)\frac{\alpha_0+1}{p}}}{(p+1)(\alpha_0+1)-p\theta-p}\sum_{j=2}^\infty\dfrac{\left(\mu C^{p'}R^{\frac{\alpha_0+1}p}\|u\|_{X^{1,p}_\infty}^{p'}\right)^{p-1+j}}{\Gamma(p+j)},
\end{align*}
which gives our estimate.
\end{proof}

\begin{proof}[Proof of Theorem \ref{theo0}]
Splitting the integral in the following way
\begin{equation*}
\int_0^\infty\exp_p(\mu|u|^{p'})r^{\theta}\mathrm dr=\int_0^1\exp_p(\mu|u|^{p'})r^{\theta}\mathrm dr+\int_1^\infty\exp_p(\mu|u|^{p'})r^{\theta}\mathrm dr
\end{equation*}
and using Theorem \ref{theo01} with Lemma \ref{lemma75} we conclude our Theorem \ref{theo0}. Note that the sequence in the proof of item (c) in Theorem \ref{theo01} belongs to $X^{1,p}_{0,1}$.
\end{proof}

\section{Proof of Theorem \ref{theo12}}\label{sec3}

Initially, we should evaluate the behavior of $\exp_p$ when $p$ changes. This is important for the Remark \ref{remark1} and for the estimate $\exp_p(t)\leq \mathrm{e}^t$. The following lemma guarantees this behavior.

\begin{lemma}\label{lemmaphiN}
Let $p\in [1,\infty)$. Then
\begin{equation}\label{stack}
\exp_p(t)=\mathrm{e}^t-\mathrm{e}^t\dfrac{p-1}{\Gamma(p)}\int_t^\infty s^{p-2}\mathrm{e}^{-s} \mathrm ds, \quad\forall t\in(0,\infty).
\end{equation}
Given $1\leq q<p$ we have $\exp_p(t)<\exp_q(t)$ for all $t\in(0,\infty)$. In particular, $\exp_p(t)\leq \mathrm{e}^t$ for all $t\in[0,\infty)$.
\end{lemma}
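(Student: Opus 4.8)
The plan is to first establish the integral identity \eqref{stack} by recognizing that $\exp_p$ satisfies a simple first-order linear ODE, and then to read off the monotonicity in $p$ from the resulting integral representation.

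\emph{Step 1 (a differential recursion).} Since the series defining $\exp_p$ converges locally uniformly on $(0,\infty)$ together with its term-by-term derivative, I would differentiate under the sum. Using the elementary identity $\tfrac{p-1+j}{\Gamma(p+j)}=\tfrac1{\Gamma(p-1+j)}$, this gives $\exp_p'(t)=\sum_{j\ge0}\tfrac{t^{p-2+j}}{\Gamma(p-1+j)}=\exp_{p-1}(t)$. Reindexing that last series, by splitting off its $j=0$ term $t^{p-2}/\Gamma(p-1)$, yields $\exp_{p-1}(t)=\exp_p(t)+\tfrac{t^{p-2}}{\Gamma(p-1)}$. Combining the two relations produces, for $p>1$, the linear ODE $\exp_p'(t)-\exp_p(t)=t^{p-2}/\Gamma(p-1)$.

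\emph{Step 2 (solving the ODE).} Multiplying by the integrating factor $\mathrm e^{-t}$ gives $\tfrac{\mathrm d}{\mathrm dt}\bigl(\mathrm e^{-t}\exp_p(t)\bigr)=t^{p-2}\mathrm e^{-t}/\Gamma(p-1)$. I would integrate from $\varepsilon$ to $t$ and let $\varepsilon\to0^+$: here $p>1$ forces $\exp_p(\varepsilon)=O(\varepsilon^{p-1})\to0$, and $\int_0 s^{p-2}\mathrm e^{-s}\,\mathrm ds$ converges because $p-2>-1$, so the boundary term at $0$ vanishes and one obtains $\mathrm e^{-t}\exp_p(t)=\tfrac1{\Gamma(p-1)}\int_0^t s^{p-2}\mathrm e^{-s}\,\mathrm ds$. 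Writing $\int_0^t=\Gamma(p-1)-\int_t^\infty$ and using $\tfrac1{\Gamma(p-1)}=\tfrac{p-1}{\Gamma(p)}$ turns this into exactly \eqref{stack}. The case $p=1$ is immediate, since $\exp_1(t)=\sum_{j\ge0}t^j/j!=\mathrm e^t$ and the factor $(p-1)$ kills the correction term (the integral $\int_t^\infty s^{-1}\mathrm e^{-s}\,\mathrm ds$ being finite for $t>0$).

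\emph{Step 3 (monotonicity and the bound $\exp_p\le\mathrm e^t$).} From Step 2, for $p>1$ and $t>0$ we may write $\exp_p(t)=\mathrm e^t\,G(p-1,t)$, where $G(r,t):=\tfrac1{\Gamma(r)}\int_0^t s^{r-1}\mathrm e^{-s}\,\mathrm ds$, so it suffices to show that $r\mapsto G(r,t)$ is strictly decreasing on $(0,\infty)$. For $0<r_1<r_2$ I would consider the quantity $D:=\int_0^t s^{r_1-1}\mathrm e^{-s}\,\mathrm ds\,\Gamma(r_2)-\int_0^t s^{r_2-1}\mathrm e^{-s}\,\mathrm ds\,\Gamma(r_1)$, whose sign equals that of $G(r_1,t)-G(r_2,t)$, and express it as a double integral over $(0,t)\times(0,\infty)$ of $s^{r_1-1}u^{r_1-1}\bigl(u^{r_2-r_1}-s^{r_2-r_1}\bigr)\mathrm e^{-s-u}$. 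Splitting the domain as $(0,t)\times(0,t)$ together with $(0,t)\times(t,\infty)$, the integrand is antisymmetric under $s\leftrightarrow u$ on the square $(0,t)^2$ and hence integrates to $0$ there, while on $(0,t)\times(t,\infty)$ one has $u>t>s$, so $u^{r_2-r_1}-s^{r_2-r_1}>0$ and the integrand is strictly positive; therefore $D>0$. Taking $r_1=q-1<r_2=p-1$ gives $\exp_q(t)>\exp_p(t)$ for $1<q<p$, while the remaining case $q=1$ follows directly from \eqref{stack}, which shows $\mathrm e^t-\exp_p(t)>0$ for every $p>1$. In particular the bound $\exp_p(t)\le\mathrm e^t$ is the special case $q=1$.

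The conceptual crux is the opening observation that $\exp_p$ solves the ODE $\exp_p'-\exp_p=t^{p-2}/\Gamma(p-1)$ (equivalently, that $\exp_p$ is $\mathrm e^t$ times a regularized lower incomplete gamma function), and the antisymmetrization trick that delivers monotonicity in the parameter without differentiating in $p$; the analytic justifications, namely local uniform convergence permitting term-by-term differentiation and the vanishing of the boundary term at $0^+$, are routine once $p>1$ is used.
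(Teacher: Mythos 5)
Your proof is correct. For the identity \eqref{stack} your route is essentially the paper's in a different order: the paper defines $f_p(t)=\exp_p(t)-\mathrm e^t+\mathrm e^t\tfrac{p-1}{\Gamma(p)}\int_t^\infty s^{p-2}\mathrm e^{-s}\,\mathrm ds$, checks $f_p(0)=0$ and $f_p'=f_p$ (which secretly uses the same term-by-term recursion $\exp_p'(t)=\exp_p(t)+t^{p-2}/\Gamma(p-1)$ you derive), and concludes $f_p\equiv0$; you instead derive that ODE first and solve it forward with the integrating factor, which is constructive rather than verificational but the same underlying mechanism. Where you genuinely diverge is the strict inequality $\exp_p<\exp_q$: the paper studies the ratio $g_{p,q}=\exp_p/\exp_q$, computes its limits $0$ at the origin and $1$ at infinity via \eqref{stack}, and shows $g_{p,q}'>0$ by comparing the series coefficients of $\tfrac{p-1}{\Gamma(p)}t^{p-2}\exp_q(t)-\tfrac{q-1}{\Gamma(q)}t^{q-2}\exp_p(t)$ term by term; you instead rewrite $\exp_p(t)=\mathrm e^t G(p-1,t)$ with $G$ the regularized lower incomplete gamma function and prove $r\mapsto G(r,t)$ is strictly decreasing by an antisymmetrization of a double integral over $(0,t)\times(0,\infty)$. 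Your argument is arguably cleaner (it exploits \eqref{stack} directly, handles $q=1$ as an immediate special case, and avoids the coefficient comparison, which in the paper's display carries some typographical slips); the paper's ratio argument yields the extra qualitative information that $\exp_p/\exp_q$ increases monotonically from $0$ to $1$, which is what underlies the limiting behaviour discussed in Remark 2. Both are complete proofs.
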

\begin{proof}
Firstly, let us prove \eqref{stack}. For $p=1$ is trivial. If $p>1$ we define $f_p\colon [0,\infty)\to\mathbb R$ given by
\begin{equation*}
f_p(t)=\exp_p(t)-\mathrm{e}^t+\mathrm{e}^t\dfrac{p-1}{\Gamma(p)}\int_t^\infty s^{p-2}\mathrm{e}^{-s}\mathrm ds.
\end{equation*}
Note that
\begin{equation*}
f_p(0)=-1+\dfrac{p-1}{\Gamma(p)}\Gamma(p-1)=0
\end{equation*}
and
\begin{align*}
f_p'(t)&=\dfrac{p-1}{\Gamma(p)}t^{p-2}+\exp_p(t)-\mathrm{e}^t+\dfrac{p-1}{\Gamma(p)}\left[\mathrm{e}^t\int_t^\infty s^{p-2}\mathrm{e}^{-s}\mathrm ds-\mathrm{e}^tt^{p-2}\mathrm{e}^{-t}\right]\\
&=\exp_p(t)-\mathrm{e}^t+\mathrm{e}^t\dfrac{p-1}{\Gamma(p)}\int_t^\infty s^{p-2}\mathrm{e}^{-s}\mathrm ds\\
&=f_p(t).
\end{align*}
Hence $f_p\equiv0$, which concludes \eqref{stack}.

Now given $1\leq q<p$, we define $g_{p,q}\colon(0,\infty)\to\mathbb R$ by
\begin{equation*}
g_{p,q}(t)=\dfrac{\exp_p(t)}{\exp_q(t)}.
\end{equation*}
We have  $g_{p,q}(t)>1$ as an immediate consequence of
\begin{enumerate}
    \item[(i)] $\lim_{t\to0}g_{p,q}(t)=0$;
    \item[(ii)] $\lim_{t\to\infty}g_{p,q}(t)=1$;
    \item[(iii)] $g_{p,q}$ is increasing.
\end{enumerate}
To deal with (i) we note that
\begin{equation*}
g_{p,q}(t)=\dfrac{\frac{t^{p-1}}{\Gamma(p)}+\frac{t^p}{\Gamma(p+1)}+\cdots}{\frac{t^{q-1}}{\Gamma(q)}+\frac{t^q}{\Gamma(q+1)}+\cdots}=\dfrac{\frac{t^{p-q}}{\Gamma(p)}+\frac{t^{p-q+1}}{\Gamma(p+1)}+\cdots}{\frac{1}{\Gamma(q)}+\frac{t}{\Gamma(q+1)}+\cdots}\overset{t\to0}\longrightarrow 0.
\end{equation*}
On the other hand, (ii) follows by
\begin{equation*}
\lim_{t\to\infty}g_{p,q}(t)=\lim_{t\to\infty}\dfrac{1-\dfrac{p-1}{\Gamma(p)}\int_t^\infty s^{p-2}\mathrm{e}^{-s}\mathrm ds}{1-\dfrac{q-1}{\Gamma(q)}\int_t^\infty s^{q-2}\mathrm{e}^{-s}\mathrm ds}=1,
\end{equation*}
which is clear from \eqref{stack}. Now we are left with the task to check $g_{p,M}'(t)>0$. Using $\Gamma(p+j)=\Gamma(p)\prod_{i=0}^{j-1}(p+i)$ we have
\begin{align*}
g'_{N,q}(t)\exp_q(t)^2&=\dfrac{p-1}{\Gamma(p)}t^{p-2}\exp_q(t)-\dfrac{q-1}{\Gamma(q)}t^{q-2}\exp_p(t)\\
&=\sum_{j=0}^\infty\left(\dfrac{p-1}{\Gamma(p)\Gamma(q+j)}-\dfrac{q-1}{\Gamma(q)\Gamma(p+j)}\right)t^{p+q-3}\\
&=\sum_{j=0}^\infty\dfrac{(p-1)\prod_{i=0}^{j-1}(p+i)-(q-1)\prod_{i=0}^{j-1}(q+j)}{\Gamma(q+j)\Gamma(p+j)}t^{p+q-3}\\
&>0,
\end{align*}
which completes the proof of the lemma.
\end{proof}

We introduce the notation used by M. Ishiwata \cite{MR2854113} that allows us to analyze the lack of compactness at infinite. (See also \cite{CLZ-CVPDE} for nonradial case.) Fixed $u_n\rightharpoonup u$ in $X^{1,p}_{\infty}(\alpha_0,p-1)$ with $\|u_n\|_{X^{1,p}_{\infty}}=1$ and $\mu>0$, define
\begin{equation*}
\rho_0=\lim_{R\to\infty}\lim_{n\to\infty}\int_0^R|u_n'|^pr^{p-1}+|u_n|^pr^{\alpha_0}\mathrm dr,
\end{equation*}
\begin{equation*}
\rho_\infty=\lim_{R\to\infty}\lim_{n\to\infty}\int_R^\infty|u_n'|^pr^{p-1}+|u_n|^pr^{\alpha_0}\mathrm dr,
\end{equation*}
\begin{equation*}
\nu_0=\lim_{R\to\infty}\lim_{n\to\infty}\int_0^R\exp_p(\mu |u_n|^{p'})r^{\alpha_0}\mathrm dr,
\end{equation*}
\begin{equation*}
\nu_\infty=\lim_{R\to\infty}\lim_{n\to\infty}\int_R^\infty\exp_p(\mu|u_n|^{p'})r^{\alpha_0}\mathrm dr,
\end{equation*}
\begin{equation*}
\eta_0=\lim_{R\to\infty}\lim_{n\to\infty}\int_0^R|u_n|^pr^{\alpha_0}\mathrm dr,
\end{equation*}
\begin{equation*}
\eta_\infty=\lim_{R\to\infty}\lim_{n\to\infty}\int_R^\infty|u_n|^pr^{\alpha_0}\mathrm dr,
\end{equation*}
up to subsequence if necessary.

For a sufficiently large number $R$, take a cut-off function $\varphi_R^0\in C^\infty([0,\infty))$ such that
\begin{equation*}
\left\{\begin{array}{llll}
\varphi_R^0(r)=1&\mbox{for }0\leq r<R,\\
0\leq \varphi_R^0(r)\leq1&\mbox{for }R\leq r<R+1,\\
\varphi_R^0(r)=0&\mbox{for }R+1\leq r,\\
|(\varphi_R^0)'(r)|\leq2&\mbox{for all }r
\end{array}\right.
\end{equation*}
and introduce another cut-off function $\varphi_R^\infty$ by
\begin{equation*}
\varphi_R^\infty(r):=1-\varphi_R^0(r).
\end{equation*}

Throughout the rest of this section, all the results are based in \cite{MR4097244} and \cite{MR2854113}.

\begin{lemma}\label{il21}
Let $u_{n,R}^*:=u_n\varphi_R^*$ ($*=0,\infty$). We have
\begin{equation*}
\rho_*=\lim_{R\to\infty}\lim_{n\to\infty}\int_0^\infty|(u_{n,R}^*)'|^pr^{p-1}+|u_{n,R}^*|^pr^{\alpha_0}\mathrm dr,
\end{equation*}
\begin{equation*}
\nu_*=\lim_{R\to\infty}\lim_{n\to\infty}\int_0^\infty\exp_p(\mu|u_{n,R}^*|^{p'})r^{\alpha_0}\mathrm dr,
\end{equation*}
\begin{equation*}
\eta_*=\lim_{R\to\infty}\lim_{n\to\infty}\int_0^\infty|u^*_{n,R}|^pr^{\alpha_0}\mathrm dr.
\end{equation*}
\end{lemma}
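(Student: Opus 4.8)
The plan is to establish all six identities (for $\rho_*$, $\nu_*$, $\eta_*$ with $*=0,\infty$) by a single splitting scheme, exploiting the structure of the cut-offs: $\varphi_R^0\equiv1$ on $[0,R]$ and $\varphi_R^0\equiv0$ on $[R+1,\infty)$, while $\varphi_R^\infty=1-\varphi_R^0$ vanishes on $[0,R]$ and equals $1$ on $[R+1,\infty)$. Hence $u_{n,R}^0$ coincides with $u_n$ on $[0,R]$ and vanishes on $[R+1,\infty)$ (note $\exp_p(0)=0$ since $p>1$), so each integral splits into a core piece over $[0,R]$, in which $u_{n,R}^0$ may be replaced by $u_n$, plus a transition piece over $[R,R+1]$. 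For the energy this reads
\begin{equation*}
\int_0^\infty\!\big(|(u_{n,R}^0)'|^pr^{p-1}+|u_{n,R}^0|^pr^{\alpha_0}\big)\mathrm dr=\int_0^R\!\big(|u_n'|^pr^{p-1}+|u_n|^pr^{\alpha_0}\big)\mathrm dr+\int_R^{R+1}\!\big(|(u_{n,R}^0)'|^pr^{p-1}+|u_{n,R}^0|^pr^{\alpha_0}\big)\mathrm dr ,
\end{equation*}
and analogously for the $\exp_p$- and the $L^p_{\alpha_0}$-integrands. The core piece reproduces $\rho_0$ (resp. $\nu_0$, $\eta_0$) in the double limit by definition, so everything reduces to showing that the transition piece over $[R,R+1]$ vanishes under $\lim_{R\to\infty}\lim_{n\to\infty}$. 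The case $*=\infty$ is symmetric, with the core piece over $[R+1,\infty)$ reproducing $\rho_\infty$ (resp. $\nu_\infty$, $\eta_\infty$) and again only a transition piece remaining.

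For the transition pieces I would use a monotone tail-difference argument. Setting $g(R):=\lim_{n\to\infty}\int_R^\infty(|u_n'|^pr^{p-1}+|u_n|^pr^{\alpha_0})\,\mathrm dr$ along the subsequence where all quantities converge, the function $g$ is nonincreasing and bounded below, hence convergent with $\lim_{R\to\infty}g(R)=\rho_\infty$. Therefore
\begin{equation*}
\lim_{n\to\infty}\int_R^{R+1}(|u_n'|^pr^{p-1}+|u_n|^pr^{\alpha_0})\,\mathrm dr=g(R)-g(R+1)\longrightarrow0\qquad(R\to\infty).
\end{equation*}
The same device applied to the monotone tails of $\int_R^\infty\exp_p(\mu|u_n|^{p'})r^{\alpha_0}$ and of $\int_R^\infty|u_n|^pr^{\alpha_0}$ (finite by Lemma \ref{lemma75} and Proposition \ref{propimersaoinfinito}) disposes of the transition pieces for $\nu_*$ and $\eta_*$, since $0\le\varphi_R^*\le1$ gives $|u_{n,R}^*|\le|u_n|$ and $\exp_p$ is increasing, so the corresponding integrands of $u_{n,R}^*$ are dominated pointwise by those of $u_n$.

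The one contribution not directly dominated by these tails is the cross term produced by differentiating the cut-off. Since $(u_{n,R}^*)'=u_n'\varphi_R^*+u_n(\varphi_R^*)'$ with $|(\varphi_R^*)'|\le2$, expanding $|(u_{n,R}^*)'|^pr^{p-1}$ leaves a piece bounded by $C\int_R^{R+1}|u_n|^pr^{p-1}\,\mathrm dr$, which carries the derivative weight $r^{p-1}$ rather than the function weight $r^{\alpha_0}$. I expect this to be the main obstacle. When $\alpha_0\ge p-1$ it is harmless, because $r^{p-1}\le r^{\alpha_0}$ on $[R,R+1]\subset[1,\infty)$ reduces it to a tail already controlled above. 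When $-1<\alpha_0<p-1$ the weight $r^{p-1}$ is genuinely larger; here I would localise, using that on the fixed bounded annulus $[R,R+1]$ the weights are comparable to constants, so the compactness in Proposition \ref{propimersaoinfinito} gives $\int_R^{R+1}|u_n|^pr^{p-1}\to\int_R^{R+1}|u|^pr^{p-1}$ as $n\to\infty$, reducing matters to showing $\int_R^{R+1}|u|^pr^{p-1}\to0$ as $R\to\infty$. This last step is where the pointwise decay $|u(r)|\le Cr^{-(p-1)(\alpha_0+1)/p^2}$ from the Radial Lemma \ref{lemma53} and the finiteness of $\|u\|_{X^{1,p}_\infty}$ must be combined carefully, and it is the delicate point of the whole argument. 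Once this cross term is controlled, assembling the three pieces yields $\rho_*=\lim_{R\to\infty}\lim_{n\to\infty}\int_0^\infty(|(u_{n,R}^*)'|^pr^{p-1}+|u_{n,R}^*|^pr^{\alpha_0})\,\mathrm dr$, and the identical bookkeeping gives the corresponding identities for $\nu_*$ and $\eta_*$.
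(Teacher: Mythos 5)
Your proposal follows essentially the same route as the paper. For the zeroth-order quantities the paper simply sandwiches $\int_0^\infty F(u_{n,R}^0)r^{\alpha_0}\mathrm dr$ between $\int_0^R F(u_n)r^{\alpha_0}\mathrm dr$ and $\int_0^{R+1}F(u_n)r^{\alpha_0}\mathrm dr$ for $F(t)=|t|^p$ and $F(t)=\exp_p(\mu|t|^{p'})$ (and symmetrically for $*=\infty$), the two bounds having the same double limit by definition of $\eta_*,\nu_*$; your monotone tail-difference device $g(R)-g(R+1)\to0$ accomplishes the same thing and is a perfectly clean substitute. For the derivative term the paper, like you, expands $(u_{n,R}^0)'=\varphi_R^0u_n'+(\varphi_R^0)'u_n$ (via the mean value theorem), isolates a remainder $\mathcal R_{n,R}$ supported on $[R,R+1]$ controlled by $\int_R^{R+1}|u_n|^pr^{p-1}\mathrm dr$, uses the compact embedding of Theorem \ref{theo32} to replace $u_n$ by the weak limit $u$ on $(0,R+1)$, and then asserts \eqref{dente4} without further argument. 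So you have correctly identified both the structure of the proof and its single delicate point.

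That point, however, is left open in your write-up for $-1<\alpha_0<p-1$, and it is a genuine gap: the Radial Lemma \ref{lemma53} gives $|u(r)|\le Cr^{-(\alpha_0+1)(p-1)/p^2}$, which only yields $\int_R^{R+1}|u|^pr^{p-1}\mathrm dr\lesssim R^{(p-1)(p-\alpha_0-1)/p}$, an exponent that is positive precisely when $\alpha_0<p-1$; and the hypotheses $u\in L^p_{\alpha_0}$, $u'\in L^p_{p-1}$ do not by themselves force $\int_R^{R+1}|u|^pr^{p-1}\mathrm dr\to0$ (for $p=2$, $\alpha_0=-1/2$, a sum of disjoint tents of height $R_k^{-1/2}$ and base width comparable to $R_k$, with plateaus on $[R_k,R_k+1]$ and $R_k=4^k$, lies in $X^{1,2}_\infty(-1/2,1)$ yet has $\int_{R_k}^{R_k+1}|u|^2r\,\mathrm dr$ bounded away from zero). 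So the combination of pointwise decay and finiteness of the norm that you defer to ``must be combined carefully'' cannot, on its own, close the argument; some additional input (e.g.\ restricting to $\alpha_0\ge p-1$, exploiting extra structure of the particular maximizing sequences to which the lemma is applied, or a different choice of cut-off whose derivative compensates the weight discrepancy, as in Lemma \ref{lemmacompsuppdense}) is needed. To be fair, this is exactly the step at which the paper's own proof offers only the assertion \eqref{dente4}, so you have located the real difficulty; your treatment of the case $\alpha_0\ge p-1$ via $r^{p-1}\le r^{\alpha_0}$ on $[R,R+1]$ is correct and complete.
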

\begin{proof}
Since, for $F(t)=|t|^p$ or $F(t)=\exp_p(\mu|t|^{p'})$, we have
\begin{equation*}
\int_0^RF(u_n)r^{\alpha_0}\mathrm dr\leq\int_0^\infty F(u_{n,R}^0)r^{\alpha_0}\mathrm dr\leq\int_0^{R+1}F(u_n)r^{\alpha_0}\mathrm dr
\end{equation*}
and
\begin{equation*}
\int_{R+1}^\infty F(u_n)r^{\alpha_0}\mathrm dr\leq\int_0^\infty F(u_{n,R}^\infty)r^{\alpha_0}\mathrm dr\leq\int_R^\infty F(u_n)r^{\alpha_0}\mathrm dr,
\end{equation*}
it is enough to check that
\begin{equation}\label{dente1}
\lim_{R\to\infty}\lim_{n\to\infty}\int_0^\infty|(u_{n,R}^0)'|^pr^{p-1}\mathrm dr=\lim_{R\to\infty}\lim_{n\to\infty}\int_0^R|u_n'|^pr^{p-1}\mathrm dr
\end{equation}
and
\begin{equation}\label{dente2}
\lim_{R\to\infty}\lim_{n\to\infty}\int_0^\infty|(u_{n,R}^\infty)'|^pr^{p-1}\mathrm dr=\lim_{R\to\infty}\lim_{n\to\infty}\int_R^\infty|u_n'|^pr^{p-1}\mathrm dr.
\end{equation}
We will just prove \eqref{dente1} since the proof of \eqref{dente2} is analogous. Note that, by Mean Value Theorem,
\begin{align}
\int_0^\infty\left|(u_{n,R}^0)'\right|^pr^{p-1}\mathrm dr&=\int_0^\infty\left|\varphi^0_Ru_n'+(\varphi_R^0)'u_n\right|^pr^{p-1}\mathrm dr\nonumber\\
&=\int_0^\infty|u_n'|^p|\varphi_R^0|^pr^{p-1}\mathrm dr+\mathcal R_{n,R},\label{dente3}
\end{align}
where $0<\theta<1$ and
\begin{equation*}
\mathcal R_{n,R}=p\int_0^\infty\left|\varphi_R^0u_n'+\theta(\varphi_R^0)'u_n\right|^{p-2}\left(\varphi_R^0u_n'+\theta(\varphi_R^0)'u_n\right)(\varphi_R^0)'u_nr^{p-1}\mathrm dr.
\end{equation*}
By Theorem \ref{theo32} we have
\begin{equation*}
\lim_{n\to\infty}\int_R^{R+1}|u_n|^pr^{p-1}\mathrm dr=\int_R^{R+1}|u|^pr^{p-1}\mathrm dr.
\end{equation*}
Therefore,
\begin{equation}\label{dente4}
\lim_{R\to\infty}\lim_{n\to\infty}\mathcal R_{n,R}=0.
\end{equation}
Using \eqref{dente3}, \eqref{dente4} and
\begin{equation*}
\int_0^R|u_n'|^pr^{p-1}\mathrm dr\leq\int_0^\infty|u_n'|^p|\varphi^0_R|^pr^{p-1}\mathrm dr\leq\int_0^{R+1}|u_n'|^pr^{p-1}\mathrm dr
\end{equation*}
we conclude \eqref{dente1}.
\end{proof}

For the proof of Theorem \ref{theo12}, we need to study $d_{p,\mu}$ for a vanishing sequence $(u_n)$. Here is the precise definition of vanishing sequence.


\begin{definition}
A sequence $(u_n)\subset X^{1,p}_{\infty}(\alpha_0,p-1)$ is a \textit{normalized vanishing sequence} ((NVS) in short) if
\begin{enumerate}
    \item $\|u_n\|_{X^{1,p}_{\infty}}=1$,
    \item $u_n\rightharpoonup 0$ weakly in $X^{1,p}_{\infty}$,
    \item $\nu_0=\lim_{R\to\infty}\lim_{n\to\infty}\int_R^\infty\exp_p(\mu|u_n|^{p'})r^{\alpha_0}\mathrm dr=0$.
\end{enumerate}
\end{definition}

\begin{definition}\begin{flushleft}
   The number
    \begin{equation*}
    d_{NVL}(p,\mu)=\sup_{(u_n)\colon(NVS)}\int_0^\infty\exp_p\left(\mu|u_n|^{p'}\right)r^{\alpha_0}\mathrm dr
    \end{equation*}
    is called a \textit{normalized vanishing limit}.
\end{flushleft}
\end{definition}

\begin{prop}\label{ip21}
There holds $d_{NVL}(p,\mu)=\mu^{p-1}/\Gamma(p)$ if $\alpha_0>-1$.
\end{prop}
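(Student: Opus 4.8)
The plan is to establish the two inequalities $d_{NVL}(p,\mu)\le \mu^{p-1}/\Gamma(p)$ and $d_{NVL}(p,\mu)\ge \mu^{p-1}/\Gamma(p)$ separately, working in the subcritical range $0<\mu<\alpha_0+1$ in which all the integrals below are finite by Theorem \ref{theo0}. The guiding observation is that, because $(p-1)p'=p$, the first term ($j=0$) of the series defining $\exp_p$ contributes precisely $\mu^{p-1}|u_n|^p/\Gamma(p)$, so by monotone convergence
\[
\int_0^\infty\exp_p(\mu|u_n|^{p'})r^{\alpha_0}\mathrm dr=\frac{\mu^{p-1}}{\Gamma(p)}\|u_n\|_{L^p_{\alpha_0}}^p+\mathcal R_n,\qquad \mathcal R_n:=\sum_{j\ge1}\frac{\mu^{p-1+j}}{\Gamma(p+j)}\|u_n\|_{L^{(p-1+j)p'}_{\alpha_0}}^{(p-1+j)p'}.
\]
Everything reduces to showing that $\mathcal R_n\to0$ for a NVS while the leading term is bounded by the normalization.

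For the upper bound, note first that $\frac{\mu^{p-1}}{\Gamma(p)}\|u_n\|_{L^p_{\alpha_0}}^p\le \frac{\mu^{p-1}}{\Gamma(p)}\|u_n\|_{X^{1,p}_\infty}^p=\frac{\mu^{p-1}}{\Gamma(p)}$. To handle $\mathcal R_n$, observe that every exponent satisfies $(p-1+j)p'>p$ for $j\ge1$, so Theorem \ref{theoimersaoinfinito}(b)(i) (with $\theta=\alpha_0>-1$, $p>1$) gives the \emph{compact} embedding $X^{1,p}_\infty\hookrightarrow L^{(p-1+j)p'}_{\alpha_0}$; since $u_n\rightharpoonup0$, each individual summand tends to $0$. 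The delicate point is to pass the limit through the infinite sum, and the plan is to fix $\mu'\in(\mu,\alpha_0+1)$ and factor $\mu^{p-1+j}=(\mu/\mu')^{p-1+j}(\mu')^{p-1+j}$, whence for any $K$ the tail obeys
\[
\sum_{j>K}\frac{\mu^{p-1+j}}{\Gamma(p+j)}\|u_n\|_{L^{(p-1+j)p'}_{\alpha_0}}^{(p-1+j)p'}\le\Big(\tfrac{\mu}{\mu'}\Big)^{K}\int_0^\infty\exp_p(\mu'|u_n|^{p'})r^{\alpha_0}\mathrm dr\le \Big(\tfrac{\mu}{\mu'}\Big)^{K}C,
\]
uniformly in $n$ by Theorem \ref{theo0}(b) applied with $\mu'$. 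Choosing $K$ large makes the tail uniformly small, while the finitely many remaining summands vanish as $n\to\infty$; hence $\mathcal R_n\to0$ and $\limsup_n\int_0^\infty\exp_p(\mu|u_n|^{p'})r^{\alpha_0}\mathrm dr\le \mu^{p-1}/\Gamma(p)$. Taking the supremum over all NVS yields $d_{NVL}(p,\mu)\le \mu^{p-1}/\Gamma(p)$.

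For the lower bound, I would exhibit an explicit NVS realizing the value. Fix $w\in\Upsilon\setminus\{0\}$ and set $u_n(r)=c_n\,w(r/n)$ with $c_n>0$ chosen so that $\|u_n\|_{X^{1,p}_\infty}=1$. The weight $r^{p-1}$ on the derivative is exactly scale invariant, giving $\|u_n'\|_{L^p_{p-1}}^p=c_n^p\|w'\|_{L^p_{p-1}}^p$, whereas $\|u_n\|_{L^p_{\alpha_0}}^p=c_n^p\,n^{\alpha_0+1}\|w\|_{L^p_{\alpha_0}}^p$; since $\alpha_0>-1$, the normalization forces $c_n\to0$, hence $\|u_n'\|_{L^p_{p-1}}\to0$ and $\|u_n\|_{L^p_{\alpha_0}}^p\to1$. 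As $\|u_n\|_\infty=c_n\|w\|_\infty\to0$ and $u_n\to0$ pointwise, we have $u_n\rightharpoonup0$, and the localized nonlinear mass near the origin vanishes because $\int_0^R\exp_p(\mu|u_n|^{p'})r^{\alpha_0}\mathrm dr\le \frac{C\mu^{p-1}}{\Gamma(p)}c_n^p\to0$ for each fixed $R$, so $(u_n)$ is a NVS. Finally, using $\exp_p(t)/t^{p-1}\to 1/\Gamma(p)$ as $t\to0^+$ together with the \emph{uniform} smallness $\mu|u_n|^{p'}\le \mu c_n^{p'}\|w\|_\infty^{p'}\to0$, one gets $\exp_p(\mu|u_n|^{p'})=\frac{\mu^{p-1}}{\Gamma(p)}|u_n|^p(1+o(1))$ uniformly, and therefore
\[
\int_0^\infty\exp_p(\mu|u_n|^{p'})r^{\alpha_0}\mathrm dr=\frac{\mu^{p-1}}{\Gamma(p)}\,\|u_n\|_{L^p_{\alpha_0}}^p\,(1+o(1))\longrightarrow\frac{\mu^{p-1}}{\Gamma(p)},
\]
which gives $d_{NVL}(p,\mu)\ge \mu^{p-1}/\Gamma(p)$ and closes the argument.

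The main obstacle is the termwise passage to the limit in $\mathcal R_n$: the compact embeddings of Theorem \ref{theoimersaoinfinito} only deliver convergence of each separate higher-order term, and controlling the entire tail of the series uniformly in $n$ is what forces the comparison with $\exp_p(\mu'|u_n|^{p'})$ at a slightly larger subcritical parameter $\mu'<\alpha_0+1$ supplied by Theorem \ref{theo0}(b); this is precisely where the restriction $\mu<\alpha_0+1$ enters. A secondary care, less serious but necessary, is verifying that the dilated sequence $u_n=c_n w(r/n)$ genuinely satisfies all three defining properties of a NVS rather than merely being weakly null.
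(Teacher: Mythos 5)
Your argument is sound and reaches the right value, and the overall architecture (peel off the $j=0$ term, kill the remainder $\mathcal R_n$, exhibit an explicit spreading sequence for the lower bound) matches the paper's; the lower-bound construction $u_n=c_nw(r/n)$ is, after reparametrization, exactly the paper's $\psi_n(r)=\gamma_n\psi_0(\gamma_n^{p/(\alpha_0+1)}r)$. Where you genuinely diverge is in the control of $\mathcal R_n$: you send each summand to zero via the compact embeddings $X^{1,p}_\infty\hookrightarrow L^{(p-1+j)p'}_{\alpha_0}$ and dominate the tail of the series by $(\mu/\mu')^K\int_0^\infty\exp_p(\mu'|u_n|^{p'})r^{\alpha_0}\mathrm dr$ with $\mu<\mu'<\alpha_0+1$, invoking Theorem \ref{theo0}(b). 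This is clean and in fact proves something stronger than needed (you never use the NVS condition $\nu_0=0$ in the upper bound: any normalized weakly null sequence works), and it is essentially the $u=0$ case of the paper's later Lemma \ref{lemmacompexp}. The paper instead works only on $(R,\infty)$, where the pointwise decay $|u_n(r)|\leq Cr^{-(\alpha_0+1)(p-1)/p^2}$ from Lemma \ref{lemma53} bounds the $j\geq2$ terms by an explicit convergent series in $R$ and a H\"older interpolation handles $j=1$; the region $(0,R)$ is then disposed of by the defining property $\nu_0=0$ of an NVS. The trade-off is scope: your tail estimate needs a finite subcritical Trudinger--Moser supremum at some $\mu'>\mu$, hence the restriction $\mu<\alpha_0+1$ that you candidly announce, whereas the paper's radial-decay route makes no restriction on $\mu$ and so proves the proposition exactly as stated. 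This matters beyond cosmetics, since Proposition \ref{ip23} invokes the present proposition under the hypotheses of Proposition \ref{ip22}, which for $p>2$ allow arbitrary $\mu>0$; to cover that range you would need to replace your tail bound by the decay-based estimate (your finitely many leading terms already go to zero by compactness for every $\mu$, so only the uniform tail control needs repairing). Within $0<\mu<\alpha_0+1$ — the range relevant to Theorem \ref{theo12} — your proof is complete.
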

\begin{proof}
Let $(u_n)$ be an (NVS). Up to subsequence,
\begin{equation}
\nu_\infty=\lim_{R\to\infty}\lim_{n\to\infty}\int_R^\infty\left(\exp_p(\mu|u_n|^{p'})-\dfrac{\mu^{p-1}}{\Gamma(p)}|u_n|^{p}\right)r^{\alpha_0}\mathrm dr+\lim_{R\to\infty}\lim_{n\to\infty}\dfrac{\mu^{p-1}}{\Gamma(p)}\|u_n\|^{p}_{L^{p}_{\alpha_0}(R,\infty)}.\label{ish13}
\end{equation}
We first prove that
\begin{equation}\label{dente5}
\lim_{R\to\infty}\lim_{n\to\infty}\int_R^\infty\left(\exp_p(\mu|u_n|^{p'})-\dfrac{\mu^{p-1}}{\Gamma(p)}|u_n|^{p}\right)r^{\alpha_0}\mathrm dr=0.
\end{equation}
By Lemma \ref{lemma53},
\begin{equation}\label{dente7}
|u_n(r)|\leq C\|u_n\|_{X^{1,p}_\infty}\dfrac1{r^{\frac{(\alpha_0+1)(p-1)}{p^2}}}\quad\forall r\in(0,\infty),
\end{equation}
where $C>0$ depends only on $p$. This gives, for $j\geq2$,
\begin{equation*}
\|u_n\|_{L^{p'(p-1+j)}_{\alpha_0}(R,\infty)}^{p'(p-1+j)}\leq C^{p-1+j}\int_R^\infty r^{\alpha_0-\frac{(\alpha_0+1)(p-1+j)}{p}}\mathrm dr\leq\dfrac{pC^{p-1+j}}{\alpha_0+1}R^{\frac{(\alpha_0+1)(1-j)}{p}}.
\end{equation*}
Then, for $R\geq1$,
\begin{align}
\sum_{j=2}^\infty\dfrac{\mu^{p-1+j}}{\Gamma(p+j)}\|u_n\|^{p'(p-1+j)}_{L^{p'(p-1+j)}_{\alpha_0}(R,\infty)}&\leq\dfrac{p}{\alpha_0+1}\sum_{j=2}^\infty\dfrac{(\mu C)^{p-1+j}}{\Gamma(p+j)}R^{\frac{(\alpha_0+1)(1-j)}{p}}\nonumber\\
&=\dfrac{p}{\alpha_0+1}(\mu C)^{p}\sum_{j=2}^\infty\dfrac{(\mu CR^{-\frac{\alpha_0+1}{p}})^{j-1}}{\Gamma(p+j)}\nonumber\\
&\leq\dfrac{p}{\alpha_0+1}(\mu C)^{p}\sum_{j=2}^\infty\dfrac{(\mu CR^{-\frac{\alpha_0+1}p})^{j-1}}{(j-1)!}\nonumber\\
&=\dfrac{p}{\alpha_0+1}(\mu C)^p\left(\mathrm{e}^{\mu CR^{-\frac{\alpha_0+1}{p}}}-1\right).\label{dente6}
\end{align}

We claim that for each $\gamma>p$ and $\eta>1$ with $p-\eta(p^2-\gamma p+\gamma)>0$ (for $\gamma\geq p^2/(p-1)$ take any $\eta>1$ and for $\gamma<p^2/(p-1)$ take any $1<\eta<p/(p^2-\gamma p+\gamma)$) there exists $\widetilde C>0$ depending only on $p$, $\gamma$ and $\eta$ such that
\begin{equation}\label{dente8}
\|u_n\|_{L^\gamma_{\alpha_0}(R,\infty)}^\gamma\leq \widetilde CR^{-\frac{(\alpha_0+1)[p-\eta(p^2-\gamma p+\gamma)]}{p^2\eta}}.
\end{equation}
Indeed, note that $\gamma=\frac{p}\eta+\frac{\eta\gamma-p}{\eta}$. By H\"older inequality and \eqref{dente7} we obtain
\begin{align*}
\|u_n\|^{\gamma}_{L^{\gamma}_{\alpha_0}(R,\infty)}&\leq\left(\int_R^\infty|u_n|^pr^{\alpha_0}\mathrm dr\right)^{\frac1\eta}\left(\int_R^\infty|u_n|^{\frac{\eta\gamma-p}{\eta-1}}r^{\alpha_0}\mathrm dr\right)^{\frac{\eta-1}\eta}\\
&\leq\sup_{n\in\mathbb N}\|u_n\|_{L^p_{\alpha_0}(0,\infty)}^{\frac p\eta}C^{\frac{\eta-1}\eta}\left(\int_R^\infty r^{\alpha_0-\frac{(\alpha_0+1)(p-1)(\eta\gamma-p)}{p^2(\eta-1)}}\mathrm dr\right)^{\frac{\eta-1}\eta}\\
&\leq C^{\frac{\eta-1}\eta}\left(\dfrac{p^2(\eta-1)}{(\alpha_0+1)[p-\eta(p^2-\gamma p+\gamma)]}\right)^{\frac{\eta-1}\eta}R^{-\frac{(\alpha_0+1)[p-\eta(p^2-\gamma p+\gamma)]}{p^2\eta}}.
\end{align*}
This concludes \eqref{dente8}. Since the right term of \eqref{dente6} and \eqref{dente8} (with $\gamma=p'p>p$) tend to 0 as $R\to\infty$ uniformly in $n$, we have \eqref{dente5}.

Using $\nu_0=0$ (because $(u_n)$ is an (NVS)) together with $\|u_n\|_{L^p_{\alpha_0}(0,\infty)}^p\leq \|u_n\|^p_{X^{1,p}_{\infty}}\leq 1$, we get
\begin{align*}
\lim_{n\to\infty}\int_0^\infty\exp_p(\mu|u_n|^{p'})r^{\alpha_0}\mathrm dr&=\nu_0+\nu_\infty=\nu_\infty\\
&=\lim_{R\to\infty}\lim_{n\to\infty}\dfrac{\mu^{p-1}}{\Gamma(p)}\|u_n\|^p_{L^p_{\alpha_0}(R,\infty)}\leq\dfrac{\mu^{p-1}}{\Gamma(p)}.
\end{align*}

It remains to prove the converse inequality: $d_{NVL}(p,\mu)\geq \mu^{p-1}/\Gamma(p)$. Let $\psi_0\colon[0,\infty)\to\mathbb R$ be a smooth function satisfying $\|\psi'\|_{L^p_{p-1}}=\|\psi\|_{L^p_{\alpha_0}}=1$ with $\psi(r)=0$ for $r$ sufficiently large. Fixed $(\gamma_n)$ a sequence in $(0,\infty)$ with $\gamma_n\to0$, we define $\psi_n(r)=\gamma_n\psi_0(\gamma_n^{\frac{p}{\alpha_0+1}}r)$. It follows easily that
\begin{equation}\label{dente11}
\|\psi'\|_{L^p_{p-1}}=\gamma_n,\ \|\psi\|_{L^p_{\alpha_0}}=1,\ \|\psi_n\|_{X^{1,p}_{\infty}}\to1,\ \lim_{R\to\infty}\lim_{n\to\infty}\int_R^\infty|\psi_n|^pr^{\alpha_0}\mathrm dr=1.
\end{equation}
We claim that $(\psi_n/\|\psi_n\|_{X^{1,p}_\infty})$ is a (NVS). Indeed, fixed $R>0$ note that
\begin{equation*}
\int_0^R\exp_p\left(\mu\left|\dfrac{\psi_n(r)}{\|\psi_n\|_{X^{1,p}_{\infty}}}\right|^{p'}\right)r^{\alpha_0}\mathrm dr\leq\exp_p\left(\dfrac{\mu\gamma_n^{p}\|\psi_0\|_{L^\infty}^{p'}}{\|\psi_n\|^{p'}_{X^{1,p}_\infty}}\right)\int_0^Rr^{\alpha_0}\mathrm dr\overset{n\to\infty}\longrightarrow0.
\end{equation*}
Then $\nu_0=0$ for $(\psi_n/\|\psi_n\|_{X^{1,p}_\infty})$. Since $\psi_n(r)\to0$ for each $r\in[0,\infty)$ we conclude that $(\psi_n/\|\psi_n\|_{X^{1,p}_\infty})$ is a (NVS). By $\nu_0=0$, \eqref{dente11} and \eqref{dente5}, we have
\begin{align*}
d_{NVL}(p,\mu)&\geq\lim_{n\to\infty}\int_0^\infty\exp_p\left(\mu\left|\dfrac{\psi_n}{\|\psi_n\|_{X^{1,p}_\infty}}\right|^{p'}\right)r^{\alpha_0}\mathrm dr\\
&=\nu_0+\nu_\infty=\nu_\infty\\
&=\lim_{R\to\infty}\lim_{n\to\infty}\int_R^\infty\exp_p\left(\dfrac{\mu|\psi_n|^{p'}}{\|\psi_n\|_{X^{1,p}_\infty}^{p'}}\right)r^{\alpha_0}\mathrm dr\\
&=\lim_{R\to\infty}\lim_{n\to\infty}\dfrac{\mu^{p-1}}{\Gamma(p)}\dfrac{\int_R^\infty|\psi_n|^pr^{\alpha_0}\mathrm dr}{\|\psi_n\|^p_{X^{1,p}_\infty}}\\
&\quad+\lim_{R\to\infty}\lim_{n\to\infty}\int_R^\infty\left(\exp_p(\mu|\psi_n|^{p'})-\dfrac{\mu^{p-1}}{\Gamma(p)}|\psi_n|^{p}\right)r^{\alpha_0}\mathrm dr\\
&=\dfrac{\mu^{p-1}}{\Gamma(p)}.
\end{align*}
\end{proof}

\begin{prop}\label{ip22}
Assume $\alpha_0>-1$. Then
\begin{equation*}
    d_{p,\mu}>\frac{\mu^{p-1}}{\Gamma(p)}
\end{equation*}
provided that one of the following conditions holds:
\begin{flushleft}
    \noindent$\mathrm{(i)}$ $p>2$ and $\mu\in(0,\infty)$;\\
    \noindent$\mathrm{(ii)}$ $p=2$ and $\mu\in(2/B_{2,\alpha_0},\infty)$,
    where $B_{2,\alpha_0}=\sup_{u\in X^{1,2}_\infty\backslash\{0\}}\frac{\|u\|^4_{L^4_{\alpha_0}}}{\|u'\|_{L^2_1}^2\|u\|_{L^2_{\alpha_0}}^2}$.
\end{flushleft}
\end{prop}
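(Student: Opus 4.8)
The plan is to produce, for each admissible $\mu$, a single unit-norm test function whose exponential integral strictly exceeds the vanishing level $\mu^{p-1}/\Gamma(p)$ identified in Proposition \ref{ip21}. I would reuse the concentrating family from the proof of that proposition: fix a smooth $\psi_0$ of compact support normalized so that $\|\psi_0'\|_{L^p_{p-1}}=\|\psi_0\|_{L^p_{\alpha_0}}=1$, set $\psi_\gamma(r)=\gamma\psi_0(\gamma^{p/(\alpha_0+1)}r)$ for a small parameter $\gamma>0$, and let $v_\gamma=\psi_\gamma/\|\psi_\gamma\|_{X^{1,p}_\infty}$. The same change of variables that produced \eqref{dente11} gives $\|\psi_\gamma'\|_{L^p_{p-1}}^p=\gamma^p$ and $\|\psi_\gamma\|_{L^q_{\alpha_0}}^q=\gamma^{q-p}\|\psi_0\|_{L^q_{\alpha_0}}^q$ for every $q\ge p$, so that $\|\psi_\gamma\|_{X^{1,p}_\infty}^p=1+\gamma^p$.

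The key step is a termwise expansion of $\exp_p$. Writing the exponents as $p'(p-1+j)=p+p'j$ and inserting the scaling above, one finds
\begin{equation*}
\|v_\gamma\|_{L^{p+p'j}_{\alpha_0}}^{p+p'j}=\frac{\gamma^{p'j}\,\|\psi_0\|_{L^{p+p'j}_{\alpha_0}}^{p+p'j}}{(1+\gamma^p)^{1+p'j/p}}.
\end{equation*}
Since every coefficient of $\exp_p$ is positive, I would retain only the $j=0$ and $j=1$ terms (the rest being nonnegative) to obtain the clean lower bound
\begin{equation*}
d_{p,\mu}\ge\int_0^\infty\exp_p(\mu|v_\gamma|^{p'})r^{\alpha_0}\mathrm dr\ge\frac{\mu^{p-1}}{\Gamma(p)}\frac{1}{1+\gamma^p}+\frac{\mu^{p}}{\Gamma(p+1)}\frac{\gamma^{p'}\|\psi_0\|_{L^{pp'}_{\alpha_0}}^{pp'}}{(1+\gamma^p)^{1+p'/p}},
\end{equation*}
where I used $p+p'=pp'$.

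Subtracting $\mu^{p-1}/\Gamma(p)$ and factoring out $\gamma^{p'}$, the desired inequality $d_{p,\mu}>\mu^{p-1}/\Gamma(p)$ reduces to
\begin{equation*}
\frac{\mu^{p}\|\psi_0\|_{L^{pp'}_{\alpha_0}}^{pp'}}{\Gamma(p+1)(1+\gamma^p)^{1+p'/p}}>\frac{\mu^{p-1}}{\Gamma(p)}\frac{\gamma^{p-p'}}{1+\gamma^p}.
\end{equation*}
The whole case distinction is then governed by the sign of $p-p'$. For $p>2$ one has $p'<p$, so the right-hand side tends to $0$ as $\gamma\to0$ while the left-hand side tends to a positive constant; hence the inequality holds for all small $\gamma$ and every $\mu>0$, which is case (i). For $p=2$ one has $p'=p=2$, both sides are of order $\gamma^0$, and passing to the limit $\gamma\to0$ the inequality becomes $\tfrac12\mu^2\|\psi_0\|_{L^4_{\alpha_0}}^4>\mu$, that is $\mu>2/\|\psi_0\|_{L^4_{\alpha_0}}^4$. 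Invoking the definition of $B_{2,\alpha_0}$ together with the density of $\Upsilon$ (Lemma \ref{lemmacompsuppdense}) and the scaling invariance of the Weinstein quotient, I can choose $\psi_0$ with $\|\psi_0\|_{L^4_{\alpha_0}}^4$ arbitrarily close to $B_{2,\alpha_0}$, so the threshold $2/\|\psi_0\|_{L^4_{\alpha_0}}^4$ drops below any prescribed $\mu>2/B_{2,\alpha_0}$; this gives case (ii).

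The main obstacle is the exact bookkeeping of the scaling exponents, specifically verifying that the $j$-th term of the series scales precisely as $\gamma^{p'j}$, which is what makes the $j=1$ term the decisive correction and pushes all $j\ge2$ terms to strictly higher order; and, in the borderline regime $p=2$, the identification of the critical amplitude with the constant $B_{2,\alpha_0}$ via the normalization of $\psi_0$. Because I retain only two nonnegative terms of the series, no uniform control of the tail is required: the estimates of Lemma \ref{lemma75} are needed merely to guarantee that the full integral is finite, so that the displayed lower bound is meaningful.
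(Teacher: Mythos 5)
Your proposal is correct and follows essentially the same route as the paper: your family $\psi_\gamma$ is exactly the paper's $v_t$ with $t=\gamma^p$, and both arguments truncate $\exp_p$ to the $j=0$ and $j=1$ terms and decide the issue by comparing the orders $\gamma^{p'}$ and $\gamma^{p}$ as $\gamma\to0$ (which is what the paper's condition $g_{p,\mu,v}(0)>0$ encodes). The only minor difference is in case (ii), where you use a near-maximizer of the Weinstein quotient rather than the extremal function supplied by Proposition \ref{lemmab2}, which slightly lightens the dependence on that attainment result.
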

\begin{proof}
Fixed $v\in X^{1,p}_\infty\backslash\{0\}$, we define the family of functions $v_t$ given by
\begin{equation*}
v_t(r):=t^{\frac1p}v\left(t^{\frac1{\alpha_0+1}}r\right),
\end{equation*}
where $t>0$ is a parameter. For $q\geq1$, we can see that
\begin{equation*}
\|v'_t\|_{L^{p}_{p-1}}=t^{\frac1p}\|v'\|_{L^p_{p-1}}\mbox{ and }\|v_t\|_{L^q_{\alpha_0}}=t^{\frac{1}p-\frac{1}{q}}\|v\|_{L^q_{\alpha_0}}.
\end{equation*}
Then
\begin{align*}
d_{p,\mu}&\geq\int_0^\infty\exp_p\left[\mu\left(\dfrac{|v_t|}{\|v_t\|_{X^{1,p}_\infty}}\right)^{p'}\right]r^{\alpha_0}\mathrm dr\\
&=\sum_{j=0}^\infty\dfrac{\mu^{p-1+j}}{\Gamma(p+j)}\dfrac{\|v_t\|^{(p-1+j)p'}_{L^{(p-1+j)p'}_{\alpha_0}}}{\left(\|v_t'\|^p_{L^p_{p-1}}+\|v_t\|^p_{L^p_{\alpha_0}}\right)^{\frac{p-1+j}{p-1}}}\\
&\geq\dfrac{\mu^{p-1}}{\Gamma(p)}\left[\dfrac{\|v\|_{L^p_{\alpha_0}}^p}{t\|v'\|^p_{L^{p}_{p-1}}+\|v\|^p_{L^p_{\alpha_0}}}+\dfrac{\mu}{p}\dfrac{t^{\frac1{p-1}}\|v\|^{pp'}_{L_{\alpha_0}^{pp'}}}{(t\|v'\|^p_{L^p_{p-1}}+\|v\|^p_{L^p_{\alpha_0}})^{p'}}\right]\\
&=:\dfrac{\mu^{p-1}}{\Gamma(p)}f_{p,\mu,v}(t).
\end{align*}
Since $f_{p,\mu,v}(0)=1$, to conclude $d_{p,\mu}>\frac{\mu^{p-1}}{\Gamma(p)}$ we only need to show $f'_{p,\mu,v}(t)>0$ if $0<t\ll1$.

Calculating $f'_{p,\mu,v}$, we have
\begin{equation*}
f'_{p,\mu,v}(t)=\dfrac{\|v'\|_{L^p_{p-1}}^p\|v\|_{L^p_{\alpha_0}}^p}{(t\|v'\|^p_{L^p_{p-1}}+\|v\|^p_{L^p_{\alpha_0}})^{2+\frac{1}{p-1}}}\dfrac1{t^{\frac{p-2}{p-1}}}g_{p,\mu,v}(t),
\end{equation*}
where
\begin{align*}
g_{p,\mu,v}(t)&=-t^{\frac{p-2}{p-1}}\left(t\|v'\|_{L^p_{p-1}}^p+\|v\|_{L^p_{\alpha_0}}^p\right)^{\frac{1}{p-1}}\\
&\quad+\dfrac{\mu}{p(p-1)}\dfrac{\|v\|_{L^{pp'}_{\alpha_0}}^{pp'}}{\|v'\|^p_{L^p_{p-1}}\|v\|_{L^{p}_{\alpha_0}}^{p}}\left(-t(p-1)\|v'\|^p_{L^p_{p-1}}+\|v\|_{L^{p}_{\alpha_0}}^{p}\right).
\end{align*}
If we prove that $g_{p,\mu,v}(0)>0$, proposition follows by continuity of $g_{p,\mu,v}\colon[0,\infty)\to\mathbb R$.

Firstly, suppose $p>2$. Then, for any $v\in X^{1,p}_\infty\backslash\{0\}$, we have
\begin{equation*}
g_{p,\mu,v}(0)=\dfrac{\mu}{p(p-1)}\dfrac{\|v\|_{L^{pp'}_{\alpha_0}}^{pp'}}{\|v'\|^p_{L^p_{p-1}}}>0.
\end{equation*}

The case $p=2$ is more delicate. In this case, we demand $\mu>2/B_{2,\alpha_0}$. By Proposition \ref{lemmab2} (with $p=2,q=4$ and $\alpha_1=1$) there exists $u_0\in X^{1,2}_\infty$ such that
\begin{equation*}
B_{2,\alpha_0}=\dfrac{\|u_0\|^4_{L^4_{\alpha_0}}}{\|u_0'\|^2_{L^2_1}\|u_0\|^2_{L^2_{\alpha_0}}}.
\end{equation*}
Therefore,
\begin{equation*}
g_{2,\mu,u_0}(0)=-\|u_0\|_{L^2_{\alpha_0}}^2+\dfrac{\mu}{2}\dfrac{\|u_0\|_{L^{4}_{\alpha_0}}^4}{\|u_0'\|^2_{L^2_1}}=\|u_0\|_{L^2_{\alpha_0}}\left(-1+\dfrac{\mu}{2}B_{2,\alpha_0}\right)>0,
\end{equation*}
which is our claim.
\end{proof}

The following proposition generalizes \cite[Proposition 7.1]{MR4097244}, because we do not assume $p=2$, $q=4$, $\alpha_1=1$, and $\alpha_0\geq0$. Our proof is motivated by   \cite[Theorem B]{MR0691044}. (see also \cite{CLZ-AIM} in the nonradial case.)

\begin{prop}\label{lemmab2}
Let $X^{1,p}_\infty(\alpha_0,\alpha_1)$ be the weighted Sobolev space with $1\leq p<\infty$, $\alpha_1-p+1\geq0$ and $\alpha_0>\alpha_1-p$. Given $p\leq q<(\alpha_0+1)p/(\alpha_1-p+1)$, then the following supremum
\begin{equation*}
B_{p,\alpha_0}=\sup_{u\in X^{1,p}_\infty\backslash\{0\}}\dfrac{\|u\|^q_{L^q_{\alpha_0}}}{\|u'\|^{\frac{(\alpha_0+1)(q-p)}{\alpha_0+p-\alpha_1}}_{L^p_{\alpha_1}}\|u\|^{\frac{p(\alpha_0+1)-q(\alpha_1-p+1)}{\alpha_0+p-\alpha_1}}_{L^p_{\alpha_0}}}
\end{equation*}
is attained by a nonnegative function $u_0\in X^{1,p}_\infty$ such that $\|u_0\|_{L^p_{\alpha_0}}=\|u_0'\|_{L^p_{\alpha_1}}=1$.
\end{prop}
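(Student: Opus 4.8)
The plan is to combine the two scaling invariances of the quotient with the compact embedding already furnished by Proposition~\ref{propimersaoinfinito}. Write $a=\frac{(\alpha_0+1)(q-p)}{\alpha_0+p-\alpha_1}$ and $b=\frac{p(\alpha_0+1)-q(\alpha_1-p+1)}{\alpha_0+p-\alpha_1}$, and set
\[
Q(u):=\frac{\|u\|^q_{L^q_{\alpha_0}}}{\|u'\|^{a}_{L^p_{\alpha_1}}\,\|u\|^{b}_{L^p_{\alpha_0}}}.
\]
First I would verify that $Q$ is invariant under both the amplitude scaling $u\mapsto\lambda u$ and the spatial dilation $u\mapsto u(\sigma\,\cdot)$, for $\lambda,\sigma>0$. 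A direct computation gives $a+b=q$, which yields invariance under $u\mapsto\lambda u$; and under $u\mapsto u(\sigma\,\cdot)$ the three weighted norms transform so that numerator and denominator acquire the same factor $\sigma^{-(\alpha_0+1)}$, the hypothesis $\alpha_0>\alpha_1-p$ ensuring that these two one-parameter actions are independent. Hence I may compute $B_{p,\alpha_0}$ as the supremum of $\|u\|^q_{L^q_{\alpha_0}}$ over the normalized set $\mathcal N:=\{u\in X^{1,p}_\infty:\|u\|_{L^p_{\alpha_0}}=\|u'\|_{L^p_{\alpha_1}}=1\}$, on which $\|u\|_{X^{1,p}_\infty}=2^{1/p}$. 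The degenerate case $q=p$ is immediate: then $a=0$, $b=p$, so $Q\equiv1$ and any nonnegative element of $\mathcal N$ attains $B_{p,\alpha_0}=1$.

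For $p<q<p^*:=(\alpha_0+1)p/(\alpha_1-p+1)$, the continuous embedding $X^{1,p}_\infty\hookrightarrow L^q_{\alpha_0}$ (Proposition~\ref{propimersaoinfinito}, case $q\le p^*$) shows $0<B_{p,\alpha_0}<\infty$. I would take a maximizing sequence $(u_n)\subset\mathcal N$ with $\|u_n\|^q_{L^q_{\alpha_0}}\to B_{p,\alpha_0}$. Since $1<p<\infty$, $X^{1,p}_\infty$ is reflexive (it is a closed subspace of $L^p_{\alpha_0}\times L^p_{\alpha_1}$ via $u\mapsto(u,u')$), so along a subsequence $u_n\rightharpoonup u_0$ in $X^{1,p}_\infty$. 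The decisive ingredient is condition $\mathrm{(i)}$ of Proposition~\ref{propimersaoinfinito}: for $\theta=\alpha_0$ and $p<q<p^*$ the embedding $X^{1,p}_\infty\hookrightarrow L^q_{\alpha_0}$ is \emph{compact}, so $u_n\to u_0$ strongly in $L^q_{\alpha_0}$ and therefore $\|u_0\|^q_{L^q_{\alpha_0}}=B_{p,\alpha_0}>0$, giving $u_0\neq0$. Weak lower semicontinuity of the norms yields $\|u_0\|_{L^p_{\alpha_0}}\le1$ and $\|u_0'\|_{L^p_{\alpha_1}}\le1$; since $a>0$ (from $q>p$) and $b>0$ (from $q<p^*$), this forces $\|u_0'\|^{a}_{L^p_{\alpha_1}}\|u_0\|^{b}_{L^p_{\alpha_0}}\le1$ and hence $Q(u_0)\ge B_{p,\alpha_0}$. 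As $u_0\neq0$ is admissible, equality with the supremum holds, which in turn forces $\|u_0\|_{L^p_{\alpha_0}}=\|u_0'\|_{L^p_{\alpha_1}}=1$. Replacing $u_0$ by $|u_0|$ leaves all three norms unchanged (since $\bigl||u_0|'\bigr|=|u_0'|$ a.e.), so the maximizer may be taken nonnegative.

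The main obstacle is the genuine loss of compactness on the unbounded interval $(0,\infty)$, where a maximizing sequence can a priori vanish, concentrate at the origin, or split. The point that resolves it is that the subcritical window $p<q<p^*$ falls exactly inside the compact regime of Proposition~\ref{propimersaoinfinito}\,$\mathrm{(i)}$, so the compact embedding does the essential work; what remains is the bookkeeping of the scaling normalization—needed to upgrade a maximizing sequence to a \emph{bounded} one—and the check that the weak-limit norms cannot strictly drop, which is what pins the constraints $\|u_0\|_{L^p_{\alpha_0}}=\|u_0'\|_{L^p_{\alpha_1}}=1$. The borderline case $p=1$, where reflexivity is unavailable, would need a separate compactness argument, but it is not required for the applications (which use $p\ge2$).
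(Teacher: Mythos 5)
Your proof is correct and follows essentially the same route as the paper: normalize a maximizing sequence via the two-parameter scaling invariance so that $\|u_n\|_{L^p_{\alpha_0}}=\|u_n'\|_{L^p_{\alpha_1}}=1$, extract a weak limit, and use the compact embedding $X^{1,p}_\infty\hookrightarrow L^q_{\alpha_0}$ from Proposition~\ref{propimersaoinfinito}$\mathrm{(i)}$ together with weak lower semicontinuity to conclude. Your added observations (that equality forces both constraint norms to equal $1$, and that the $p=1$ case is not covered by the reflexivity argument) are accurate refinements of the paper's own argument rather than deviations from it.
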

\begin{proof}
Define
\begin{equation*}
J(u)=\dfrac{\|u\|^q_{L^q_{\alpha_0}}}{\|u'\|^{\frac{(\alpha_0+1)(q-p)}{\alpha_0+p-\alpha_1}}_{L^p_{\alpha_1}}\|u\|^{\frac{p(\alpha_0+1)-q(\alpha_1-p+1)}{\alpha_0+p-\alpha_1}}_{L^p_{\alpha_0}}}.
\end{equation*}
Fixed $\mu,\lambda>0$, set $u_{\lambda,\mu}(r):=\lambda u(\mu r)$. We claim
\begin{enumerate}
    \item[(i)] $\|u_{\lambda,\mu}\|_{L^p_{\alpha_0}}=\lambda\mu^{-\frac{\alpha_0+1}{p}}\|u\|_{L^p_{\alpha_0}}$;
    \item[(ii)] $\|u_{\lambda,\mu}'\|_{L^p_{\alpha_1}}=\lambda\mu^{\frac{p-\alpha_1-1}{p}}\|u'\|_{L^p_{\alpha_1}}$;
    \item[(iii)] $J(u_{\lambda,\mu})=J(u)$.
\end{enumerate}
We are only going to prove (iii), since (i) and (ii) follow by an easy calculation:

\begin{align*}
J(u_{\lambda,\mu})&=\dfrac{\lambda^q\mu^{-\alpha_0-1}\|u\|^q_{L^q_{\alpha_0}}}{(\lambda \mu^{\frac{p-\alpha_1-1}{p}}\|u'\|_{L^p_{\alpha_1}})^{\frac{(\alpha_0+1)(q-p)}{\alpha_0+p-\alpha_1}}(\lambda\mu^{-\frac{\alpha_0+1}{p}}\|u\|_{L^p_{\alpha_0}})^{\frac{p(\alpha_0+1)-q(\alpha_1-p+1)}{\alpha_0+p-\alpha_1}}}\\
&=\lambda^{q-\frac{(\alpha_0+1)(q-p)}{\alpha_0+p-\alpha_1}-\frac{p(\alpha_0+1)-q(\alpha_1-p+1)}{\alpha_0+p-\alpha_1}}\\
&\qquad\mu^{-\alpha_0-1+\frac{\alpha_1-p+1}{p}\frac{(\alpha_0+1)(q-p)}{\alpha_0+p-\alpha_1}+\frac{\alpha_0+1}p\frac{p(\alpha_0+1)-q(\alpha_1-p+1)}{\alpha_0+p-\alpha_1}}J(u)\\
&=J(u).
\end{align*}

Let $(v_n)$ be a maximizing sequence in $X^{1,p}_\infty$ for $B_{p,\alpha_0}$, i.e. $B_{p,\alpha_0}=\lim J(v_n)$. We can suppose $v_n\geq0$. Set $u_n:=(v_n)_{\lambda_n,\mu_n}$, where
\begin{equation*}
\lambda_n=\dfrac{\|v_n\|_{L^p_{\alpha_0}}^{\frac{\alpha_1-p+1}{\alpha_0+p-\alpha_1}}}{\|v'_n\|_{L^p_{\alpha_1}}^{\frac{\alpha_0+1}{\alpha_0+p-\alpha_1}}}\mbox{ and }\mu_n=\dfrac{\|v_n\|_{L^p_{\alpha_0}}^{\frac{p}{\alpha_0+p-\alpha_1}}}{\|v'_n\|_{L^p_{\alpha_1}}^{\frac{p}{\alpha_0+p-\alpha_1}}}.
\end{equation*}
Note that $u_n\in X^{1,p}_\infty$, $\|u_n\|_{L^p_{\alpha_0}}=\|u_n'\|_{L^p_{\alpha_1}}=1$ and $J(u_n)=J(v_n)\overset{n\to\infty}\longrightarrow B_{p,\alpha_0}$. The case $q=p$ is trivial so we can suppose $q>p$. Since $(u_n)$ is bounded in $X^{1,p}_\infty$, (up to subsequence) there exists $u_0\in X^{1,p}_\infty$ such that $u_n\rightharpoonup u_0$ in $X^{1,p}_\infty$. By Proposition \ref{propimersaoinfinito} the following embedding is compact
\begin{equation*}
X^{1,p}_\infty\hookrightarrow L^q_{\alpha_0}(0,\infty).
\end{equation*}
Thus, $u_n\to u_0$ in $L^q_{\alpha_0}$. The weak convergence $u_n\rightharpoonup u_0$ in $X^{1,p}_\infty$ guarantees $\|u_0\|_{L^p_{\alpha_0}},\|u_0'\|_{L^p_{\alpha_1}}\leq 1$. Using
\begin{equation}\label{eq38}
B_{p,\alpha_0}=\lim J(u_n)=\lim\|u_n\|^q_{L^q_{\alpha_0}}=\|u_0\|_{L^q_{\alpha_0}}^q,
\end{equation}
we have $u_0\not\equiv0$. By \eqref{eq38} we conclude
\begin{equation*}
B_{p,\alpha_0}=\|u_0\|^q_{L^q_{\alpha_0}}\leq J(u_0)\leq B_{p,\alpha_0}.
\end{equation*}
Therefore $J(u_0)=B_{p,\alpha_0}$
\end{proof}

\begin{lemma}\label{lemma33}
Suppose $p=2$, $q=4$ and $\alpha_1=1$ in Proposition \ref{lemmab2}. For any $\alpha_0\in(-1,\infty)$ we have
\begin{equation}\label{estimate}
B_{2,\alpha_0}\geq\dfrac{4}{(\alpha_0+1)\mathrm e\log2}.
\end{equation}
In particular, $B_{2,\alpha_0}>2/(\alpha_0+1)$.
\end{lemma}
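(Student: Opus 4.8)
The plan is to use only the easy (lower-bound) direction of the variational problem: since
\[
B_{2,\alpha_0}=\sup_{u\neq0}J(u),\qquad J(u):=\frac{\|u\|^4_{L^4_{\alpha_0}}}{\|u'\|^2_{L^2_1}\,\|u\|^2_{L^2_{\alpha_0}}},
\]
every admissible $u$ yields $B_{2,\alpha_0}\geq J(u)$, so it suffices to exhibit one good test function and read off a lower bound. I would work with the scaling-adapted family $u_s(r)=\mathrm e^{-r^s}$ for $s>0$, which clearly belongs to $X^{1,2}_\infty(\alpha_0,1)$ for every $\alpha_0>-1$, and then optimize over the free parameter $s$.

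The first step is to reduce the three weighted integrals to values of the Gamma function via the substitution $t=r^s$. Writing $a:=(\alpha_0+1)/s$, a direct computation gives $\|u_s'\|^2_{L^2_1}=s/4$, $\|u_s\|^2_{L^2_{\alpha_0}}=\Gamma(a)\,2^{-a}/s$ and $\|u_s\|^4_{L^4_{\alpha_0}}=\Gamma(a)\,4^{-a}/s$. The decisive point is that the common factor $\Gamma(a)/s$ cancels in the quotient, leaving the clean expression
\[
J(u_s)=\frac{4}{s}\,2^{-(\alpha_0+1)/s}.
\]

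The second step is to maximize this elementary function of $s$. Differentiating $\log J(u_s)=\log 4-\log s-(\alpha_0+1)(\log 2)/s$ produces the optimal value $s_*=(\alpha_0+1)\log 2$, at which $(\alpha_0+1)/s_*=1/\log 2$ and hence $2^{-(\alpha_0+1)/s_*}=\mathrm e^{-1}$. Substituting back gives precisely $J(u_{s_*})=4/\bigl((\alpha_0+1)\mathrm e\log 2\bigr)$, which is the claimed bound \eqref{estimate}. The asserted consequence $B_{2,\alpha_0}>2/(\alpha_0+1)$ then follows from the numerical inequality $\mathrm e\log 2<2$ (indeed $\mathrm e\log 2\approx 1.88$), since this makes $4/\bigl((\alpha_0+1)\mathrm e\log 2\bigr)>2/(\alpha_0+1)$.

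I do not expect any genuine obstacle here: the only real choice is the test family $\mathrm e^{-r^s}$, and the apparent difficulty — the Gamma factors generated by the weighted integrals — evaporates because the same argument $(\alpha_0+1)/s$ appears in both the $L^2_{\alpha_0}$ and $L^4_{\alpha_0}$ norms, so $\Gamma(a)$ cancels and one is reduced to a one-variable calculus optimization whose optimizer reproduces the constant $\mathrm e\log 2$ exactly.
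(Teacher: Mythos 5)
Your proposal is correct and follows essentially the same route as the paper: the paper also tests with $u(r)=\mathrm e^{-r^\gamma}$, reduces the three norms to Gamma-function values (obtaining $\|u'\|^2_{L^2_1}=\gamma/4$ and the cancellation of $\Gamma((\alpha_0+1)/\gamma)$ in the quotient), and optimizes $\gamma\mapsto 2^{2-(\alpha_0+1)/\gamma}/\gamma$ at $\gamma_0=(\alpha_0+1)\log 2$ to get the constant $4/((\alpha_0+1)\mathrm e\log 2)$. All your computations check out, including the final numerical comparison $\mathrm e\log 2<2$.
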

\begin{proof}
Let $u(r)=\mathrm{e}^{-r^\gamma}$, where $\gamma>0$ will be chosen later. Then
\begin{equation*}
\|u\|_{L^4_{\alpha_0}}^4=\int_0^\infty \mathrm{e}^{-4r^\gamma}r^{\alpha_0}\mathrm dr=\int_0^\infty \mathrm{e}^{-s}\left(\dfrac{s}4\right)^{\frac{\alpha_0+1-\gamma}{\gamma}}\dfrac{\mathrm ds}{4\gamma}=\dfrac{1}{4^{\frac{\alpha_0+1}\gamma}\gamma}\Gamma\left(\dfrac{\alpha_0+1}{\gamma}\right),
\end{equation*}
\begin{equation*}
\|u\|^2_{L^2_{\alpha_0}}=\int_0^\infty \mathrm{e}^{-2r^\gamma}r^{\alpha_0}\mathrm dr=\int_0^\infty \mathrm{e}^{-s}\left(\dfrac{s}2\right)^{\frac{\alpha_0+1-\gamma}{\gamma}}\dfrac{\mathrm ds}{2\gamma}=\dfrac{1}{2^{\frac{\alpha_0+1}\gamma}\gamma}\Gamma\left(\dfrac{\alpha_0+1}{\gamma}\right)
\end{equation*}
and
\begin{equation*}
\|u'\|_{L^2_1}^2=\gamma^2\int_0^\infty \mathrm{e}^{-2r^\gamma}r^{\gamma}r^{\gamma-1}\mathrm dr=\gamma^2\int_0^\infty \mathrm{e}^{-s}\dfrac{s}{2}\dfrac{\mathrm ds}{2\gamma}=\dfrac{\gamma}4\Gamma(2)=\dfrac\gamma4.
\end{equation*}
Thus, by definition of $B_{2,\alpha_0}$,
\begin{equation*}
B_{2,\alpha_0}\geq\dfrac{\|u\|^4_{L^4_{\alpha_0}}}{\|u'\|^2_{L^2_1}\|u\|^2_{L^2_{\alpha_0}}}=\dfrac{4.2^{\frac{\alpha_0+1}{\gamma}}}{\gamma.4^{\frac{\alpha_0+1}{\gamma}}}=\dfrac{2^{2-\frac{\alpha_0+1}\gamma}}{\gamma}.
\end{equation*}
Define $f_{\alpha_0}\colon(0,\infty)\to\mathbb R$ given by $f_{\alpha_0}(\gamma)=2^{2-\frac{\alpha_0+1}{\gamma}}/\gamma$. Since
\begin{equation*}
\gamma^2f'_{\alpha_0}(\gamma)=2^{2-\frac{\alpha_0+1}\gamma}\log2\dfrac{\alpha_0+1}{\gamma^2}\gamma-2^{2-\frac{\alpha_0+1}{\gamma}}=2^{2-\frac{\alpha_0+1}\gamma}\left(\dfrac{\alpha_0+1}\gamma\log2-1\right),
\end{equation*}
$\gamma_0:=(\alpha_0+1)\log2$ is the global maximum of $f_{\alpha_0}$. Therefore,
\begin{equation*}
B_{2,\alpha_0}\geq f_{\alpha_0}(\gamma_0)=\dfrac{2^{2-\frac{1}{\log2}}}{(\alpha_0+1)\log2}=\dfrac{4}{(\alpha_0+1)\mathrm e\log2},
\end{equation*}
which is the desired conclusion.
\end{proof}

\begin{lemma}\label{ilemma23}
It holds that
\begin{equation}\label{i21}
\nu_*\geq\dfrac{\mu^{p-1}}{\Gamma(p)}\eta_*\  (*=0,\infty)
\end{equation}
and
\begin{equation*}
1=\rho_0+\rho_\infty,\ 1\geq\eta_0+\eta_\infty,\ d_{p,\mu}=\nu_0+\nu_\infty.
\end{equation*}
\end{lemma}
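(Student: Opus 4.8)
The plan is to prove the four assertions independently, each reducing to additivity of the integral under the splitting $(0,\infty)=(0,R)\cup(R,\infty)$, the normalization $\|u_n\|_{X^{1,p}_\infty}=1$, and elementary properties of $\exp_p$. Throughout I take $(u_n)$ to be the fixed sequence of the Ishiwata setup, and for the last identity I use that it is a \emph{maximizing} sequence for $d_{p,\mu}$, so that $\lim_{n\to\infty}\int_0^\infty\exp_p(\mu|u_n|^{p'})r^{\alpha_0}\,\mathrm dr=d_{p,\mu}$.

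First, for the pointwise-type inequality $\nu_*\geq\frac{\mu^{p-1}}{\Gamma(p)}\eta_*$, I would isolate the $j=0$ term in the series $\exp_p(t)=\sum_{j=0}^\infty t^{p-1+j}/\Gamma(p+j)$. Since every summand is nonnegative for $t\geq0$, one has $\exp_p(t)\geq t^{p-1}/\Gamma(p)$; substituting $t=\mu|u_n|^{p'}$ and using $p'(p-1)=p$ yields the bound $\exp_p(\mu|u_n|^{p'})\geq\frac{\mu^{p-1}}{\Gamma(p)}|u_n|^p$ pointwise. Multiplying by $r^{\alpha_0}$, integrating over $(0,R)$ (respectively $(R,\infty)$), and passing to $\lim_{R\to\infty}\lim_{n\to\infty}$ gives $\nu_0\geq\frac{\mu^{p-1}}{\Gamma(p)}\eta_0$ and the analogue for $*=\infty$.

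Next, for $1=\rho_0+\rho_\infty$ and $1\geq\eta_0+\eta_\infty$, I would exploit that for \emph{every} fixed $n$ and $R>0$ the integral over $(0,\infty)$ equals the sum of the integrals over $(0,R)$ and $(R,\infty)$. Since $\int_0^\infty(|u_n'|^pr^{p-1}+|u_n|^pr^{\alpha_0})\,\mathrm dr=\|u_n\|_{X^{1,p}_\infty}^p=1$ for all $n$, fixing $R$, letting $n\to\infty$ (the two partial limits existing after passing to a subsequence), and then $R\to\infty$ produces $\rho_0+\rho_\infty=1$ exactly. The argument for $\eta$ is identical, except that here $\int_0^\infty|u_n|^pr^{\alpha_0}\,\mathrm dr=\|u_n\|_{L^p_{\alpha_0}}^p\leq\|u_n\|_{X^{1,p}_\infty}^p=1$, so the limits only yield $\eta_0+\eta_\infty\leq1$.

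Finally, for $d_{p,\mu}=\nu_0+\nu_\infty$, I would again split $\int_0^\infty\exp_p(\mu|u_n|^{p'})r^{\alpha_0}\,\mathrm dr=\int_0^R+\int_R^\infty$, take $\lim_{n\to\infty}$ for fixed $R$, and then $\lim_{R\to\infty}$; the left-hand side converges to $d_{p,\mu}$ independently of $R$ by the maximizing property, so it is identified with $\nu_0+\nu_\infty$. The only genuinely delicate point—and hence the main obstacle—is the bookkeeping of the double limits: one must guarantee that each of $\rho_*,\nu_*,\eta_*$ exists and is finite. This is where I would invoke Theorem \ref{theo0} to ensure $d_{p,\mu}<\infty$ (so $\nu_0$ is finite) and Lemma \ref{lemma75} to obtain a bound on $\int_R^\infty\exp_p(\mu|u_n|^{p'})r^{\alpha_0}\,\mathrm dr$ that is uniform in $n$ (so $\nu_\infty$ is controlled), passing to a further subsequence as needed so that all six limits exist simultaneously.
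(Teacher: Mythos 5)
Your proof is correct and follows essentially the same route as the paper: the inequality \eqref{i21} comes from retaining only the $j=0$ term of the series defining $\exp_p$, and the three identities come from additivity of the integral over $(0,R)\cup(R,\infty)$, the normalization $\|u_n\|_{X^{1,p}_\infty}=1$, and the maximizing property of $(u_n)$, with subsequences extracted so the double limits exist. The only cosmetic difference is that the paper phrases \eqref{i21} through the cut-off reformulation of Lemma \ref{il21}, whereas you apply the pointwise bound $\exp_p(t)\geq t^{p-1}/\Gamma(p)$ directly to the integrals over $(0,R)$ and $(R,\infty)$ appearing in the definitions of $\nu_*$ and $\eta_*$; this is equally valid and, if anything, more direct.
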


\begin{proof}
It is easy to see that Lemma \ref{il21} yields
\begin{align*}
v_*&=\lim_{R\to\infty}\lim_{n\to\infty}\sum_{j=0}^\infty\dfrac{\mu^{p-1+j}}{\Gamma(p+j)}\|u^*_{n,R}\|_{L_{\alpha_0}^{p'(p-1+j)}}^{p'(p-1+j)}\\
&\geq\lim_{R\to\infty}\lim_{n\to\infty}\dfrac{\mu^{p-1}}{\Gamma(p)}\|u^*_{n,R}\|^p_{L^p_{\alpha_0}}=\dfrac{\mu^{p-1}}{\Gamma(p)}\eta_*
\end{align*}
thus \eqref{i21} holds. All others relations follows breaking the integral over $(0,\infty)$ in $(0,R)$ and $(R,\infty)$ and taking $\lim_{R\to\infty}\lim_{n\to\infty}$ along suitable subsequences.
\end{proof}

\begin{lemma}\label{il24}
Let $\rho_*<1$ ($*=0,\infty$). Then we have
\begin{align}
d_{p,\mu}&\|u_{n,R}^*\|^p_{X^{1,p}_\infty}\geq\int_0^\infty\exp_p\left(\mu|u_{n,R}^*|^{p'}\right)r^{\alpha_0}\mathrm dr\nonumber\\
&+\left(\dfrac{1}{\|u^*_{n,R}\|^{p'}_{X^{1,p}_\infty}}-1\right)\int_0^\infty\left(\exp_p\left(\mu|u^*_{n,R}|^{p'}\right)-\dfrac{\mu^{p-1}}{\Gamma(p)}|u^*_{n,R}|^{p}\right)r^{\alpha_0}\mathrm dr.\label{i22}
\end{align}
\end{lemma}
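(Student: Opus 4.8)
The plan is to test the definition of $d_{p,\mu}$ against the normalized function $u^*_{n,R}/\|u^*_{n,R}\|_{X^{1,p}_\infty}$ and then compare the two resulting power series term by term. Write $s:=\|u^*_{n,R}\|_{X^{1,p}_\infty}$ and assume $s\in(0,1]$; this is the key role of the hypothesis $\rho_*<1$, since by Lemma \ref{il21} we have $s^p\to\rho_*<1$ as $n,R\to\infty$, so that $s\le1$ for all large $n$ and $R$, which is the regime in which the estimate is used. Since the normalized function has unit norm, the definition of $d_{p,\mu}$ gives
\begin{equation*}
\int_0^\infty\exp_p\!\left(\frac{\mu}{s^{p'}}\,|u^*_{n,R}|^{p'}\right)r^{\alpha_0}\,\mathrm dr\le d_{p,\mu}.
\end{equation*}

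Next I would expand the left-hand side in the defining series of $\exp_p$. Setting
\begin{equation*}
A_j:=\frac{\mu^{p-1+j}}{\Gamma(p+j)}\int_0^\infty|u^*_{n,R}|^{(p-1+j)p'}r^{\alpha_0}\,\mathrm dr\ge0,\quad j\ge0,
\end{equation*}
Tonelli's theorem (all terms are nonnegative) justifies interchanging sum and integral. Using the identity $(p-1)p'=p$, so that $s^{p'(p-1+j)}=s^{p}s^{p'j}$, the displayed inequality becomes
\begin{equation*}
\frac{1}{s^p}\sum_{j=0}^\infty\frac{A_j}{s^{p'j}}\le d_{p,\mu},\qquad\text{equivalently}\qquad \sum_{j=0}^\infty\frac{A_j}{s^{p'j}}\le s^pd_{p,\mu}=d_{p,\mu}\,\|u^*_{n,R}\|^p_{X^{1,p}_\infty}.
\end{equation*}
I would then identify the two relevant series: using $(p-1)p'=p$ again, $A_0=\frac{\mu^{p-1}}{\Gamma(p)}\|u^*_{n,R}\|^p_{L^p_{\alpha_0}}$, while $\sum_{j\ge0}A_j=\int_0^\infty\exp_p(\mu|u^*_{n,R}|^{p'})r^{\alpha_0}\,\mathrm dr$ and $\sum_{j\ge1}A_j=\int_0^\infty\bigl(\exp_p(\mu|u^*_{n,R}|^{p'})-\frac{\mu^{p-1}}{\Gamma(p)}|u^*_{n,R}|^{p}\bigr)r^{\alpha_0}\,\mathrm dr$.

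Finally I would carry out the term-by-term comparison. Because $0<s\le1$ and $p'j\ge p'$ for every $j\ge1$, we have $s^{p'j}\le s^{p'}$, hence $1/s^{p'j}\ge1/s^{p'}$; consequently
\begin{equation*}
\sum_{j=0}^\infty\frac{A_j}{s^{p'j}}=A_0+\sum_{j=1}^\infty\frac{A_j}{s^{p'j}}\ge A_0+\frac{1}{s^{p'}}\sum_{j=1}^\infty A_j.
\end{equation*}
Rewriting the right-hand side as $\sum_{j\ge0}A_j+(s^{-p'}-1)\sum_{j\ge1}A_j$ and substituting the integral expressions from the previous step yields exactly \eqref{i22}. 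The only genuine obstacle is the monotonicity step $1/s^{p'j}\ge1/s^{p'}$, which fails as soon as $s>1$; this is precisely why the hypothesis $\rho_*<1$—forcing $s\le1$ for large $n,R$—is indispensable. Everything else is the bookkeeping of nonnegative series, justified by Tonelli's theorem.
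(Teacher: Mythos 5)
Your proof is correct and follows essentially the same route as the paper: test the definition of $d_{p,\mu}$ on the normalized function, expand $\exp_p$ as a nonnegative series, and use $\|u^*_{n,R}\|_{X^{1,p}_\infty}^{p'j}\le\|u^*_{n,R}\|_{X^{1,p}_\infty}^{p'}<1$ for $j\ge1$ (valid for $n,R$ large by Lemma \ref{il21} and $\rho_*<1$) to bound the tail termwise. Your explicit remark on where $\rho_*<1$ enters matches the paper's use of that hypothesis exactly.
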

\begin{proof}
By the definition of $d_{p,\mu}$, it is clear that
\begin{align*}
d_{p,\mu}&\geq\int_0^\infty\exp_p\left(\mu\left(\dfrac{u_{n,R}^*}{\|u_{n,R}^*\|_{X^{1,p}_\infty}}\right)^{p'}\right)r^{\alpha_0}\mathrm dr\\
&=\sum_{j=0}^\infty\dfrac{\mu^{p-1+j}}{\Gamma(p+j)}\dfrac{\|u^*_{n,R}\|^{p'(p-1+j)}_{L^{p'(p-1+j)}_{\alpha_0}}}{\|u^*_{n,R}\|^{p'(p-1+j)}_{X^{1,p}_\infty}}\\
&=\dfrac{1}{\|u^*_{n,R}\|^{p}_{X^{1,p}_\infty}}\sum_{j=0}^\infty\dfrac{\mu^{p-1+j}}{\Gamma(p+j)}\dfrac{\|u^*_{n,R}\|^{p'(p-1+j)}_{L^{p'(p-1+j)}_{\alpha_0}}}{\|u^*_{n,R}\|^{pj}_{X^{1,p}_\infty}}\\
&=\dfrac{1}{\|u^*_{n,R}\|^{p}_{X^{1,p}_\infty}}\Bigg[\sum_{j=0}^\infty\dfrac{\mu^{p-1+j}}{\Gamma(p+j)}\|u^*_{n,R}\|^{p'(p-1+j)}_{L_{\alpha_0}^{p'(p-1+j)}}\\
&\quad+\sum_{j=0}^\infty\left(\dfrac{1}{\|u^*_{n,R}\|^{p'j}_{X^{1,p}_\infty}}-1\right)\dfrac{\mu^{p-1+j}}{\Gamma(p+j)}\|u_{n,R}^*\|^{p'(p-1+j)}_{L_{\alpha_0}^{p'(p-1+j)}}\Bigg].
\end{align*}
By multiplying this relation with $\|u_{n,R}^*\|^p_{X^{1,p}_\infty}$, we obtain
\begin{align}
d_{p,\mu}\|u_{n,R}^*\|^p_{X^{1,p}_\infty}&\geq\int_0^\infty\exp_p\left(\mu|u_{n,R}^*|^{p'}\right)r^{\alpha_0}\mathrm dr\nonumber\\
&\quad+\sum_{j=1}^\infty\left(\dfrac{1}{\|u^*_{n,R}\|^{p'j}_{X^{1,p}_\infty}}-1\right)\dfrac{\mu^{p-1+j}}{\Gamma(p+j)}\|u_{n,R}^*\|^{p'(p-1+j)}_{L_{\alpha_0}^{p'(p-1+j)}}.\label{i23}
\end{align}
In view of Lemma \ref{il21} and the assumption $\rho_*<1$, for $j\geq1$, we have
\begin{equation*}
\|u_{n,R}\|_{X^{1,p}_\infty}^{p'j}\leq \|u_{n,R}\|_{X^{1,p}_\infty}^{p'}<1
\end{equation*}
for $R$ and $n$ sufficient large. Hence, for such $R$ and $n$, we see that
\begin{align*}
\sum_{j=1}^\infty&\left(\dfrac{1}{\|u^*_{n,R}\|^{p'j}_{X^{1,p}_\infty}}-1\right)\dfrac{\mu^{p-1+j}}{\Gamma(p+j)}\|u_{n,R}^*\|^{p'(p-1+j)}_{L_{\alpha_0}^{p'(p-1+j)}}\\
&\geq\left(\dfrac{1}{\|u^*_{n,R}\|^{p'}_{X^{1,p}_\infty}}-1\right)\sum_{j=1}^\infty\dfrac{\mu^{p-1+j}}{\Gamma(p+j)}\|u_{n,R}^*\|^{p'(p-1+j)}_{L_{\alpha_0}^{p'(p-1+j)}}\\
&=\left(\dfrac{1}{\|u^*_{n,R}\|^{p'}_{X^{1,p}_\infty}}-1\right)\int_0^\infty\left[\exp_p\left(\mu|u_{n,R}^*|^{p'}\right)-\dfrac{\mu^{p-1}}{\Gamma(p)}|u_{n,R}^*|^{p}\right]r^{\alpha_0}\mathrm dr.
\end{align*}
We have the conclusion by combining this relation with \eqref{i23}.
\end{proof}

\begin{prop}\label{ip23}
Under the assumptions of Proposition \ref{ip22} we have
\begin{equation*}
(\rho_0,\nu_0)=(1,d_{p,\mu})\mbox{ and }(\rho_\infty,\nu_\infty)=(0,0).
\end{equation*}
\end{prop}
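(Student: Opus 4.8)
The plan is to show that for the maximizing sequence $(u_n)$ underlying Lemma \ref{ilemma23} (so that $1=\rho_0+\rho_\infty$, $d_{p,\mu}=\nu_0+\nu_\infty$ and $\nu_*\ge\frac{\mu^{p-1}}{\Gamma(p)}\eta_*$ all hold) no mass can leak to infinity. The whole argument rests on an \emph{asymmetry} between the behavior of the exponential integral near the origin and near infinity, combined with the strict inequality $d_{p,\mu}>\mu^{p-1}/\Gamma(p)$ supplied by Proposition \ref{ip22}.

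First I would record two bounds. For the bounded region I apply Lemma \ref{il24} with $*=0$: since $\exp_p(\mu|t|^{p'})-\frac{\mu^{p-1}}{\Gamma(p)}|t|^{p}\ge0$ pointwise and, when $\rho_0<1$, the factor $\|u_{n,R}^0\|_{X^{1,p}_\infty}^{-p'}-1$ is positive, the last term in \eqref{i22} is nonnegative and may be discarded. This yields $d_{p,\mu}\|u_{n,R}^0\|^p_{X^{1,p}_\infty}\ge\int_0^\infty\exp_p(\mu|u_{n,R}^0|^{p'})r^{\alpha_0}\mathrm dr$, and passing to the limit through Lemma \ref{il21} gives $d_{p,\mu}\rho_0\ge\nu_0$. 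For the exterior region I would instead exploit the radial decay: the estimate \eqref{dente5} was derived in the proof of Proposition \ref{ip21} using only the pointwise bound \eqref{dente7}, hence only the boundedness of $(u_n)$ in $X^{1,p}_\infty$, so it applies verbatim here and gives $\nu_\infty=\frac{\mu^{p-1}}{\Gamma(p)}\eta_\infty$. Since $\eta_\infty\le\rho_\infty$ by definition, this produces the crucial asymmetric estimate $\nu_\infty\le\frac{\mu^{p-1}}{\Gamma(p)}\rho_\infty$.

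With these in hand I argue by contradiction: suppose $\rho_\infty>0$, so that $\rho_0=1-\rho_\infty<1$ and the first bound is available. Combining the two bounds with Lemma \ref{ilemma23},
\begin{equation*}
d_{p,\mu}=\nu_0+\nu_\infty\le d_{p,\mu}\rho_0+\frac{\mu^{p-1}}{\Gamma(p)}\rho_\infty=d_{p,\mu}(1-\rho_\infty)+\frac{\mu^{p-1}}{\Gamma(p)}\rho_\infty,
\end{equation*}
which rearranges to $\rho_\infty\left(d_{p,\mu}-\frac{\mu^{p-1}}{\Gamma(p)}\right)\le0$. Because Proposition \ref{ip22} guarantees $d_{p,\mu}>\mu^{p-1}/\Gamma(p)$, the parenthesis is strictly positive, forcing $\rho_\infty\le0$ and contradicting $\rho_\infty>0$. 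Hence $\rho_\infty=0$, so $\rho_0=1$; then $\eta_\infty\le\rho_\infty=0$ gives $\nu_\infty=\frac{\mu^{p-1}}{\Gamma(p)}\eta_\infty=0$, and finally $\nu_0=d_{p,\mu}-\nu_\infty=d_{p,\mu}$, which is exactly the claim.

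The main obstacle is the exterior estimate $\nu_\infty\le\frac{\mu^{p-1}}{\Gamma(p)}\rho_\infty$: it is essential that at infinity only the leading term $\frac{\mu^{p-1}}{\Gamma(p)}|u_n|^p$ of $\exp_p(\mu|u_n|^{p'})$ contributes, all higher-order terms being annihilated by the radial decay of Lemma \ref{lemma53}. This is precisely what breaks the symmetry between $\rho_0$ and $\rho_\infty$ — near the origin one only has the weaker bound $\nu_0\le d_{p,\mu}\rho_0$ — and lets the strict inequality of Proposition \ref{ip22} do its work. One should note in passing that the argument automatically covers the total-escape case $\rho_0=0$: there the discarded-term bound still yields $\nu_0\le d_{p,\mu}\cdot0=0$, so no separate treatment is required.
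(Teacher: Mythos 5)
Your proof is correct, but it takes a genuinely different route from the paper's. The paper applies Lemma \ref{il24} for both $*=0$ and $*=\infty$, first establishes the dichotomy $\rho_0\in\{0,1\}$ (by summing the two inequalities $d_{p,\mu}\rho_*\geq\nu_*$ and deriving $d_{p,\mu}\leq\mu^{p-1}/\Gamma(p)$ when $0<\rho_0<1$), proves separately that $\rho_*=0$ implies $\nu_*=0$, and finally excludes the total-vanishing alternative $(\rho_0,\nu_0)=(0,0)$ by observing that $(u_n)$ would then be a normalized vanishing sequence, so that Proposition \ref{ip21} forces $d_{p,\mu}=\nu_\infty\leq d_{NVL}(p,\mu)=\mu^{p-1}/\Gamma(p)$, contradicting Proposition \ref{ip22}. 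You instead use Lemma \ref{il24} only for $*=0$, and replace both the $*=\infty$ bound and the appeal to the statement of Proposition \ref{ip21} by the identity $\nu_\infty=\frac{\mu^{p-1}}{\Gamma(p)}\eta_\infty\leq\frac{\mu^{p-1}}{\Gamma(p)}\rho_\infty$, which you correctly extract from \eqref{ish13} and \eqref{dente5}: those estimates rest only on the radial decay \eqref{dente7} and the norm bound $\|u_n\|_{X^{1,p}_\infty}\leq1$, not on the vanishing property (indeed the paper itself implicitly uses this when applying \eqref{dente5} to the auxiliary sequence $\psi_n$). Your single linear combination $d_{p,\mu}\leq d_{p,\mu}(1-\rho_\infty)+\frac{\mu^{p-1}}{\Gamma(p)}\rho_\infty$ then handles all cases $0\leq\rho_0<1$ at once and lets the strict inequality of Proposition \ref{ip22} finish the job. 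What each approach buys: the paper's version is modular, reusing Proposition \ref{ip21} as a black box and framing the exclusion of vanishing conceptually through the normalized vanishing limit; yours is shorter, avoids the case analysis entirely, and isolates the real mechanism (at infinity only the leading term of $\exp_p$ survives), at the mild cost of having to reopen the proof of Proposition \ref{ip21} to certify that \eqref{dente5} is independent of the (NVS) hypothesis.
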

\begin{proof}
We divided the proof into two claims.

\medskip

\noindent\textbf{Claim 1:} $\rho_0=0$ or $\rho_0=1$.

\noindent Suppose Claim 1 was false. Then  $0<\rho_0<1$ and, by Lemma \ref{ilemma23}, we also have $0<\rho_\infty<1$. Taking the limits as $n\to\infty$ and $R\to\infty$ in \eqref{i22} (along suitable subsequences) and using Lemma \ref{il21}, we obtain
\begin{equation}\label{i28}
d_{p,\mu}\rho_*\geq\nu_*+\left(\dfrac1{\rho_*^{\frac{1}{p-1}}}-1\right)\left(\nu_*-\dfrac{\mu^{p-1}}{\Gamma(p)}\eta_*\right).
\end{equation}
Lemma \ref{ilemma23} and $0<\rho_*<1$ ($*=0,\infty$) together with \eqref{i28} guarantee
\begin{equation*}
d_{p,\mu}\rho_*\geq\nu_*,\quad*=0,\infty.
\end{equation*}
Summing these inequalities with $*=0$ and $*=\infty$ we get
\begin{equation*}
d_{p,\mu}=d_{p,\mu}(\rho_0+\rho_\infty)\geq\nu_0+\nu_\infty=d_{p,\mu},
\end{equation*}
where we used Lemma \ref{ilemma23} again. Then $d_{p,\mu}\rho_*=\nu_*$, $*=0,\infty$. By \eqref{i28},
\begin{equation*}
\nu_*=\dfrac{\mu^{p-1}}{\Gamma(p)}\eta_*,\quad*=0,\infty.
\end{equation*}
Note that
\begin{equation*}
d_{p,\mu}=\nu_0+\nu_\infty=\dfrac{\mu^{p-1}}{\Gamma(p)}(\eta_0+\eta_\infty)\leq\dfrac{\mu^{p-1}}{\Gamma(p)}
\end{equation*}
constradicts Proposition \ref{ip22}. This concludes the proof of Claim 1.

\medskip

\noindent\textbf{Claim 2:} $\rho_*=0\Rightarrow\nu_*=0$ for $*=0,\infty$.

\noindent Indeed, from Lemma \ref{il21},
\begin{equation*}
\dfrac{1}{\|u_{n,R}^*\|^{p'}_{X^{1,p}_\infty}}-1>\dfrac12
\end{equation*}
for $R$ and $n$ sufficient large. This relation with \eqref{i28} yield
\begin{equation*}
0=d_{p,\mu}\rho_*\geq\nu_*+\dfrac12\left(\nu_*-\dfrac{\mu^{p-1}}{\Gamma(p)}\eta_*\right)\geq \nu_*.
\end{equation*}
Therefore $\nu_*=0$ concludes Claim 2.

\medskip

Finally, suppose the proposition is false. By Claims 1 and 2 we have
\begin{equation*}
(\rho_0,\nu_0)=(0,0)\mbox{ and }(\rho_\infty,\nu_\infty)=(1,d_{p,\mu}).
\end{equation*}
Note that $(u_n)$ is a normalized vanishing sequence. Then, Proposition \ref{ip21} guarantees
\begin{equation*}
d_{p,\mu}=\nu_\infty=\lim_{R\to\infty}\lim_{n\to\infty}\int_R^\infty\exp_p\left(\mu u_n^{p'}\right)r^{\alpha_0}\mathrm dr\leq d_{NVL}(p,\mu)=\dfrac{\mu^{p-1}}{\Gamma(p)},
\end{equation*}
which is absurd in view of Proposition \ref{ip22}.
\end{proof}

\begin{lemma}\label{lemmacompexp}
Suppose $\alpha_0>-1$ and $0<\mu<\alpha_0+1$. Then
\begin{equation*}
\int_0^\infty\left[\exp_p(\mu|u_n|^{p'})-\dfrac{\mu^{p-1}|u_n|^p}{\Gamma(p)}\right]r^{\alpha_0}\mathrm dr\overset{n\to\infty}\longrightarrow\int_0^\infty\left[\exp_p(\mu|u|^{p'})-\dfrac{\mu^{p-1}|u|^p}{\Gamma(p)}\right]r^{\alpha_0}\mathrm dr.
\end{equation*}
\end{lemma}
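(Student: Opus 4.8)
The plan is to write $G(t):=\exp_p(\mu|t|^{p'})-\tfrac{\mu^{p-1}}{\Gamma(p)}|t|^{p}=\sum_{j\ge1}\tfrac{\mu^{p-1+j}}{\Gamma(p+j)}|t|^{p'(p-1+j)}\ge0$, which is continuous, vanishes at $t=0$ like its leading power $|t|^{pp'}$ (note $pp'=p'p>p$), satisfies $G(t)\le C_\mu|t|^{pp'}$ on $[0,1]$ with $C_\mu=\exp_p(\mu)-\tfrac{\mu^{p-1}}{\Gamma(p)}$, and obeys $G(t)\le\exp_p(\mu|t|^{p'})\le\mathrm e^{\mu|t|^{p'}}$ by Lemma \ref{lemmaphiN}. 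I would then establish $G(u_n)\to G(u)$ in $L^1_{\alpha_0}(0,\infty)$ by splitting $\int_0^\infty=\int_0^R+\int_R^\infty$, controlling the tail uniformly in $n$ and passing to the limit on $(0,R)$ via a uniform-integrability (Vitali) argument.

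For the pointwise input, recall that $\alpha_1=p-1$ places us in the Sobolev-limit regime with $\alpha_0>-1$ and $q>p$, so Proposition \ref{propimersaoinfinito} yields the compact embedding $X^{1,p}_\infty\hookrightarrow L^q_{\alpha_0}$; hence $u_n\to u$ strongly in $L^q_{\alpha_0}$ for every $q\in(p,\infty)$, and after passing to a subsequence $u_n\to u$ a.e., so $G(u_n)\to G(u)$ a.e. by continuity. A standard subsequence argument will upgrade the final conclusion from the subsequence back to the full sequence.

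The decisive step is a uniform $L^q$-bound with $q>1$, and this is exactly where $\mu<\alpha_0+1$ is used: I choose $q=1+\delta>1$ so small that $q\mu<\alpha_0+1$. On $\{|u_n|\le1\}$ one has $G(u_n)^q\le C\,|u_n|^{qpp'}$, and since $qpp'>p$ the continuous embedding into $L^{qpp'}_{\alpha_0}$ bounds $\int_{\{|u_n|\le1\}}G(u_n)^q r^{\alpha_0}\,\mathrm dr$ uniformly in $n$. On $\{|u_n|>1\}$ I use $G\le\mathrm e^{\mu|\cdot|^{p'}}$ together with $\exp_p(s)/\mathrm e^{s}\to1$ (Lemma \ref{lemmaphiN}), which gives $\mathrm e^{q\mu t^{p'}}\le C\exp_p(q\mu t^{p'})$ for $t\ge1$, whence $\int_{\{|u_n|>1\}}G(u_n)^q r^{\alpha_0}\,\mathrm dr\le C\int_0^\infty\exp_p(q\mu|u_n|^{p'})r^{\alpha_0}\,\mathrm dr\le C\,d_{p,q\mu,\alpha_0}$, finite and $n$-uniform by Theorem \ref{theo0}(b) because $q\mu<\alpha_0+1$ and $\|u_n\|_{X^{1,p}_\infty}\le1$. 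Thus $\sup_n\|G(u_n)\|_{L^q_{\alpha_0}}<\infty$ with $q>1$, forcing $\{G(u_n)r^{\alpha_0}\}$ to be uniformly integrable; combined with the a.e.\ convergence and the finiteness of the weighted measure of $(0,R)$ (valid since $\alpha_0>-1$), Vitali's theorem gives $\int_0^R G(u_n)r^{\alpha_0}\,\mathrm dr\to\int_0^R G(u)r^{\alpha_0}\,\mathrm dr$ for each fixed $R$.

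For tightness I reuse the radial decay of Lemma \ref{lemma53} precisely as in the proof of Proposition \ref{ip21}: the interpolation estimate \eqref{dente8} applied to the leading term (with $\gamma=pp'$) and the bound \eqref{dente6} for the terms $j\ge2$ yield $\lim_{R\to\infty}\sup_n\int_R^\infty G(u_n)r^{\alpha_0}\,\mathrm dr=0$, and by Fatou the same tail bound holds for $u$. Combining tail smallness with the interior convergence proves the lemma. I expect the uniform-integrability step to be the main obstacle: the finiteness of $d_{p,\mu}$ only furnishes an $L^1_{\alpha_0}$-bound, and one must genuinely manufacture an $L^q$-bound with $q>1$; it is precisely the strict slack in $\mu<\alpha_0+1$ that keeps the raised exponent $q\mu$ subcritical so that Theorem \ref{theo0}(b) still applies uniformly.
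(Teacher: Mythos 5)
Your argument is correct, and it reaches the conclusion by a genuinely different route from the paper's. The paper splits the difference into four pieces: the $j=1$ term is handled by the compact embedding into $L^{pp'}_{\alpha_0}$, the exponential difference on the bounded interval $(0,R)$ is treated by the Mean Value Theorem followed by a three-exponent H\"older inequality (choosing $p_2$ with $p_2\mu<\alpha_0+1$ and invoking the bounded-interval bound of Theorem~\ref{theo01}(b)), the tail series of $u_n$ over $(R,\infty)$ is controlled by the radial decay of Lemma~\ref{lemma53}, and the tail of $u$ by Lemma~\ref{lemma75}. You instead treat the whole nonnegative series $G(u_n)$ at once: a.e.\ convergence from the compact embedding plus a uniform $L^{1+\delta}_{\alpha_0}$ bound (splitting $\{|u_n|\le1\}$ via $G(t)\le C_\mu|t|^{pp'}$ and the embedding into $L^{(1+\delta)pp'}_{\alpha_0}$, and $\{|u_n|>1\}$ via $e^{s}\le C\exp_p(s)$ for $s\ge(1+\delta)\mu$ together with the global Theorem~\ref{theo0}(b) at the raised exponent $(1+\delta)\mu<\alpha_0+1$) yields uniform integrability and hence Vitali convergence on $(0,R)$; the tail is handled exactly as in the paper via \eqref{dente6} and \eqref{dente8}. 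Both proofs exploit the strict inequality $\mu<\alpha_0+1$ in the same way --- raising the exponent slightly while staying subcritical --- but your version replaces the MVT/H\"older manipulation of the pure exponential by a single uniform-integrability step, at the price of invoking the full-line Trudinger--Moser bound rather than only the bounded-interval one; it also absorbs the $j=1$ term into the general scheme instead of isolating it. The only housekeeping to make explicit is the subsequence bookkeeping for the a.e.\ convergence, which you already flag and which is standard since the limit is uniquely determined.
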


\begin{proof}
Fix $R>0$ to be chosen later. Note that
\begin{align*}
\Bigg|&\int_0^\infty\left[\exp_p(\mu|u_n|^{p'})-\dfrac{\mu^{p-1}|u_n|^p}{\Gamma(p)}\right]r^{\alpha_0}\mathrm dr-\int_0^\infty\left[\exp_p(\mu|u|^{p'})-\dfrac{\mu^{p-1}|u|^p}{\Gamma(p)}\right]r^{\alpha_0}\mathrm dr\Bigg|\\
&\quad\leq I_1+I_2+I_3+I_4,
\end{align*}
where
\begin{equation*}
    I_1=\int_0^R\left|\mathrm{e}^{\alpha|u_n|^{p'}}-\mathrm{e}^{\alpha|u|^{p'}}\right|r^{\alpha_0}\mathrm dr,\ I_2=\dfrac{\mu^p}{\Gamma(p+1)}\left|\|u_n\|_{L^{pp'}_{\alpha_0}}^{pp'}-\|u\|_{L^{pp'}_{\alpha_0}}^{pp'}\right|,
    \end{equation*}
    \begin{equation*}
    I_3=\sum_{j=2}^\infty\dfrac{\mu^{p-1+j}}{\Gamma(p+j)}\int_R^\infty|u_n|^{(p-1+j)p'}r^{\alpha_0}\mathrm dr\mbox{ and }I_4=\int_R^\infty\exp_p(\mu|u|^{p'})r^{\alpha_0}\mathrm dr.
\end{equation*}
Our proof is completed by showing that $I_1,I_2,I_3,I_4\overset{n\to\infty}\longrightarrow 0$ for $R$ sufficient large.

\medskip

Using the compact embedding $X^{1,p}_\infty\hookrightarrow L^{pp'}_{\alpha_0}$ we have $I_2\to0$.

\medskip

From Lemma \ref{lemma75} we have $\exp_p(\mu|u|^{p'})\in L^1_\theta(R,\infty)$ which implies that $I_4$ is small for $R>0$ sufficient large.

\medskip

It is enough to prove that
\begin{equation}\label{eqsono}
\lim_{n\to\infty}I_1=0,\quad\forall R>0
\end{equation}
and
\begin{equation}\label{eqsono2}
\lim_{R\to\infty}I_3=0\mbox{ uniformly in }n.
\end{equation}

Using Mean Value Theorem we get $v_n(r)$ between $u_n(r)$ and $u(r)$ satisfying $\|v_n\|_{L^{q}_{\alpha_0}}\leq\|u_n\|_{L^q_{\alpha_0}}+\|u_n\|_{L^q_{\alpha_0}}\leq C_q$, for all $q\geq1$, and
\begin{equation*}
    I_1=p'\mu\int_0^Rv_n^{\frac1{p-1}}\mathrm{e}^{\mu v_n^{p'}}|u_n-u|r^{\alpha_0}\mathrm dr.
\end{equation*}
For $p_1,p_3>1$ sufficient large such that $\frac1{p_1}+\frac1{p_2}+\frac1{p_3}=1$ and $p_2\mu<\alpha_0+1$, we have
\begin{align*}
I_1&\leq p'\mu \|v_n\|_{L^\frac{p_1}{p-1}_{\alpha_0}}^{\frac1{p-1}}\left(\int_0^R\mathrm{e}^{p_2\mu v_n^{p'}}r^{\alpha_0}\mathrm dr\right)^{\frac1{p_2}}\|u_n-u\|_{L^{p_3}_{\alpha_0}}\\
&\leq C\|u_n-u\|_{L^{p_3}_{\alpha_0}}\overset{n\to\infty}\longrightarrow0,
\end{align*}
where in the last estimate we used item (b) of Theorem \ref{theo01}. This concludes \eqref{eqsono}.

\medskip

Finally, \eqref{eqsono2} follows by Lemma \ref{lemma53} in the following way
\begin{align*}
    I_3&\leq\sum_{j=2}^\infty\dfrac{(\mu C^{p'})^{p-1+j}}{\Gamma(p+j)}\int_R^\infty r^{\alpha_0-\frac{(\alpha_0+1)(p-1+j)}{p}}\mathrm dr\\
    &\leq\dfrac{pR^{\alpha_0+1}}{\alpha_0+1}\exp_{p+2}(\mu C^{p'}R^{-\frac{\alpha_0+1}{p}})\overset{R\to\infty}\longrightarrow0.
\end{align*}
\end{proof}

\begin{prop}\label{propi25}
Assume the hypotheses of Proposition \ref{ip22}. If $\mu<\alpha_0+1$, then
\begin{equation*}
d_{p,\mu}=\int_0^\infty\exp_p(\mu|u|^{p'})r^{\alpha_0}\mathrm dr.
\end{equation*}
\end{prop}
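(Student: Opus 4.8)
The plan is to realize $d_{p,\mu}$ as attained by the weak limit of a maximizing sequence, using the concentration information already extracted. Let $(u_n)\subset X^{1,p}_\infty$ satisfy $\|u_n\|_{X^{1,p}_\infty}=1$ and $\int_0^\infty\exp_p(\mu|u_n|^{p'})r^{\alpha_0}\mathrm dr\to d_{p,\mu}$; passing to a subsequence, I may assume $u_n\rightharpoonup u$ weakly in $X^{1,p}_\infty$. Since the hypotheses of Proposition \ref{ip22} hold, Proposition \ref{ip23} yields $(\rho_\infty,\nu_\infty)=(0,0)$, and because $|u_n'|^pr^{p-1}\geq0$ we have $\eta_\infty\leq\rho_\infty=0$, so that $\eta_\infty=0$: no $L^p_{\alpha_0}$-mass leaks out to infinity.

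The decisive step is to promote the weak convergence into convergence of the $L^p_{\alpha_0}$-norms, i.e.\ $\|u_n\|_{L^p_{\alpha_0}}^p\to\|u\|_{L^p_{\alpha_0}}^p$. This is exactly where the difficulty lies: here $\alpha_1=p-1$, so $\alpha_1-p+1=0$ and $q=p$ is the borderline exponent at which the embedding $X^{1,p}_\infty\hookrightarrow L^p_{\alpha_0}(0,\infty)$ is merely continuous and not compact, so $L^p_{\alpha_0}$-convergence cannot simply be read off. To get around this I would localize: for each fixed $R$, the restriction $u_n|_{(0,R)}\rightharpoonup u|_{(0,R)}$ in $X^{1,p}_R(\alpha_0,p-1)$ together with the compact embedding $X^{1,p}_R\hookrightarrow L^p_{\alpha_0}(0,R)$ of Theorem \ref{theo32}(b) gives $\int_0^R|u_n|^pr^{\alpha_0}\mathrm dr\to\int_0^R|u|^pr^{\alpha_0}\mathrm dr$. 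Splitting $\|u_n\|_{L^p_{\alpha_0}}^p$ at $R$ and dropping the nonnegative tail yields $\limsup_n\|u_n\|_{L^p_{\alpha_0}}^p\leq\int_0^R|u|^pr^{\alpha_0}\mathrm dr+\limsup_n\int_R^\infty|u_n|^pr^{\alpha_0}\mathrm dr$; letting $R\to\infty$ and invoking $\eta_\infty=0$ gives $\limsup_n\|u_n\|_{L^p_{\alpha_0}}^p\leq\|u\|_{L^p_{\alpha_0}}^p$. The reverse inequality $\|u\|_{L^p_{\alpha_0}}^p\leq\liminf_n\|u_n\|_{L^p_{\alpha_0}}^p$ is just weak lower semicontinuity of the norm along $u_n\rightharpoonup u$ in $L^p_{\alpha_0}$, so the two limits coincide.

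With this the conclusion follows by recombining the series. I would write
\begin{equation*}
\int_0^\infty\exp_p(\mu|u_n|^{p'})r^{\alpha_0}\mathrm dr=\frac{\mu^{p-1}}{\Gamma(p)}\|u_n\|_{L^p_{\alpha_0}}^p+\int_0^\infty\left[\exp_p(\mu|u_n|^{p'})-\frac{\mu^{p-1}|u_n|^p}{\Gamma(p)}\right]r^{\alpha_0}\mathrm dr,
\end{equation*}
where the splitting uses $\exp_p(t)=t^{p-1}/\Gamma(p)+\cdots$ and $p'(p-1)=p$. The first summand tends to $\mu^{p-1}\|u\|_{L^p_{\alpha_0}}^p/\Gamma(p)$ by the previous step, and the second tends to $\int_0^\infty[\exp_p(\mu|u|^{p'})-\mu^{p-1}|u|^p/\Gamma(p)]r^{\alpha_0}\mathrm dr$ by Lemma \ref{lemmacompexp}, whose hypothesis $0<\mu<\alpha_0+1$ is in force. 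Adding the two limits reconstitutes $\int_0^\infty\exp_p(\mu|u|^{p'})r^{\alpha_0}\mathrm dr$, while the left-hand side tends to $d_{p,\mu}$, giving the identity.

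In short, the only real obstacle is the critical $L^p_{\alpha_0}$-convergence at $q=p$, where compactness fails on the whole half-line; all remaining work is bookkeeping. The function of Proposition \ref{ip23} is precisely to preclude the loss of mass at infinity (compactness on bounded intervals absorbs any loss on compact sets), and it is this non-escape, recorded as $\eta_\infty=0$, that lets the localization argument close.
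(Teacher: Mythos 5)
Your proposal is correct and follows essentially the same route as the paper: both proofs reduce the problem to showing $\|u_n\|_{L^p_{\alpha_0}}^p\to\|u\|_{L^p_{\alpha_0}}^p$ via the compact embedding of Theorem \ref{theo32} on bounded intervals plus the non-escape of mass at infinity supplied by Proposition \ref{ip23}, and then conclude by the same series splitting combined with Lemma \ref{lemmacompexp}. Your treatment of the borderline exponent $q=p$ (deducing $\eta_\infty\leq\rho_\infty=0$ and invoking weak lower semicontinuity for the reverse inequality) is simply a more explicit write-up of the step the paper compresses into one sentence.
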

\begin{proof}
First, we claim that
\begin{equation}\label{eqnhsal}
\lim_{n\to\infty}\int_0^\infty|u_n|^pr^{\alpha_0}\mathrm dr=\int_0^\infty|u|^pr^{\alpha_0}\mathrm dr.
\end{equation}
Indeed, note that, for any $R>0$,
\begin{equation*}
\left|\int_0^\infty(|u_n|^p-|u|^p)r^{\alpha_0}\mathrm dr\right|\leq\left|\int_0^R(|u_n|^p-|u|^p)r^{\alpha_0}\mathrm dr\right|+\int_R^\infty|u_n|^pr^{\alpha_0}\mathrm dr+\int_R^\infty|u|^pr^{\alpha_0}\mathrm dr.
\end{equation*}
Applying Theorem \ref{theo32} and Proposition \ref{ip23} we conclude \eqref{eqnhsal}.

\medskip

Since $(u_n)$ is a maximizing sequence we have
\begin{equation}\label{eqdsl}
    \int_0^\infty\exp_p(\mu|u_n|^{p'})r^{\alpha_0}\mathrm dr\overset{n\to\infty}\longrightarrow d_{p,\mu}.
\end{equation}
Lemma \ref{lemmacompexp} and \eqref{eqnhsal} implies
\begin{align*}
\int_0^\infty\exp_p(\mu|u_n|^{p'})r^{\alpha_0}\mathrm dr&=\int_0^\infty\left[\exp_p(\mu|u_n|^{p'})-\dfrac{\mu^{p-1}|u_n|^p}{\Gamma(p)}\right]r^{\alpha_0}\mathrm dr+\dfrac{\mu^{p-1}}{\Gamma(p)}\int_0^\infty|u_n|^pr^{\alpha_0}\mathrm dr\\
&\overset{n\to\infty}\longrightarrow\int_0^\infty\exp_p(\mu|u|^{p'})r^{\alpha_0}\mathrm dr,
\end{align*}
which together with \eqref{eqdsl} concludes the proof of our proposition.
\end{proof}

\begin{theo}
Let $N\geq2$ and $\alpha_0>-1$. Also, let
\begin{equation*}
B_{2,\alpha_0}=\sup_{u\in X^{1,2}_\infty\backslash\{0\}}\frac{\|u\|^4_{L^4_{\alpha_0}}}{\|u'\|_{L^2_1}^2\|u\|_{L^2_{\alpha_0}}^2}.
\end{equation*}
Then $d_{p,\mu}$ is attained for $0<\mu<\alpha_0+1$ if $N>2$ and for $2/B_{2,\alpha_0}<\mu<\alpha_0+1$ if $N=2$.
\end{theo}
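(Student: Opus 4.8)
The plan is to take a maximizing sequence, pass to a weak limit, and show the limit is itself admissible by combining the concentration analysis of Section~\ref{sec3} with a rescaling argument. Note first that the hypotheses of the statement---namely $p>2$ with $0<\mu<\alpha_0+1$, or $p=2$ with $2/B_{2,\alpha_0}<\mu<\alpha_0+1$---are exactly those of Proposition~\ref{ip22}, so in particular
\begin{equation*}
d_{p,\mu}>\frac{\mu^{p-1}}{\Gamma(p)}>0,
\end{equation*}
and in both cases $\mu<\alpha_0+1$, which is the standing assumption of Proposition~\ref{propi25} and guarantees (via Theorem~\ref{theo0}) that $d_{p,\mu}<\infty$.

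First I would choose $(u_n)\subset X^{1,p}_\infty$ with $\|u_n\|_{X^{1,p}_\infty}=1$ and
\begin{equation*}
\int_0^\infty\exp_p(\mu|u_n|^{p'})r^{\alpha_0}\mathrm dr\longrightarrow d_{p,\mu}.
\end{equation*}
Since $p>1$ makes $X^{1,p}_\infty$ reflexive, after passing to a subsequence we have $u_n\rightharpoonup u$ weakly in $X^{1,p}_\infty$. Proposition~\ref{ip23} then yields $(\rho_0,\nu_0)=(1,d_{p,\mu})$ and $(\rho_\infty,\nu_\infty)=(0,0)$, i.e.\ no mass escapes to infinity and the sequence does not vanish, and Proposition~\ref{propi25} upgrades this to
\begin{equation*}
d_{p,\mu}=\int_0^\infty\exp_p(\mu|u|^{p'})r^{\alpha_0}\mathrm dr.
\end{equation*}

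It remains only to verify that the weak limit $u$ is admissible. Because $\exp_p(0)=0$ while $d_{p,\mu}>0$, the identity above forces $u\not\equiv0$. Weak lower semicontinuity of the norm gives $\|u\|_{X^{1,p}_\infty}\leq1$; suppose, for contradiction, that $c:=\|u\|_{X^{1,p}_\infty}<1$. Then $w:=u/c$ is admissible, and since $c<1$ with $u\not\equiv0$ we have $|w|>|u|$ on a set of positive measure, so the strict monotonicity of $\exp_p$ on $(0,\infty)$ (immediate from its power series with positive coefficients) gives
\begin{equation*}
\int_0^\infty\exp_p(\mu|w|^{p'})r^{\alpha_0}\mathrm dr>\int_0^\infty\exp_p(\mu|u|^{p'})r^{\alpha_0}\mathrm dr=d_{p,\mu},
\end{equation*}
the left-hand side being finite by Theorem~\ref{theo0}(a). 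This contradicts the definition of $d_{p,\mu}$ as the supremum over unit-norm functions, so $\|u\|_{X^{1,p}_\infty}=1$ and $u$ attains $d_{p,\mu}$.

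The real content lies in Propositions~\ref{ip22}, \ref{ip23} and \ref{propi25}; the present argument only assembles them. The decisive role of the hypothesis $p>2$ (respectively $p=2$ with $\mu>2/B_{2,\alpha_0}$) is to force the strict inequality $d_{p,\mu}>\mu^{p-1}/\Gamma(p)$, which is precisely what excludes the vanishing alternative in Proposition~\ref{ip23}, given that the normalized vanishing limit equals $\mu^{p-1}/\Gamma(p)$ by Proposition~\ref{ip21}. Consequently, the only delicate point in this final step is the rescaling: one must confirm both that $w=u/c$ is genuinely admissible and that its energy is finite, and both follow immediately from $\mu<\alpha_0+1$ together with Theorem~\ref{theo0}.
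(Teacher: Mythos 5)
Your proposal is correct and follows essentially the same route as the paper: invoke Proposition~\ref{propi25} to identify the weak limit $u$ of a maximizing sequence with $d_{p,\mu}=\int_0^\infty\exp_p(\mu|u|^{p'})r^{\alpha_0}\,\mathrm dr$, then rule out $\|u\|_{X^{1,p}_\infty}<1$ by rescaling. The only cosmetic difference is that you conclude via strict monotonicity of $\exp_p$ while the paper uses the quantitative bound $d_{p,\mu}\geq d_{p,\mu}/\|u\|^p_{X^{1,p}_\infty}$; both hinge on $d_{p,\mu}>\mu^{p-1}/\Gamma(p)>0$ from Proposition~\ref{ip22}, which you correctly flag.
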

\begin{proof}
By Proposition \ref{propi25}, there exists $u\in X^{1,p}_\infty(\alpha_0,p-1)$ such that
\begin{equation*}
d_{p,\mu}=\int_0^\infty\exp_p\left(\mu|u|^{p}\right)r^{\alpha_0}\mathrm dr.
\end{equation*}
We are left to check that $\|u\|_{X^{1,p}_\infty}=1$. Since $\|u\|_{X^{1,p}_\infty}\leq\liminf\|u_n\|_{X^{1,p}_\infty}=1$, we have
\begin{align*}
d_{p,\mu}&\geq\int_0^\infty\exp_p\left[\alpha\left(\dfrac{u}{\|u\|_{X^{1,p}_\infty}}\right)^{p'}\right]r^{\alpha_0}\mathrm dr=\sum_{j=0}^\infty\dfrac{\mu^{p-1+j}}{\Gamma(p+j)}\dfrac{\|u\|^{p'(p-1+j)}_{L^{p'(p-1+j)}_{\alpha_0}}}{\|u\|^{p'(p-1+j)}_{X^{1,p}_\infty}}\\
&\geq\dfrac1{\|u\|_{X^{1,p}_\infty}^p}\sum_{j=0}^\infty\dfrac{\mu^{p-1+j}}{\Gamma(p+j)}\|u\|_{L^{p'(p-1+j)}_{\alpha_0}}^{p'(p-1+j)}=\dfrac{d_{p,\mu}}{\|u\|^p_{X^{1,p}_\infty}}.
\end{align*}
Therefore $\|u\|_{X^{1,p}_\infty}=1$.
\end{proof}

\section{Proof of Theorem \ref{theo15}}\label{sec4}

The next theorem is a relatively straightforward generalization of \cite[Theorem 1.4]{MR3209335}.

\begin{theo}\label{theoa}
Let $X^{1,p}_{\infty}(\alpha_0,p-1)$ with $\alpha_0>-1$ and $p>1$. Then for each $\mu\in(0,\alpha_0+1)$ there exists $C>0$ depending only on $\mu,$ $p$ and $\alpha_0$ such that
\begin{equation*}
\int_0^\infty\exp_p\left(\mu\left|\dfrac{u(r)}{\|u'\|_{L^p_{p-1}}}\right|^{p'}\right)r^{\alpha_0}\mathrm dr\leq C\left(\dfrac{\|u\|_{L^p_{\alpha_0}}}{\|u'\|_{L^p_{p-1}}}\right)^p
\end{equation*}
for all $u\in X^{1,p}_\infty\backslash\{0\}$.
\end{theo}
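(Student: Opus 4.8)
The plan is to combine the homogeneity of the inequality with an Adachi--Tanaka type splitting of $(0,\infty)$ according to the size of $u$. Both sides are invariant under $u\mapsto\lambda u$, so I may normalize $\|u'\|_{L^p_{p-1}}=1$ and reduce the claim to showing
\[
\int_0^\infty\exp_p(\mu|u|^{p'})r^{\alpha_0}\,\mathrm dr\le C\,b^p,\qquad b:=\|u\|_{L^p_{\alpha_0}},\ C=C(\mu,p,\alpha_0).
\]
Fixing a threshold $M>0$, I would split $(0,\infty)=A\cup B$, where $A=\{r:|u(r)|\le M\}$ and $B=\{r:|u(r)|>M\}$, and estimate the two regions by completely different means: a power bound on $A$ and a genuine exponential (critical Trudinger--Moser) bound on $B$.

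On $A$ the key observation is that $t\mapsto\exp_p(t)/t^{p-1}=\sum_{j\ge0}t^{j}/\Gamma(p+j)$ is increasing, so that $\exp_p(t)\le\exp_p(T)\,T^{1-p}t^{p-1}$ for $0\le t\le T:=\mu M^{p'}$. Taking $t=\mu|u|^{p'}$ and using $p'(p-1)=p$ gives, on $A$, the pointwise bound $\exp_p(\mu|u|^{p'})\le\exp_p(\mu M^{p'})M^{-p}|u|^{p}$, whence $\int_A\exp_p(\mu|u|^{p'})r^{\alpha_0}\,\mathrm dr\le\exp_p(\mu M^{p'})M^{-p}\,b^p$. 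This is the routine region.

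The delicate region is $B$, where $|u|$ is large and $\exp_p$ grows like $\mathrm e^{\mu|u|^{p'}}$, so the pointwise decay of the Radial Lemma alone cannot control it. Two facts are decisive. First, Lemma \ref{lemma53} with $\alpha_1=p-1$ gives $|u(r)|\le p^{1/p}b^{(p-1)/p}r^{-(\alpha_0+1)(p-1)/p^2}$, so $B\subseteq(0,r_*)$ with $r_*^{\alpha_0+1}=C_M\,b^p$; in particular $\int_B r^{\alpha_0}\,\mathrm dr\le\int_0^{r_*}r^{\alpha_0}\,\mathrm dr=\tfrac{C_M}{\alpha_0+1}b^p$ is \emph{already} of the target order $b^p$, which is what makes the bad set small enough. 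Second, the strict subcriticality $\mu<\alpha_0+1$ provides room to apply the sharp finite-interval inequality. To exploit this I set $\bar u:=(|u|-M)_+$, which satisfies $\bar u\in X^{1,p}_{0,r_*}$, $\bar u(r_*)=0$ and $\|\bar u'\|_{L^p_{p-1}}\le\|u'\|_{L^p_{p-1}}=1$. Using $\exp_p\le\mathrm e^{(\cdot)}$ (Lemma \ref{lemmaphiN}) together with the elementary inequality $(\bar u+M)^{p'}\le(1+\varepsilon)\bar u^{p'}+C_\varepsilon M^{p'}$ (valid since $p'>1$), I obtain on $B$ that $\exp_p(\mu|u|^{p'})\le\mathrm e^{\mu C_\varepsilon M^{p'}}\mathrm e^{\nu\bar u^{p'}}$ with $\nu:=\mu(1+\varepsilon)$. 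Choosing $\varepsilon>0$ so small that $\nu<\alpha_0+1$, the critical finite-interval Trudinger--Moser inequality (cf.\ Theorem \ref{theo01} with $k=1$ and \cite{MR3209335}), whose supremum over $\{\|w'\|_{L^p_{p-1}}\le1,\ w(R)=0\}$ scales like $R^{\alpha_0+1}$ by $w(r)\mapsto w(Rr)$, yields $\int_0^{r_*}\mathrm e^{\nu\bar u^{p'}}r^{\alpha_0}\,\mathrm dr\le C_\nu\,r_*^{\alpha_0+1}=C_\nu C_M\,b^p$. Hence $\int_B\exp_p(\mu|u|^{p'})r^{\alpha_0}\,\mathrm dr\le C_B\,b^p$, and adding the two regions proves the theorem.

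The main obstacle is region $B$: controlling the genuinely exponential growth of $\exp_p$ where $u$ is large. The crux is the matching of orders, namely that the weighted measure $\int_0^{r_*}r^{\alpha_0}\,\mathrm dr$ of the bad set is \emph{exactly} of order $\|u\|_{L^p_{\alpha_0}}^p$ (forced by the homogeneity of the Radial Lemma), combined with spending the strict gap $\alpha_0+1-\mu>0$ through the truncation $\bar u=(|u|-M)_+$ so that only a subcritical exponent $\nu<\alpha_0+1$ reaches the sharp Trudinger--Moser inequality. In writing this up carefully one must verify the $R$-homogeneity (degree $\alpha_0+1$) of the finite-interval critical constant---which is precisely what matches $r_*^{\alpha_0+1}\sim b^p$---and check that $M$ and $\varepsilon$ can be fixed depending only on $\mu,p,\alpha_0$.
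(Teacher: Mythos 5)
Your proof is correct, but it runs along a genuinely different track from the paper's. The paper first performs the logarithmic change of variables $r=\mathrm e^{-t/(\alpha_0+1)}$, $w(t)=(\alpha_0+1)^{1/p'}u(r)$, reducing the claim to a one-dimensional inequality with weight $\mathrm e^{-t}$ on the whole line; it then splits into $\{|w|\le 1\}\cup\{|w|>1\}$ and, on the large set, decomposes it into intervals $(a_i,b_i)$ with $|w(a_i)|=1$, uses $|w(t)|\le 1+(t-a_i)^{1/p'}$ (H\"older against $\int|w'|^p\,\mathrm dt=1$) plus the same $\varepsilon$-Young inequality \eqref{provisorio} that you use, and compares directly with $\int_{a_i}^{b_i}|w|^p\mathrm e^{-t}\,\mathrm dt\ge\mathrm e^{-a_i}-\mathrm e^{-b_i}$ --- in effect re-proving the finite-interval subcritical bound by hand rather than quoting it. You instead stay in the $r$ variable, use the Radial Lemma to confine $\{|u|>M\}$ to $(0,r_*)$ with $r_*^{\alpha_0+1}=C_Mb^p$ (the order-matching observation, which is the real point of your argument), truncate, and invoke the finite-interval Trudinger--Moser inequality with its $R^{\alpha_0+1}$ scaling. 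Both are Adachi--Tanaka-type arguments; the paper's is self-contained and avoids any appeal to the bounded-domain theory, while yours outsources the bad set to a known result but is arguably more transparent about why the right-hand side has to be $\|u\|_{L^p_{\alpha_0}}^p$. Two small points to tighten. First, Theorem \ref{theo01} is stated for the full norm $\|u\|_{X^{k,p}_R}\le1$ and without boundary conditions, so it does not literally give the supremum over $\{\|w'\|_{L^p_{p-1}}\le1,\ w(R)=0\}$ that you need; cite \cite{MR3209335} for that, or simply observe that for subcritical $\nu<\alpha_0+1$ the bound follows from the elementary pointwise estimate $|\bar u(r)|\le(\log(r_*/r))^{1/p'}$ together with $\int_0^{r_*}(r_*/r)^{\nu}r^{\alpha_0}\,\mathrm dr=r_*^{\alpha_0+1}/(\alpha_0+1-\nu)$, which keeps your proof as elementary as the paper's. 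Second, record explicitly that $|u(r_*)|\le M$ by the very definition of $r_*$ (so $\bar u$ genuinely vanishes at and beyond $r_*$) and that $|\bar u'|\le|u'|\chi_{\{|u|>M\}}$ a.e., which gives $\|\bar u'\|_{L^p_{p-1}}\le1$.
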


\begin{proof}
Let $u\in X^{1,p}_\infty(\alpha_0,p-1)$ with $\|u'\|_{L^p_{p-1}}=1$. Set $r=\mathrm{e}^{-\frac{t}{\alpha_0+1}}$ and $w\colon(-\infty,\infty)\to\mathbb R$ given by $w(t)=(\alpha_0+1)^{\frac{p-1}p}u(r)$. By Lemma \ref{lemma53}, $\lim_{r\to\infty}u(r)=\lim_{t\to-\infty}w(t)=0$. Note that
\begin{equation*}
\int_0^\infty|u(r)|^pr^{\alpha_0}\mathrm dr=\dfrac1{(\alpha_0+1)^p}\int_{-\infty}^\infty|w(t)|^p\mathrm{e}^{-t}\mathrm dt,
\end{equation*}
\begin{equation*}
\int_0^\infty|u'(r)|^pr^{p-1}\mathrm dr=\int_{-\infty}^\infty|w'(t)|^p\mathrm dt
\end{equation*}
and
\begin{equation*}
\int_0^\infty\exp_p\left(\mu|u(r)|^{p'}\right)r^{\alpha_0}\mathrm dr=\dfrac1{\alpha_0+1}\int_{-\infty}^\infty\exp_p\left(\dfrac{\mu}{\alpha_0+1}|w(t)|^{p'}\right)\mathrm{e}^{-t}\mathrm dt.
\end{equation*}
It is sufficient to show that for each $\beta\in(0,1)$ there exists $C_\beta>0$ such that
\begin{equation}\label{ew2}
\int_{-\infty}^\infty\exp_p\left(\beta|w(t)|^{p'}\right)\mathrm{e}^{-t}\mathrm dt\leq C_\beta\int_{-\infty}^\infty|w(t)|^p\mathrm{e}^{-t}\mathrm dt,
\end{equation}
for all $C^1$ functions $w(t)$ satisfying $\lim_{t\to-\infty}w(t)=0$ and $\int_{-\infty}^\infty|w'(t)|^p\mathrm dt=1$.

Consider
\begin{equation*}
A=\{t\in\mathbb R\colon |w(t)|\leq 1\}\mbox{ and }B=\{t\in\mathbb R\colon|w(t)|>1\}.
\end{equation*}
This gives $A\cup B=\mathbb R$ and $A\cap B=\varnothing$. Since $\exp_p(t)\leq C_pt^{p-1}$ for all $t\in[0,1]$, for some $C_p>0$, we have
\begin{equation}\label{ew222}
  \int_A\exp_p\left(\beta|w(t)|^{p'}\right)\mathrm{e}^{-t}\mathrm dt\leq C_p\int_A|w(t)|^p\mathrm{e}^{-t}\mathrm dt.
\end{equation}
By $\lim_{t\to-\infty}w(t)=0$, the set $B$ can be write as $B=\cup_{i\in I}(a_i,b_i)$ with $w(a_i)=1$ $\forall i\in I$, where $I=\mathbb N$ or $I=\{1,\ldots,n\}$ with $b_n=\infty$. Then, for each $i\in I$,
\begin{equation*}
|w(t)|\leq |w(a_i)|+\int_{a_i}^t|w'(s)|\mathrm ds\leq 1+(t-a_i)^{\frac{p-1}p}\quad\forall t\in(a_i,b_i).
\end{equation*}
We recall that
\begin{equation}\label{provisorio}
(a+b)^q\leq (1+\varepsilon)a^q+C_q\left(1+\dfrac1{\varepsilon^{q-1}}\right)b^q,\quad\forall a,b\geq0 \mbox{ and }\varepsilon>0,
\end{equation}
where $C_q>0$ depends only on $q\geq1$. Fixed $\varepsilon>0$ with $\beta(1+\varepsilon)<1$, \eqref{provisorio} guarantees the existence of $C_{\varepsilon,p}>0$ such that
\begin{equation*}
|w(t)|^{p'}\leq(1+\varepsilon)(t-a_i)+C_{\varepsilon,p},\quad\forall t\in (a_i,b_i).
\end{equation*}
Using that
\begin{equation*}
\mathrm{e}^{-a_i}\left(1-\mathrm{e}^{[\beta(1+\varepsilon)-1](b_i-a_i)}\right)=\left(\mathrm{e}^{-a_i}-\mathrm{e}^{-b_i}\right)f(b_i-a_i),
\end{equation*}
where $f(t)=\frac{\mathrm{e}^t-\mathrm{e}^{\beta(1+\varepsilon)t}}{\mathrm{e}^t-1}$ for $t\in\mathbb R$ and $f(\infty)=1$, we have, by Lemma \ref{lemmaphiN},
\begin{align*}
\int_{a_i}^{b_i}\exp_p\left(\beta|w(t)|^{p'}\right)\mathrm{e}^{-t}\mathrm dt&\leq\int_{a_i}^{b_i}\mathrm{e}^{\beta|w(t)|^{p'}-t}\mathrm dt\\
&\leq\int_{a_i}^{b_i}\exp\left[(\beta(1+\varepsilon)-1)(t-a_i)+\beta C_{\varepsilon,p}-a_i\right]\mathrm dt\\
&=\dfrac{\mathrm{e}^{\beta C_{\varepsilon,p}}}{1-\beta(1+\varepsilon)}\left(\mathrm{e}^{-a_i}-\mathrm{e}^{-b_i}\right)f(b_i-a_i).
\end{align*}
Since $f(t)\leq1$ for all $t\in\mathbb R$, we get
\begin{equation*}
\int_{a_i}^{b_i}\exp_p\left(\beta|w(t)|^{p'}\right)\mathrm{e}^{-t}\mathrm dt\leq \dfrac{\mathrm{e}^{\beta C_{\varepsilon,p}}}{1-\beta(1+\varepsilon)}\left(\mathrm{e}^{-a_i}-\mathrm{e}^{-b_i}\right).
\end{equation*}
By $|w|>1$ on $B$, we have $\int_{a_i}^{b_i}|w(t)|^p\mathrm{e}^{-t}\mathrm dt\geq\int_{a_i}^{b_i}\mathrm{e}^{-t}\mathrm dt=\mathrm{e}^{-a_i}-\mathrm{e}^{-b_i}$ and
\begin{equation*}
\sum_{i\in I}\int_{a_i}^{b_i}\exp_p\left(\beta|w(t)|^{p'}\right)\mathrm{e}^{-t}\mathrm dt\leq\dfrac{\mathrm{e}^{\beta C_{\varepsilon,p}}}{1-\beta(1+\varepsilon)}\sum_{i\in I}\int_{a_i}^{b_i}|w(t)|^p\mathrm{e}^{-t}\mathrm dt.
\end{equation*}
Therefore,
\begin{equation}\label{ew22}
  \int_B\exp_p\left(\beta|w(t)|^{p'}\right)\mathrm{e}^{-t}\mathrm dt\leq\dfrac{\mathrm{e}^{\beta C_{\varepsilon,p}}}{1-\beta(1+\varepsilon)}\int_B|w(t)|^p\mathrm{e}^{-t}\mathrm dt,
\end{equation}
where we used the monotone convergence Theorem when $I=\mathbb N$. Taking $C_\beta=\max\{C_p,\mathrm{e}^{\beta C_{\varepsilon,p}}/[1-\beta(1+\varepsilon)]\}$ we have, by \eqref{ew222} and \eqref{ew22},
\begin{equation*}
\int_{-\infty}^\infty\exp_p\left(\beta|w(t)|^{p'}\right)\mathrm{e}^{-t}\mathrm dt\leq C_\beta\int_{-\infty}^\infty|w(t)|^p\mathrm{e}^{-t}\mathrm dt.
\end{equation*}
This gives \eqref{ew2}, which completes the proof of the theorem.
\end{proof}
\begin{cor}\label{corgn}
Let $X^{1,p}_\infty(\alpha_0,p-1)$ with $\alpha_0>-1$ and $p>1$. Then for each $\mu\in(0,\alpha_0+1)$ there exists $C>0$ depending only on $\mu$, $p$ and $\alpha_0$ such that for all $u\in X^{1,p}_\infty$ and $q\geq p$ we have
\begin{equation*}
\|u\|^q_{L^q_{\alpha_0}}\leq \dfrac{C\Gamma(\frac{q}{p'}+1)}{\mu^{\frac{q}{p'}}}\|u'\|^{q-p}_{L^p_{p-1}}\|u\|^p_{L^p_{\alpha_0}}.
\end{equation*}

\end{cor}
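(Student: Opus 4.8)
The plan is to extract a single ``fractional power'' term from the series defining $\exp_p$ and then feed the resulting lower bound into Theorem \ref{theoa}. Fix $u\in X^{1,p}_\infty\setminus\{0\}$ (the case $u\equiv 0$ is trivial) and set $N=\|u'\|_{L^p_{p-1}}$, which is positive since a nonzero constant cannot lie in $L^p_{\alpha_0}(0,\infty)$. Writing $s=q/p'$, I would apply the pointwise bound $\exp_p(t)\geq t^{s}/\Gamma(s+1)$ at $t=\mu|u/N|^{p'}$. Because $(\mu|u/N|^{p'})^{s}=\mu^{q/p'}|u|^{q}/N^{q}$, integrating against $r^{\alpha_0}\,\mathrm dr$ gives
\begin{equation*}
\frac{\mu^{q/p'}}{\Gamma(q/p'+1)\,N^{q}}\,\|u\|_{L^q_{\alpha_0}}^{q}\leq\int_0^\infty\exp_p\!\left(\mu\left|\frac{u}{N}\right|^{p'}\right)r^{\alpha_0}\,\mathrm dr\leq C\,\frac{\|u\|_{L^p_{\alpha_0}}^{p}}{N^{p}},
\end{equation*}
the last step being Theorem \ref{theoa}. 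Rearranging yields precisely the claim, with the same constant $C$ and the explicit factor $\Gamma(q/p'+1)/\mu^{q/p'}$; note that $q\geq p$ is exactly $s=q/p'\geq p-1$, so the exponent stays in the admissible range.

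The whole argument therefore rests on the single pointwise inequality $\exp_p(t)\geq t^{s}/\Gamma(s+1)$ for $t\geq0$ and $s\geq p-1$. When $s=p-1+j$ with $j\in\mathbb N\cup\{0\}$ this is immediate, since $t^{s}/\Gamma(s+1)=t^{p-1+j}/\Gamma(p+j)$ is one of the nonnegative terms of $\exp_p(t)=\sum_{j\geq0}t^{p-1+j}/\Gamma(p+j)$. The only real content is the non-integer offset, where the power is not literally a term of the series.

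For that case I would compare the fractional power with the two neighbouring series terms: writing $s=a-1+\delta$ with $a=p+j\geq p$ and $\delta\in(0,1)$, it suffices to show
\begin{equation*}
\frac{t^{a-1+\delta}}{\Gamma(a+\delta)}\leq\frac{t^{a-1}}{\Gamma(a)}+\frac{t^{a}}{\Gamma(a+1)},
\end{equation*}
equivalently $\frac{\Gamma(a)}{\Gamma(a+\delta)}\,t^{\delta}\leq 1+t/a$ for all $t\geq0$. Here the right-hand side is affine and the left-hand side is concave in $t$, so the difference is convex and it is enough to verify nonnegativity at its unique critical point; a short computation shows this collapses to the Gamma estimate
\begin{equation*}
\Gamma(a+\delta)\geq\Gamma(a)\,(a\delta)^{\delta}(1-\delta)^{1-\delta},
\end{equation*}
which is sharp in the limits $\delta\to0^{+}$ and $\delta\to1^{-}$.

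I expect this last inequality to be the main obstacle, since it is the only genuinely analytic point and it cannot be read off from log-convexity of $\Gamma$, which gives the opposite bound $\Gamma(a+\delta)\leq\Gamma(a)^{1-\delta}\Gamma(a+1)^{\delta}$. I would prove it by a direct convexity/Young argument for $\delta\mapsto\log\Gamma(a+\delta)$. Alternatively, the non-integer case can be avoided entirely: for $q$ between the admissible values $q_j=p'(p-1+j)$ and $q_{j+1}=q_j+p'$ one interpolates by H\"older's inequality from the integer cases, and direct bookkeeping shows the exponents of $\|u'\|_{L^p_{p-1}}$ and $\|u\|_{L^p_{\alpha_0}}$ combine to $q-p$ and $p$ while the powers of $C$ and $\mu$ combine to $1$ and $q/p'$; the remaining product of two Gamma factors is then controlled by $\Gamma(q/p'+1)$ uniformly in $q$ via Stirling's formula, again identifying the Gamma asymptotics as the crux.
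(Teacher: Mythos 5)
Your main reduction is exactly the paper's: bound $\exp_p(t)\geq t^{q/p'}/\Gamma(q/p'+1)$ pointwise, plug $t=\mu|u/\|u'\|_{L^p_{p-1}}|^{p'}$ into Theorem \ref{theoa}, and rearrange. Where you diverge is in how you justify that pointwise inequality, and this is where your write-up has a gap: you reduce the non-integer case to the Gamma estimate $\Gamma(a+\delta)\geq\Gamma(a)(a\delta)^{\delta}(1-\delta)^{1-\delta}$, correctly observe that log-convexity gives the wrong direction, and then leave it as something you ``would prove'' --- so as written the crux is asserted, not established. (The estimate is in fact true for $a\geq1$, e.g.\ it follows from Wendel's inequality $\Gamma(a+\delta)\geq\Gamma(a)\,a^{\delta}\bigl(\tfrac{a}{a+\delta}\bigr)^{1-\delta}$ together with $\delta^{\delta}(1-\delta^2)^{1-\delta}\leq1$, so your route is salvageable.) The detour is also unnecessary: the paper obtains the pointwise bound in one line from Lemma \ref{lemmaphiN}. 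Setting $\widetilde p=q/p'+1$, the hypothesis $q\geq p$ gives $\widetilde p\geq p$, so the monotonicity $\exp_p(t)\geq\exp_{\widetilde p}(t)$ of Lemma \ref{lemmaphiN} applies, and $\exp_{\widetilde p}(t)\geq t^{\widetilde p-1}/\Gamma(\widetilde p)$ is just the first (nonnegative) term of the defining series --- no new Gamma-function inequality and no interpolation between integer offsets is needed. I recommend you replace your final two paragraphs with this appeal to Lemma \ref{lemmaphiN}; the rest of your argument then matches the paper's proof.
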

\begin{proof}
Set $\widetilde p=\frac{q}{p'}+1$ for a given $q\geq p$. By Lemma \ref{lemmaphiN},
\begin{equation*}
\exp_p(t)\geq\exp_{\widetilde p}(t)\geq \dfrac{t^{\widetilde p-1}}{\Gamma(\widetilde p)}, \quad\forall t\in[0,\infty).
\end{equation*}
Applying this in Theorem \ref{theoa} we conclude the proof.
\end{proof}
Now we are ready to provide the proof of Theorem \ref{theo15} which we based in \cite{MR2854113}.
\begin{proof}[Proof of Theorem \ref{theo15}]
Let $\mathcal S:=\{u\in X^{1,p}_\infty\colon\|u\|_{X^{1,p}_\infty}=1\}$. Fixed $v\in \mathcal S$, we shall check that $v$ is not a critical point for $\mu>0$ small. We consider
\begin{equation*}
v_t(r):=t^{\frac1p}v(t^{\frac{1}{\alpha_0+1}}r)
\end{equation*}
where $t>0$ is a parameter. Set $w_t:=v_t/\|v_t\|_{X^{1,p}_\infty}$ and note that $w_t$ is a curve in $\mathcal S$ with $w_1=v$. Therefore, it is sufficient to show that
\begin{equation}
I:=\dfrac{\mathrm d}{\mathrm dt}\left(\int_0^\infty\exp_p\left(\mu|w_t|^{p'}\right)r^{\alpha_0}\mathrm dr\right)\Bigg|_{t=1}\neq0
\end{equation}
for $\mu>0$ small.

By $\|v_t\|_{L^q_{\alpha_0}}^q=t^{\frac{q-p}p}\|v\|_{L^q_{\alpha_0}}^q$ and $\|v_t'\|_{L^p_{p-1}}^p=t\|v'\|_{L^p_{p-1}}^p$, we have
\begin{align*}
\int_0^\infty\exp_p\left(\mu|w_t|^{p'}\right)r^{\alpha_0}\mathrm dr&=\sum_{j=0}^\infty\dfrac{\mu^{p-1+j}}{\Gamma(p+j)}\dfrac{\|v_t\|^{p'(p-1+j)}_{L^{p'(p-1+j)}_{\alpha_0}}}{\|v_t\|_{X^{1,p}_\infty}^{p'(p-1+j)}}\\
&=\sum_{j=0}^\infty\dfrac{\mu^{p-1+j}}{\Gamma(p+j)}\dfrac{t^{\frac{j}{p-1}}\|v\|^{p'(p-1+j)}_{L^{p'(p-1+j)}_{\alpha_0}}}{\left(\|v\|_{L^p_{\alpha_0}}^p+t\|v'\|^p_{L^p_{p-1}}\right)^{\frac{p-1+j}{p-1}}}.
\end{align*}
Thus,
\begin{align}
I&=\sum_{j=0}^\infty\dfrac{\mu^{p-1+j}}{\Gamma(p+j)}\|v\|^{p'(p-1+j)}_{L^{p'(p-1+j)}_{\alpha_0}}\left(\dfrac{j}{p-1}-\dfrac{p-1+j}{p-1}\|v'\|^p_{L^p_{p-1}}\right)\nonumber\\
&\leq-\dfrac{\mu^{p-1}}{\Gamma(p)}\|v\|^p_{L^p_{\alpha_0}}\|v'\|^p_{L^p_{p-1}}+\sum_{j=1}^\infty\dfrac{j\mu^{p-1+j}}{(p-1)\Gamma(p+j)}\|v\|^{p'(p-1+j)}_{L^{p'(p-1+j)}_{\alpha_0}}\nonumber\\
&=\dfrac{\mu^{p-1}}{\Gamma(p)}\|v\|^p_{L^p_{\alpha_0}}\|v'\|^p_{L^p_{p-1}}\left(-1+\sum_{j=1}^\infty\dfrac{j\mu^{j}\Gamma(p-1)}{\Gamma(p+j)}\dfrac{\|v\|^{p'(p-1+j)}_{L^{p'(p-1+j)}_{\alpha_0}}}{\|v\|^p_{L^p_{\alpha_0}}\|v'\|^p_{L^p_{p-1}}}\right).\label{eqw1}
\end{align}
By Corollary \ref{corgn}, there exists $C_{\alpha_0,p}>0$ depending only on $\alpha_0$ and $p$ such that
\begin{equation*}
\|v\|^{p'(p-1+j)}_{L^{p'(p-1+j)}_{\alpha_0}}\leq \dfrac{C_{\alpha_0,p}\Gamma(p+j)}{(\frac{\alpha_0+1}{2})^{p-1+j}}\|v'\|^{p'j}_{L^p_{p-1}}\|v\|^p_{L^p_{\alpha_0}}.
\end{equation*}
We conclude from $p\leq2$ and $\|v\|_{X^{1,p}_\infty}=1$ that
\begin{equation*}
\dfrac{\|v\|^{p'(p-1+j)}_{L^{p'(p-1+j)}_{\alpha_0}}}{\|v\|^p_{L^p_{\alpha_0}}\|v'\|^p_{L^p_{p-1}}}\leq \dfrac{C_{\alpha_0,p}\Gamma(p+j)}{(\frac{\alpha_0+1}{2})^{p-1+j}}\|v'\|^{p'j-N}_{L^p_{p-1}}\leq\dfrac{C_{\alpha_0,p}\Gamma(p+j)}{(\frac{\alpha_0+1}{2})^{p-1+j}},\quad\forall j\geq1.
\end{equation*}
This and \eqref{eqw1} gives
\begin{equation*}
I\leq\dfrac{\mu^{p-1}}{\Gamma(p)}\|v\|^p_{L^p_{\alpha_0}}\|v'\|^p_{L^p_{p-1}}\left[-1+\dfrac{\mu\Gamma(p-1)C_{\alpha_0,p}}{(\frac{\alpha_0+1}2)^p}\sum_{j=1}^\infty j\left(\dfrac{2\mu}{\alpha_0+1}\right)^{j-1}\right].
\end{equation*}
Taking $\mu<\min\{\frac{\alpha_0+1}3,1/C\}$, where $C=\frac{\Gamma(p-1)C_{\alpha_0,p}}{[(\alpha_0+1)/2]^p}\sum_{j=1}^\infty j(\frac{2}3)^{j-1}$, we have
\begin{equation*}
I<\dfrac{\mu^{p-1}}{\Gamma(p)}\|v\|^p_{L^p_{\alpha_0}}\|v'\|^p_{L^p_{p-1}}\left(-1+\mu C\right)<0,
\end{equation*}
which completes the proof.
\end{proof}

\thebibliography{99}

\bibitem{MR4097244} E. Abreu and L. G. Fernandez Jr, \textit{On a weighted Trudinger-Moser inequality in $\mathbb R^p$}, J. Differ. Equ. \textbf{269} (2020), 3089-3118.

\bibitem{MR1646323} S. Adachi and K. Tanaka, \textit{Trudinger type inequalities in $\mathbb R^N$ and their best exponents}, Proc. Amer. Math. Soc. \textbf{128} (2000), 2051–2057.




\bibitem{MR1163431} D. M. Cao, \textit{Nontrivial solution of semilinear elliptic equation with critical exponent in $\mathbb R^2$}, Comm. Partial Differential Equations \textbf{17} (1992), 407–435.

\bibitem{CLZ-AIM} L. Chen, G. Lu and M. Zhu, \textit{Existence and nonexistence of extremals for critical Adams inequalities in $\mathbb R^4$ and Trudinger-Moser inequalities in $\mathbb R^2$}. Adv. Math. \textbf{368} (2020), 107143, 61 pp.

\bibitem{CLZ-CVPDE} L. Chen, G. Lu and M. Zhu, \textit{Existence of extremals for Trudinger-Moser inequalities involved with a trapping potential}. Calc. Var. Partial Differential Equations \textbf{62} (2023), Paper No. 150.

\bibitem{CohnLu} W. Cohn and G. Lu, \textit{Best constants for Moser-Trudinger inequalities on the Heisenberg group}. Indiana Univ. Math. J. \textbf{50} (2001), 1567-1591.

\bibitem{MR1422009}
P. Cl\'ement, D. G. de Figueiredo and E. Mitidieri,
\textit{Quasilinear elliptic equations with critical exponents},
Topol. Methods Nonlinear Anal. \textbf{7} (1996), 133-170.

\bibitem{MR2772124} D. G. de Figueiredo, J. M. do Ó and B. Ruf, \textit{Elliptic equations and systems with critical Trudinger-Moser nonlinearities}. Discrete Contin. Dyn. Syst. \textbf{30} (2011), 455–476.

\bibitem{MR1865413} D. G. de Figueiredo, J. M. do Ó and B. Ruf, \textit{On an inequality by N. Trudinger and J. Moser and related elliptic equations},
Comm. Pure Appl. Math. \textbf{55} (2002), 135–152.

\bibitem{MR2838041} D. G. de Figueiredo, E. M. dos Santos and O. H. Miyagaki, \textit{Sobolev spaces of symmetric functions and applications}, J. Funct. Anal. \textbf{261} (2011), 3735--3770.

\bibitem{MR3670473} J. F. de Oliveira, \textit{On a class of quasilinear elliptic problems with critical exponential growth on the whole space}, Topol. Methods Nonlinear Anal. \textbf{49} (2017), 529-550.

\bibitem{MR3299177} J. F. de Oliveira, J. M. do Ó, \textit{Concentration-compactness principle and extremal functions for a sharp Trudinger-Moser inequality}, Calc. Var. Partial Differential Equations \textbf{52} (2015), 125–163.

\bibitem{MR1392090} J. M. do Ó, \textit{Semilinear Dirichlet problems for the N-Laplacian in $\mathbb R^N$ with nonlinearities in the critical growth range}, Differential Integral Equations \textbf{9} (1996), 967–979.

\bibitem{MR1704875} J. M. do Ó, \textit{N-Laplacian equations in $\mathbb{R}^N$ with critical growth}, Abstr. Appl. Anal. \textbf{2} (1997), 301–315.

\bibitem{MR3209335} J. M. do \'O and J. F. de Oliveira, \textit{Trudinger-Moser type inequalities for weighted Sobolev spaces involving fractional dimensions}, Proc. Amer. Math. Soc. \textbf{142} (2014), 2813-2828.

\bibitem{MR3575914} J. M. do \'O and J. F. de Oliveira, \textit{Concentration-compactness and extremal problems for a weighted Trudinger-Moser inequality}, Commun. Contemp. Math. \textbf{19} (2017), 19:1650003.

\bibitem{JMBO} J. M. do \'O and J. F. de Oliveira, \textit{Equivalence of critical and subcritical sharp Trudinger-Moser inequalities and existence of extremal function}, arXiv:2108.04977, 2021.

\bibitem{JMBO2} J. M. do \'O and J. F. de Oliveira, \textit{On a sharp inequality of Adimurthi-Druet type and extremal functions}, arXiv:2203.14181, 2022.

\bibitem{MR3957979} J. M. do \'O, J. F. de Oliveira, and P. Ubilla, \textit{Existence for a k-Hessian equation involving supercritical growth}, J. Differential Equations \textbf{267} (2019), 1001–1024.

\bibitem{Paper1} J. M. do \'O, G. Lu, and R. Ponciano, \textit{Sharp Sobolev and Adams-Trudinger-Moser embeddings on weighted Sobolev spaces and their applications}, arXiv:2302.02262, 2023.

\bibitem{MR4112674} J. M. do \'O, A. C. Macedo and J. F. de Oliveira, \textit{A Sharp Adams-type inequality for weighted Sobolev spaces}, Q. J. Math. \textbf{71} (2020), 517--538.

\bibitem{MR2488689} J. M. do Ó, E. Medeiros and U. Severo, \textit{On a quasilinear nonhomogeneous elliptic equation with critical growth in $\mathbb R^N$}, J. Differential Equations \textbf{246} (2009), 1363–1386.

\bibitem{MR2854113} M. Ishiwata, \textit{Existence and nonexistence of maximizers for variational problems associated with Trudinger-Moser type inequalities in $\mathbb R^p$}, Math. Ann. \textbf{351} (2011), 781-804.

\bibitem{MR1929156} J. Jacobsen and K. Schmitt, \textit{The Liouville-Bratu-Gelfand problem for radial operators}, J. Differential Equations \textbf{184} (2002), 283–298.

\bibitem{MR2166492} J. Jacobsen, K. Schmitt, \textit{Radial solutions of quasilinear elliptic differential equations}. Amsterdam: Elsevier/North-Holland; 2004. p. 359–435.

\bibitem{MR1982932} A. Kufner and L. E. Persson \textit{Weighted Inequalities of Hardy Type}, World Scientific Publishing Co., Singapore, 2003.

\bibitem{MR2863858} N. Lam and G. Lu, \textit{Existence and multiplicity of solutions to equations of N-Laplacian type with critical exponential growth in $\mathbb R^N$}, J. Funct. Anal. \textbf{262} (2012), 1132–1165.

\bibitem{MR3053467} N. Lam and G. Lu, \textit{ A new approach to sharp Moser-Trudinger and Adams type inequalities: a rearrangement-free argument}, J. Differential Equations \textbf{255} (2013), 298-325.

\bibitem{LamLu3} N. Lam and G. Lu, \textit{Sharp Moser-Trudinger inequality on the Heisenberg group at the critical case and applications}. Adv. Math. \textbf{231} (2012), 3259-3287.

\bibitem{LamLuTang-NA} N. Lam, G. Lu and H. Tang, \textit{Sharp subcritical Moser-Trudinger inequalities on Heisenberg groups and subelliptic PDEs}. Nonlinear Anal. \textbf{95} (2014), 77–92.

\bibitem{LLZ-AIM} N. Lam, G. Lu and L. Zhang, \textit{Existence and nonexistence of extremal functions for sharp Trudinger-Moser inequalities}. Adv. Math. \textbf{352} (2019), 1253–1298.

    \bibitem{LLZ-ANS} N. Lam, G. Lu and L. Zhang, \textit{Sharp singular Trudinger-Moser inequalities under different norms}. Adv. Nonlinear Stud. \textbf{19} (2019), 239–261.

     \bibitem{LLZ-RMI} N. Lam, G. Lu and L. Zhang,   \textit{Equivalence of critical and subcritical sharp Trudinger-Moser-Adams inequalities}. Rev. Mat. Iberoam. \textbf{33} (2017), 1219–1246.

        \bibitem{LiLuZhu-CVPDE} J. Li, G. Lu and M, Zhu, \textit{Concentration-compactness principle for Trudinger-Moser inequalities on Heisenberg groups and existence of ground state solutions}. Calc. Var. Partial Differential Equations \textbf{57} (2018), 26 pp.

\bibitem{MR2400264} Y. Li and B. Ruf, \textit{A sharp Trudinger-Moser type inequality for unbounded domains in $\mathbb R^n$}, Indiana Univ. Math. J. \textbf{57} (2008), 451–480.

\bibitem{MR0683027} P-L. Lions, \textit{Sym\'etrie et compacit\'e dans les espaces de Sobolev}, Journal of Functional Analysis \textbf{49} (1982), 315-334.

\bibitem{LuZhu-JDE} G. Lu and M. Zhu, \textit{A sharp Trudinger-Moser type inequality involving $L^n$ norm in the entire space $R^n$}. J. Differential Equations \textbf{267} (2019), 3046–3082.

\bibitem{MR0301504} J. Moser, \textit{A sharp form of an inequality by N. Trudinger}, Indiana Univ. Math. J. \textbf{20} (1970/1971), 1077--1092.

\bibitem{MR1377667} R. Panda, \textit{On semilinear Neumann problems with critical growth for the n-Laplacian}, Nonlinear Anal. \textbf{26} (1996), 1347–1366.

\bibitem{MR0221282} J. Peetre, \textit{Espaces d'interpolation et théorème de Soboleff}. (French), Ann. Inst. Fourier \textbf{16} (1966), 279–317.

\bibitem{MR0192184} S. I. Poho\v{z}aev, \textit{On the eigenfunctions of the equation $\Delta u+\lambda f(u)=0$}, (Russian) Dokl. Akad. Nauk SSSR. \textbf{165} (1965), 36-39.

\bibitem{MR1069756} B. Opic and A. Kufner, \textit{Hardy-type Inequalities}, Pitman Research Notes in Mathematics Series 219, Lonngmman Scientific \& Technical, Harlow, 1990.

\bibitem{MR2109256} B. Ruf, \textit{A sharp Trudinger-Moser type inequality for unbounded domains in $\mathbb R^2$}, J. Funct. Anal. \textbf{219} (2005), 340–367.

\bibitem{MR0216286} N. S. Trudinger, \textit{On imbeddings into Orlicz spaces and some applications}, J. Math. Mech. \textbf{17} (1967), 473--483.

\bibitem{MR0691044} M. I. Weinstein, \textit{Nonlinear Schr\"odinger Equations and Sharp Interpolation Estimates}, Commun. Math. Phys. \textbf{87}, (1983), 567-576.

\bibitem{MR0140822} V. I. Yudovich, \textit{Some estimates connected with integral operators and with solutions of elliptic equations}. (Russian)
Dokl. Akad. Nauk SSSR \textbf{138} (1961), 805–808.

\end{document}